\documentclass[pdflatex,sn-mathphys-num]{sn-jnl}

\usepackage{graphicx}%
\usepackage{multirow}%
\usepackage{amsmath,amssymb,amsfonts}%
\usepackage{amsthm}%
\usepackage[title]{appendix}%
\usepackage{xcolor}%
\usepackage{textcomp}%
\usepackage{manyfoot}%
\usepackage{booktabs}%
\usepackage{algorithm}%
\usepackage{algorithmicx}%
\usepackage{algpseudocode}%
\usepackage{listings}%

\usepackage{float}%
\usepackage{microtype}%
\usepackage{tikz}%
\usepackage{subcaption}%
\usepackage{marvosym}%

\usetikzlibrary{shadows.blur, calc}
\usetikzlibrary{shadows,fadings}

\colorlet{colA}{red!75!orange}
\colorlet{colB}{teal!65!cyan}
\colorlet{colC}{blue!60!violet}

\numberwithin{equation}{section}

\theoremstyle{thmstyleone}%
\newtheorem{theorem}{Theorem}[section]%
\newtheorem{proposition}[theorem]{Proposition}%
\newtheorem{lemma}[theorem]{Lemma}%
\newtheorem{corollary}[theorem]{Corollary}%

\theoremstyle{thmstyletwo}%
\newtheorem{example}{Example}[section]%
\newtheorem{remark}{Remark}[section]%

\theoremstyle{thmstylethree}%
\newtheorem{definition}{Definition}[section]%

\theoremstyle{thmstylethree}%

\begin{document}

\title[Structured Spectral Step-Sizes and Hanoi-Type Ordering for Gradient Methods]
{Structured Spectral Step-Sizes and Hanoi-Type Ordering for Gradient Methods}

\author[1]{\fnm{Yijia} \sur{Zhou}}\email{yijiazhou2@outlook.com}

\author*[1]{\fnm{Ran} \sur{Gu}}\email{rgu@nankai.edu.cn}

\affil[1]{\orgdiv{NITFID, School of Statistics and Data Science, LPMC, KLMDASR, and AAIS}, \orgname{Nankai University}, \orgaddress{\city{Tianjin}, \country{China}}}

\abstract{Gradient methods are widely used for optimization, yet their
practical convergence performance depends critically on step-size selection.
Spectral step-size selection involves two
interrelated questions: how to construct reliable candidates from
iteration history, and how to order them during the iteration.  This
paper addresses both from a spectral viewpoint. For candidate construction, we develop a unified framework relating
Huang--Dai--Liu (HDL) determinant pencils, limited-memory steepest
descent (LMSD)-type Krylov--Ritz extraction with pseudo-memory
realization, and Gu--Du moment recovery with component-energy weights.  The framework shows that generalized-eigenvalue recoverability, bounded-window implementability, and
energy interpretability coincide in a positive rank-one finite-moment
regime, providing a structural criterion that explains why LMSD-type
Ritz values form a natural low-memory candidate pool. For the complementary ordering question, we identify a rebound
phenomenon in which low-frequency steps amplify higher-frequency
components, leading to a recursive high-frequency-first ordering
formulated as a Hanoi-type principle and extended to memory-m
settings as a controlled scheduling rule. Based on these insights, we propose an adaptive gradient method with
component-energy selection and phase-length control, prove global R-linear convergence for strictly convex quadratic objectives under the proposed admissibility filter and demonstrate competitive performance on ill-conditioned problems.
}

\keywords{Gradient methods, Structured spectral step-sizes, Unconstrained Quadratic Optimization, Hanoi-type ordering, R-linear convergence}
          
\pacs[MSC Classification]{90C20, 65K05, 90C06}

\maketitle

\section{Introduction}
\label{sec:introduction}

\subsection{Setup}
\label{subsec:intro-motivation}

Gradient methods remain among the most widely used first-order methods
for optimization.  Their practical efficiency, however,
depends strongly on the choice of step-size.  A poorly chosen step-size
may lead to slow convergence or persistent oscillation, whereas a
well-designed one can substantially accelerate the iteration.  Step-size
selection is therefore a central issue in the design and analysis of
gradient methods.

A natural setting for studying this question is the strictly convex
quadratic problem
\begin{equation}
\label{eq:intro-quadratic}
    \min_{x\in\mathbb{R}^n} \; f(x)=\frac12 x^\top A x-b^\top x,
    \qquad A=A^\top\succ 0.
\end{equation}
This model captures the local behavior of smooth strongly convex
objectives near a minimizer, where the Hessian is represented by a
positive definite matrix.  For the gradient iteration
\(x^{k+1}=x^k-\alpha_k g^k\), with \(g^k=Ax^k-b\), the gradients satisfy
\begin{equation}
\label{eq:intro-gradient-dynamics}
    g^{k+1}=(I-\alpha_k A)g^k.
\end{equation}
Thus, for quadratic objectives, the effect of a step-size sequence is
encoded by scalar polynomials in \(A\) acting on the gradient. 

We focus on the strictly convex quadratic model because it is the canonical
setting for studying spectral step-size mechanisms. The explicit eigencomponent
dynamics allow us to isolate both the finite-history construction of spectral
candidates and the rebound effect governing their ordering. This avoids
additional globalization choices, such as nonmonotone line-search parameters,
that would otherwise obscure the contribution of the step-size rule itself.

This setting is chosen for analytical clarity rather than algorithmic limitation. All spectral step-size rules considered in this paper are constructed solely from gradient history and scalar inner products, without requiring explicit Hessian information or the quadratic form itself. They can therefore be applied directly to general smooth unconstrained optimization. For nonquadratic objectives, global convergence typically requires coupling with a suitable line search or other globalization strategy, as is standard in the spectral gradient literature.

\subsection{Spectral step-sizes and related work}
\label{subsec:intro-related-work}

Existing step-size strategies can be viewed from two complementary
perspectives.  The first concerns how spectral candidates are extracted
from finite iteration history; the second concerns how these candidates
are ordered during the iteration.

\textit{Construction.}
The classical steepest descent~(SD) step-size
\cite{cauchy1847methode} and the two-point step-sizes of Barzilai and
Borwein~\cite{barzilai1988two}, hereafter BB1 and BB2, already have a
simple spectral interpretation.  Let \(A=Q\Lambda Q^\top\), where
\(\Lambda=\operatorname{diag}(\lambda_1,\dots,\lambda_n)\) and
\(0<\lambda_1\leq\cdots\leq\lambda_n\).  Setting \(d^k=Q^\top g^k\),
the recurrence \eqref{eq:intro-gradient-dynamics} decouples as
\begin{equation}
\label{eq:intro-component-dynamics}
    d_i^{k+1}=(1-\alpha_k\lambda_i)d_i^k,
    \qquad i=1,\dots,n.
\end{equation}
Hence the reciprocal step-size \(\theta_k=\alpha_k^{-1}\) targets the
spectral scale \(\lambda_i\) when \(\theta_k\approx\lambda_i\).  The
SD, BB1, and BB2 reciprocal step-sizes are weighted averages of the
eigenvalues:
\begin{equation}
\label{eq:intro-sd-bb-weighted-average}
\bigl(\alpha_{\mathrm{SD}}^k\bigr)^{-1}
    = \frac{\sum_i \lambda_i (d_i^k)^2}{\sum_i(d_i^k)^2},
\bigl(\alpha_{\mathrm{BB1}}^k\bigr)^{-1}
    = \frac{\sum_i \lambda_i (d_i^{k-1})^2}{\sum_i (d_i^{k-1})^2},
\bigl(\alpha_{\mathrm{BB2}}^k\bigr)^{-1}
    = \frac{\sum_i \lambda_i^2 (d_i^{k-1})^2}
           {\sum_i \lambda_i (d_i^{k-1})^2}.
\end{equation}
The weights are determined by the gradient history, so these rules may
be regarded as finite-history probes of the spectral measure induced by
the current error.

This viewpoint also underlies several finite-dimensional spectral
constructions.  Yuan's step-size \cite{yuan2006new} gives quadratic
termination in two dimensions, and related finite-termination ideas have
recently been extended to three-dimensional quadratic models
\cite{xie2026new}.  Huang et al.~\cite{huang2021equipping} derived a
two-dimensional quadratic termination step-size through determinant
relations involving filtered gradients.  As observed in
\cite{di2018steplength}, many classical step-size rules admit a natural
spectral interpretation.  Fletcher's limited memory steepest
descent~(LMSD) method \cite{fletcher2012limited} extracts Ritz values
from a short Krylov history and recycles them as step-sizes; its
R-linear convergence was established in \cite{Curtis2017}.  Further
BB-type variants and scalar-criterion step-sizes have been developed to
improve robustness and local acceleration
\cite{dai2019family,sun2020new}.  Although these mechanisms differ
algebraically, they all construct curvature-sensitive step-sizes from
finite iteration history.

\textit{Sequencing.}
The ordering of step-sizes is equally important.  Gradient methods with
retards, in which delayed SD step-sizes are reused, were studied in
\cite{friedlander1998gradient}.  Dai and Yuan
\cite{dai2003alternate} proposed the alternate minimization gradient
method and proved Q-superlinear convergence in two dimensions.  A
long-term observation technique was used in \cite{dai2005analysis} to
explain spectral behaviors of monotone gradient methods.  Adaptive BB
rules were introduced in \cite{frassoldati2008new}, and asymptotically
optimal step-size sequences were studied in
\cite{zhigljavsky2013asymptotically}.  Gonzaga and Schneider
\cite{Gonzaga_2015} analyzed the asymptotic behavior of the Cauchy
algorithm and proposed methods that periodically inject short steps to
break the oscillatory cycle, with further acceleration obtained from
Chebyshev-root step-size schedules.

Several later methods exploit the idea that repeatedly targeting a
spectral scale can selectively reduce the associated gradient component.
The steepest descent with correction~(SDC) method \cite{de2014efficient}
combines SD iterates with a constant step-length computed via the Yuan
formula to foster selective elimination of gradient components along
eigenvector subspaces. Based on LMSD, instead of using all Ritz values, the modified limited memory steepest descent~(MLMSD) method
of \cite{gu2021modified} only used the Ritz value with more accuracy; through an inexact asymptotic analysis, they obtained that
consecutive steps near the same spectral scale widen the gap between
the dominant and subdominant gradient components, enabling superlinear convergence for
$n \leq 2m(m+2-\mathrm{ind})-1$, $\mathrm{ind}\in\{2,\ldots,m+1\}$.
This mechanism is consistent with the superlinear numerical behavior of BB1
($n\leq 3$), CSDS ($n<2m$), and LMSD ($n\leq 3m$)
\cite{dai2005asymptotic,gu2021modified}.  Spectral
properties have also been used to break asymptotic zigzagging patterns
and accelerate the BB method
\cite{huang2022asymptotic,huang2022acceleration}.  More recent cyclic
gradient frameworks and adaptive cyclic strategies with quadratic
interpolation appear in \cite{zhang2024cyclic,xie2025adaptive}.

\textit{Limitations.}
On the
construction side, existing spectral step-size rules are often presented
in different algebraic forms.  What is missing is a criterion that
tells when such a method should be regarded as
structurally reliable.    This issue also obscures the
relationship between the two-dimensional determinant construction of
Huang et al.~\cite{huang2021equipping} and general \(m\)-dimensional LMSD. On the sequencing side, many cyclic or adaptive rules are motivated by
observed spectral behavior, but the mechanism determining which spectral
scale should be revisited, and when, remains implicit.  A key difficulty
is rebound: a step targeting a low-frequency component can reactivate
higher-frequency components that were previously suppressed.  This
effect suggests that step-size ordering is not merely a matter of
cycling through candidates, but a scheduling problem among interacting
spectral scales.

  Hence two questions should be answered: which spectral scales can be reliably
extracted from the history, and in what order they should be revisited
during the iteration.

\subsection{Contributions}
\label{subsec:intro-contributions}

The contributions of this paper follow the construction--ordering
distinction described above.

\begin{enumerate}
\item \textit{Finite-moment spectral framework and structural criterion.}
We extend the step-size constructions of Huang et al.~\cite{huang2021equipping}, LMSD~\cite{fletcher2012limited}, and
MLMSD~\cite{gu2021modified} into a unified finite-moment framework
with four structural families:
\(\mathcal{HDL}_{\mathrm{gen}}\),
\(\mathcal{HDL}_{\mathrm{gen}}^{(1)}\),
\(\mathcal{HDL}_{\mathrm{nar}}\), and
\(\mathcal{GD}_m\).  These classes encode complementary structural
properties: \(\mathcal{HDL}_{\mathrm{gen}}\) provides reduced
determinant-pencil recoverability;
\(\mathcal{HDL}_{\mathrm{gen}}^{(1)}\) captures rank-one
gradient-history correlations through a Gram-type structure;
\(\mathcal{HDL}_{\mathrm{nar}}\) characterizes finite-window
pseudo-memory realizability; and \(\mathcal{GD}_m\) supplies
component-energy weights through \(2m\) moment equations.  By analyzing
the relations among these classes, we identify the positive rank-one
finite-moment regime as the precise overlap in which recoverability,
finite-window implementability, and energy interpretability are
simultaneously available.  This yields a structural criterion for
designing reliable finite-history spectral step-size candidates.

    \item \textit{Rebound-induced ordering.}
We identify rebound dynamics as a mechanism by which a step targeting a low-frequency component reactivates previously suppressed high-frequency components. In a reduced one-target phase model, this forces a recursive high-frequency-first schedule, which we formulate as the Hanoi Ordering Principle. The memory-$m$ extension gives a controlled scheduling rule for multiple Ritz candidates.

    \item \textit{Adaptive pseudo-memory method.}
    Combining the structural and ordering viewpoints, we propose an
    adaptive pseudo-memory Hanoi-type ordering algorithm.  For strictly convex
    quadratic objectives, we prove global R-linear convergence and
numerical experiments demonstrate the potential of the algorithm relative to
existing gradient methods.
\end{enumerate}

\subsection{Organization}
\label{subsec:intro-organization}

The remainder of the paper is organized as follows.
Section~\ref{sec:general-framework} develops the construction theory.
It introduces Huang--Dai--Liu (HDL) determinant pencils and their rank-one and
finite-window subclasses, and then relates them to LMSD-type
Krylov--Ritz extraction, pseudo-memory moment realization, and
Gu--Du moment recovery with component-energy weights.
Section~\ref{sec:hanoi-principle} analyzes rebound dynamics and derives
the Hanoi Ordering Principle, together with its block and memory-\(m\)
extensions.
Section~\ref{sec:algorithm} presents the adaptive Hanoi-type ordering
algorithm.
Section~\ref{sec:convergence_analysis} establishes R-linear convergence
for \eqref{eq:intro-quadratic}.
Section~\ref{sec:numerical-experiments} reports numerical experiments,
and Section~\ref{sec:conclusion} concludes the paper.

\section{Structured Spectral Step Sizes}
\label{sec:general-framework}

We now address the construction question raised in the introduction:
which spectral candidates can be reliably extracted from a finite
gradient history?  We use three requirements to define what it means
for such a candidate to be well structured.  First, the candidate
should be recoverable from a reduced generalized eigenvalue problem, so
that it represents a genuine reduced spectral scale.  Second, the
reduced problem should be realizable from a bounded scalar moment
window, so that it can be implemented with pseudo-memory.  Third, the
construction should provide component-energy weights, which will be
used later for adaptive candidate selection.

The classes introduced below encode these requirements at increasing
levels of structure.  The general HDL class records
determinant-pencil recovery.  Its rank-one subclass
\(\mathcal{HDL}_{\mathrm{gen}}^{(1)}\) imposes a separable filter
structure, which converts the reduced pencil into a Gram-type pencil
built from cross-gradient correlations.  This additional structure
gives the candidates a projection interpretation and forms the bridge
between abstract determinant recovery and finite-moment realizability.
The narrow HDL class records bounded moment-window realizability,
while the Gu--Du class records moment recovery of spectral nodes
together with component-energy weights.

Our goal is not merely taxonomical.  Rather, these structural classes
identify a canonical positive rank-one finite-moment regime in which
the reduced spectral candidates are recoverable from a generalized
eigenvalue problem, realizable from a bounded scalar moment window, and
equipped with positive Gu--Du component-energy weights.  This yields a
verifiable design principle: a new step-size rule is well structured if
its reduced pencil belongs to this regime, so that pseudo-memory
implementation and energy interpretation need not be re-derived from
scratch.  The LMSD family exemplifies this regime, and its Ritz values
therefore serve as the natural candidate pool for the adaptive method
developed later.

\subsection{General and rank-one HDL classes}
\label{subsec:huang-pencils}

In this subsection, we observe that the two-dimensional step-size
construction of Huang et al.~\cite{huang2021equipping} is equivalent
to a generalized eigenvalue problem.  Exploiting its determinant-zero
structure and rank-one factorization separately, we define two
structural families $\mathcal{HDL}_{\mathrm{gen}}$ and
$\mathcal{HDL}_{\mathrm{gen}}^{(1)}$ that generalize this construction
to $m$ dimensions.

We first revisit the step-size construction of Huang et al.\ \cite{huang2021equipping}.
Let $\nu_1(k)$ and $\nu_2(k)$ be gradient indices satisfying
$\nu_p(k) \leq k-1$ for $p=1,2$, and let
$\psi_1, \psi_2, \psi_3, \psi_4$ be \emph{Laurent-type spectral filters},
 by which we mean rational functions of
$x$ that are well-defined on the spectral set under consideration.
Assume that these filters satisfy
\[
    \psi_1(x)\psi_2(x) = \psi_3(x)\psi_4(x),
    \qquad x \in \mathbb{R}.
\]

The scalar identity of Huang et al.\ \cite{huang2021equipping} has the form
\begin{equation}
\label{eq:huang-cross-product}
\begin{split}
    & \bigl(g^{\nu_1(k)}\bigr)^{\top}
      \psi_1(A)(I - \alpha A)g^k \;
      \bigl(g^{\nu_2(k)}\bigr)^{\top}
      \psi_2(A)(I - \alpha A)g^k \\
    &\quad =
      \bigl(g^{\nu_1(k)}\bigr)^{\top}
      \psi_3(A)(I - \alpha A)g^k \;
      \bigl(g^{\nu_2(k)}\bigr)^{\top}
      \psi_4(A)(I - \alpha A)g^k.
\end{split}
\end{equation}
The scalar form~\eqref{eq:huang-cross-product} is unwieldy and not
directly extensible to higher dimensions.  We next generalize this
step-size construction to dimension $m$ through a determinant-pencil
formulation.

Let
\[
    L^k =
    \begin{bmatrix}
        \psi_1(A)g^{\nu_1(k)} & \psi_3(A)g^{\nu_1(k)}
    \end{bmatrix},
    \qquad
    R^k =
    \begin{bmatrix}
        \psi_4(A)g^k & \psi_2(A)g^k
    \end{bmatrix}.
\]
Since all filters are functions of the symmetric matrix $A$, they commute
with $A$ and with each other.  Consequently,
\eqref{eq:huang-cross-product} is equivalent to
\[
    \det\bigl((L^k)^{\top}(I - \alpha A)R^k\bigr) = 0.
\]
Introducing the reciprocal variable $\theta = \alpha^{-1}$, we obtain the
projected determinant pencil
\begin{equation}
\label{eq:huang-pencil}
    \det\bigl(M_1^k - \theta M_0^k\bigr) = 0,
    \qquad
    M_0^k = (L^k)^{\top} R^k,
    \qquad
    M_1^k = (L^k)^{\top} A R^k.
\end{equation}
The next definition extends this mechanism to an
$m$-dimensional HDL determinant pencil.

\begin{definition}[General HDL class $\mathcal{HDL}_{\mathrm{gen}}$]
\label{def:Hgen}
Let $m \geq 2$.  A step-size construction belongs to the class
$\mathcal{HDL}_{\mathrm{gen}}$ if, at iteration $k$, it is specified by indices $\nu_1(k), \ldots, \nu_m(k)$
satisfying $\nu_p(k) \leq k-1$ for all $p$ and by an $m \times m$
matrix-valued Laurent-type spectral filter
\[
    \Psi(x) = \bigl[\psi_{pq}(x)\bigr]_{p,q=1}^{m}
\]
that is well-defined on the spectral interval under consideration and satisfies
\[
    \det \Psi(x) = 0, \qquad x \in \mathbb{R}.
\]
The associated projected matrices are defined entrywise by
\[
    \bigl(M_0^k\bigr)_{pq}
    = \bigl(g^{\nu_p(k)}\bigr)^{\top} \psi_{pq}(A)\, g^k,
    \qquad
    \bigl(M_1^k\bigr)_{pq}
    = \bigl(g^{\nu_p(k)}\bigr)^{\top} \psi_{pq}(A)\, Ag^k.
\]
The reciprocal step-size candidates are the finite generalized eigenvalues
$\theta$ of
\begin{equation}
\label{eq:Hgen-det-pencil}
    \det\bigl(M_1^k - \theta M_0^k\bigr) = 0.
\end{equation}
\end{definition}

The condition $\det\Psi(x)=0$ abstracts the structural singularity
behind~\eqref{eq:huang-cross-product}, but in general the entries of
$\Psi(x)$ may be independent filters, so $M_0^k$ and $M_1^k$ need not admit the Gram-type
representation~\eqref{eq:rank-one-gram}.
The following subclass imposes this additional separability.

\begin{definition}[Rank-one HDL class $\mathcal{HDL}_{\mathrm{gen}}^{(1)}$]
\label{def:Hgen-rank-one}
A construction in $\mathcal{HDL}_{\mathrm{gen}}$ belongs to the rank-one
HDL class $\mathcal{HDL}_{\mathrm{gen}}^{(1)}$ if its spectral-filter symbol
admits a separable factorization
\[
    \Psi(x) = u(x)v(x)^{\top},
    \qquad
    \psi_{pq}(x) = u_p(x)v_q(x),
    \qquad p,q = 1,\ldots,m,
\]
where $u(x) = (u_1(x),\ldots,u_m(x))^{\top}$ and
$v(x) = (v_1(x),\ldots,v_m(x))^{\top}$ are vector-valued Laurent-type
spectral filters well-defined on $\mathbb{R}$.  In this case,
define the left and right filtered gradient blocks by
\[
    L^k =
    \begin{bmatrix}
        u_1(A)g^{\nu_1(k)} & \cdots & u_m(A)g^{\nu_m(k)}
    \end{bmatrix},
    \qquad
    R^k =
    \begin{bmatrix}
        v_1(A)g^k & \cdots & v_m(A)g^k
    \end{bmatrix}.
\]
Then the projected matrices have the Gram-type representation
\begin{equation}
\label{eq:rank-one-gram}
    M_0^k = (L^k)^{\top} R^k,
    \qquad
    M_1^k = (L^k)^{\top} A R^k,
\end{equation}
and the reciprocal candidates are generated by~\eqref{eq:Hgen-det-pencil}.
\end{definition}

Since a rank-one symbol satisfies $\det\Psi(x) = 0$ for every $m \geq 2$,
we have $\mathcal{HDL}_{\mathrm{gen}}^{(1)} \subset \mathcal{HDL}_{\mathrm{gen}}$.
The inclusion is generally strict: the singularity of $\Psi(x)$ does not
require $\Psi(x)$ to have rank one.

\begin{remark}[Gradient structure in the rank-one subclass]
\label{rem:Hgen1-gradient-structure}
In the general class $\mathcal{HDL}_{\mathrm{gen}}$, the filter
$\psi_{pq}$ depends on both indices $p$ and $q$, so the entry does not
decompose into a product of a vector depending only on $p$ and one
depending only on $q$.  The rank-one factorization
$\psi_{pq}(x)=u_p(x)v_q(x)$ in $\mathcal{HDL}_{\mathrm{gen}}^{(1)}$
restores this structure: setting $\ell_p^k = u_p(A)g^{\nu_p(k)}$ and
$r_q^k = v_q(A)g^k$ yields
\[
    M_0^k = (L^k)^\top R^k, \qquad M_1^k = (L^k)^\top A R^k,
\]
which are Gram-type matrices formed between a filtered gradient
history block $L^k$ and a filtered current gradient block $R^k$. 
\end{remark}

\begin{example}[The original two-dimensional HDL construction]
\label{ex:special-Huang}
The original two-dimensional Huang construction of Huang et al. \cite{huang2021equipping}
is a representative member of $\mathcal{HDL}_{\mathrm{gen}}^{(1)}$.  One takes
\[
    \psi_4(x) = 1, 
    \psi_1(x) = (1 - \alpha_{k-2} x)^{-1}, 
    \psi_2(x) = (1 - \alpha_{k-1} x)^{-1}, 
\psi_3(x)=\psi_1(x)\psi_2(x),
\]
with indices $\nu_1(k) = k-2$ and $\nu_2(k) = k-1$.  The associated
rank-one factorization is
\[
    u(x) =
    \begin{pmatrix}
        (1 - \alpha_{k-2} x)^{-1} \\ 1
    \end{pmatrix},
    \qquad
    v(x) =
    \begin{pmatrix}
        1 \\ (1 - \alpha_{k-1} x)^{-1}
    \end{pmatrix},
\]
giving the left and right filtered gradient blocks
\[
    L^k =
    \begin{bmatrix}
        (I - \alpha_{k-2} A)^{-1} g^{k-2} & g^{k-1}
    \end{bmatrix},
    \qquad
    R^k =
    \begin{bmatrix}
        g^k & (I - \alpha_{k-1} A)^{-1} g^k
    \end{bmatrix}.
\]
Hence the special HDL step-size is a two-dimensional member of
$\mathcal{HDL}_{\mathrm{gen}}^{(1)}$, and its reciprocal candidates are the
generalized eigenvalues of the $2 \times 2$ pencil
\begin{equation}
\label{eq:special-Huang-pencil}
    \det\bigl((L^k)^{\top} A R^k - \theta (L^k)^{\top} R^k\bigr) = 0.
\end{equation}
\end{example}

\subsection{Pseudo-memory method and narrow HDL class}
\label{subsec:moment-hankel-narrow}

In this subsection, we show that the LMSD pencil can be reconstructed
from a finite window of scalar moments: the projected matrices $H_0^k$ and $H_1^k$ depend only
on a finite window of $2m+1$ moments $h_0,\ldots,h_{2m}$, and the
full gradient block $G^k$ need never be stored. This is called the \textit{pseudo-memory} method.  We then define the narrow HDL class
$\mathcal{HDL}_{\mathrm{nar}} \subseteq \mathcal{HDL}_{\mathrm{gen}}$,
whose induced pencils are reconstructible from such a scalar window.

The LMSD pencil is defined by the gradient block $G^k = [g^{k-m}, \ldots, g^{k-1}]$ via
\begin{equation}
\label{eq:lmsd-ritz-pencil}
    (G^k)^{\top} A G^k z = \theta (G^k)^{\top} G^k z.
\end{equation}
With
$L^k = R^k = G^k$ and identity filters $u_p(x) = v_q(x) = 1$,
the pencil~\eqref{eq:lmsd-ritz-pencil} is an instance of
$\mathcal{HDL}_{\mathrm{gen}}^{(1)}$. Let $Q_k \in \mathbb{R}^{n \times m}$ be
an orthonormal basis of
$ \operatorname{span}\{g^{k-m}, \ldots, g^{k-1}\}$
and define the projected matrix
\begin{equation}
\label{eq:projected-matrix}
    T_k = Q_k^\top A Q_k \in \mathbb{R}^{m \times m}.
\end{equation}
Its eigenvalues $\theta_1^k \leq \cdots \leq \theta_m^k$ are Ritz
values, low-cost approximations to the $m$ eigenvalues of $A$ most
represented in the current gradient history.

By \cite{gu2021modified}, for quadratic objectives the
gradient recurrence implies
\begin{equation}
\label{eq:G-Krylov-span-equivalence}
    \operatorname{span}\{g^{k-m}, \ldots, g^{k-1}\}
    = \operatorname{span}\{w, Aw, \ldots, A^{m-1}w\},
\end{equation}
under the usual nonsingularity condition, so the Ritz
values of~\eqref{eq:lmsd-ritz-pencil} coincide with the generalized
eigenvalues of the Hankel moment pair
\begin{equation}
\label{eq:moment-Hankel-pair}
    H_0^k = \bigl[h_{r+s}\bigr]_{r,s=0}^{m-1},
    \qquad
    H_1^k = \bigl[h_{r+s+1}\bigr]_{r,s=0}^{m-1},
\end{equation}
where $h_j = w^\top A^j w$ and $w=g^{k-m}$, without forming $G^k$ explicitly.

\begin{remark}[Yuan and NY step-sizes as special cases]
\label{rem:yuan-special-case}
When $m = 2$, the larger Ritz value $\theta_2$ of $T_k$ recovers the
Yuan step-size $\alpha^{\mathrm{Y}} = \theta_2^{-1}$~\cite{yuan2006new};
when $m = 3$, the analogous construction recovers the NY step-size~\cite{xie2026new}.
See Appendix~\ref{subsec:app-yuan} for the explicit derivations.
In both cases, the Ritz values admit closed-form expressions via the
roots of the characteristic polynomial of $T_k$.
For general $m$, no closed-form root formula exists, so computing the
Ritz values via polynomial root-finding offers no practical advantage
over solving the generalized eigenvalue problem directly; both require
$O(m^3)$ work.
\end{remark}

However, forming $G^k$ requires storing $m$ vectors in $\mathbb{R}^n$,
which is the dominant memory cost of LMSD\@.
The moment viewpoint eliminates the need to store $G^k$ entirely.
For $i = 0, \ldots, m$, the gradient recurrence gives
$g^{k-m+i} = p_i^k(A)w$, where
\[
    p_0^k(x) = 1,
    \qquad
    p_{i+1}^k(x) = (1 - \alpha_{k-m+i}\, x)\, p_i^k(x).
\]
The critical observation is that $H_0^k$ and $H_1^k$ depend only on
the scalar moments $h_0, \ldots, h_{2m-1}$, not on the vectors in
$G^k$.  These moments are recoverable without storing $G^k$: since
$g^{k-m+i} = p_i^k(A)w$ for each $i$, every pairwise inner product
$(g^{k-m+i})^\top g^{k-m+j}$ is a polynomial moment of $A$ of
degree at most $i+j$.  Stacking the $2m+1$ scalar products
\begin{equation}
\label{eq:pseudo-memory-qk}
    q^k =
    \bigl(
        \|g^{k-m}\|^2,\;
        (g^{k-m+1})^\top g^{k-m},\;
        \|g^{k-m+1}\|^2,\;
        \ldots,\;
        \|g^k\|^2
    \bigr)^\top
\end{equation}
yields the linear system $q^k = S^k h^k$, where
$h^k = (h_0^k, \ldots, h_{2m}^k)^\top$ and
$S^k \in \mathbb{R}^{(2m+1)\times(2m+1)}$ is lower triangular with
positive diagonal, computable from the step-sizes
$\alpha_{k-m}, \ldots, \alpha_{k-1}$ alone.  By forward
substitution, the full moment window $h_0^k, \ldots, h_{2m}^k$ is
recovered at $O(m^2)$ cost, with no vector storage beyond $g^{k-1}$
and $g^k$.  The explicit construction of $S^k$, the recurrence for updating the moment window across cycles, and the residual evaluation formula are
given in Appendix~\ref{app:pseudo-memory}.

The resulting pseudo-memory implementation therefore requires
\[
    2n + (2m+1) + m = 2n + 3m + 1 \text{ scalars},
\]
compared with $m(n+1)$ scalars for standard LMSD\@.  For $n \gg m$,
the dominant memory cost reduces from $m(n+1)$ to $2n + 3m + 1$;
the BB method likewise requires $2n$ scalars. Once \eqref{eq:pseudo-memory-qk} is obtained, the cost of constructing moment windows and solving the
generalized eigenvalue problem~\eqref{eq:moment-Hankel-pair} only depend on $m$.

This observation motivates the following definition.

\begin{definition}[Narrow HDL class $\mathcal{HDL}_{\mathrm{nar}}$]
\label{def:Hnar}
A construction in $\mathcal{HDL}_{\mathrm{gen}}$ belongs to the
\emph{narrow HDL class}, denoted $\mathcal{HDL}_{\mathrm{nar}}$, if
there exist $w = g^{k-m}$, scalar moments $h_j = w^{\top} A^j w$
for $j \geq 0$, and a linear map
$\mathcal{T} : \mathbb{R}^{2m} \to \mathbb{R}^{m \times m}$
depending only on the construction parameters but not on the moment
values, such that
\begin{equation}
\label{eq:Hnar-finite-moment-representation}
    M_0^k = \mathcal{T}(h_0, h_1, \ldots, h_{2m-1}),
    \qquad
    M_1^k = \mathcal{T}(h_1, h_2, \ldots, h_{2m}).
\end{equation}
\end{definition}

By definition, $\mathcal{HDL}_{\mathrm{nar}} \subseteq \mathcal{HDL}_{\mathrm{gen}}$.
The narrow condition excludes negative powers of $A$ and infinite
moment tails, reducing both storage and computational cost.

\subsection{The generalized Gu--Du moment framework}
\label{subsec:gudu-framework}

The moment--Hankel realization of Section~\ref{subsec:moment-hankel-narrow}
recovers the Ritz values as generalized eigenvalues of the Hankel pair
$(H_0^k, H_1^k)$. Gu and Du ~\cite{gu2021modified} pursued the same scalar-moment data from a different perspective: rather than
forming a pencil and extracting its eigenvalues, it directly fits a
discrete spectral measure (nodes $a_1,\dots,a_m$ and weights
$t_1,\dots,t_m$) to the moment equations $h_j = \sum_i t_i a_i^j$.
The nodes serve as approximations to the eigenvalues of $A$ and generate
step-sizes $\alpha_i = 1/a_i$, while the weights encode the energy
distribution of the reference vector $w = g^{k-m}$ over the
recovered spectral components.  We generalize the original construction by allowing an arbitrary
full-rank linear combination of the $2m+1$ moment residual equations
via a selector matrix $W\in\mathbb{R}^{(2m+1)\times 2m}$ of rank $2m$,
which subsumes Long, Short, and Mean LMSD as special cases.

The basic Gu--Du moment model seeks nodes $a_1,\dots,a_m$ and
weights $t_1,\dots,t_m$ such that
\begin{equation}
\label{eq:gudu-basic-moment-model}
    h_j = \sum_{i=1}^m t_i a_i^j, \qquad j = 0, 1, \ldots, 2m-1.
\end{equation}
It is shown in~\cite{gu2021modified} that the nodes
$a_1,\dots,a_m$ solving~\eqref{eq:gudu-basic-moment-model} are in
one-to-one correspondence with the Ritz values
$\theta_1^k,\dots,\theta_m^k$ of~\eqref{eq:lmsd-ritz-pencil}, and
the weight $t_i$ equals the squared norm of the projection of
$g^{k-m}$ onto the $i$-th Ritz direction in the Krylov subspace
$\operatorname{span}\{w, Aw, \ldots, A^{m-1}w\}$, capturing the
energy of $g^{k-m}$ associated with the $i$-th reduced spectral
component, that is
\begin{equation}
\label{eq:component-energy}
    t_i^k = \bigl((g^{k-m})^\top Q_k z_i\bigr)^2,
    \qquad i = 1, \ldots, m.
\end{equation}

In essence, the moment-equation approach approximates the full
$n$-point spectral measure by an $m$-point discrete measure
with the weights $t_i$ explicitly available to guide step-size
design.

Note that~\eqref{eq:gudu-basic-moment-model} uses $2m$ equations,
but the pseudo-memory scheme of Section~\ref{subsec:moment-hankel-narrow}
makes $h_{2m}$ available, yielding $2m+1$ residual
equations in total:
\begin{equation}
\label{eq:gudu-moment-residuals}
    E_j(a,t;h) = h_j - \sum_{i=1}^m t_i a_i^j,
    \qquad j = 0, 1, \ldots, 2m.
\end{equation}
A full-rank selector supplies $2m$ scalar equations for the $2m$
unknowns $(a,t)$.  For the standard selectors used below, uniqueness
holds on the positive distinct-node branch, up to permutation. We generalize this framework to allow an arbitrary full-rank
linear selection of the $2m+1$ moment residual equations,
which subsumes the original Gu--Du method as a special case.

\begin{definition}[Generalized Gu--Du class $\mathcal{GD}_m$]
\label{def:generalized-gudu-construction}
The generalized Gu--Du class $\mathcal{GD}_m$ consists of
constructions that choose a full-rank selector matrix
\[
    W \in \mathbb{R}^{(2m+1) \times 2m},
    \qquad \operatorname{rank}(W) = 2m,
\]
and solve
\begin{equation}
\label{eq:gudu-selected-equations}
    W^{\top} E(a, t; h) = 0.
\end{equation}
If the recovered nodes are nonzero, the step-sizes are
$\alpha_i = 1/a_i$ for $i = 1, \ldots, m$.
\end{definition}

This definition includes consecutive moment equations used in LMSD
as well as shifted or averaged variants; see Example~\ref{ex:gudu-long-short-mean} for details.

\begin{example}[Long, short, and mean LMSD as selected equations]
\label{ex:gudu-long-short-mean}
Define the residual components $E_j(a,t;h) = h_j - \sum_{i=1}^m t_i a_i^j$ for $j \geq 0$.
Then the three  LMSD variants correspond to different choices of the selector $W$ in Definition~\ref{def:generalized-gudu-construction}.

\begin{itemize}
    \item \textbf{Long LMSD:} Solve $E_0 = 0, E_1 = 0, \ldots, E_{2m-1} = 0$. This uses the moment window $h_0, \ldots, h_{2m-1}$ with weights $t_i$ computed by \eqref{eq:component-energy}.
    \item \textbf{Short LMSD:} Solve $E_1 = 0, E_2 = 0, \ldots, E_{2m} = 0$. Shifting the window to $h_1, \ldots, h_{2m}$ corresponds to effective weights $\widetilde{t}_i = t_i a_i$.
    \item \textbf{Mean LMSD:} Solve $E_0 + E_1 = 0, E_1 + E_2 = 0, \ldots, E_{2m-1} + E_{2m} = 0$. The averaged moments $\mu_j = h_j + h_{j+1}$ yield effective weights $\widetilde{t}_i = t_i(1 + a_i)$.
\end{itemize}
\end{example}

\begin{remark}\label{rmk:LMSDlongshort}
Long LMSD and Short LMSD are the generalizations of the
BB1 and BB2 formulas, respectively. See~\cite{fletcher2012limited}.
Concretely, the effective weights $\widetilde{t}_i = t_i a_i$ in
Short LMSD shift the recovered nodes from eigenvalues of
$g^\top A g / g^\top g$ to those of $g^\top A^2 g / g^\top A g$,
matching the BB2 Rayleigh-quotient structure.
\end{remark}

\subsection{Rank-one structure and finite-window realization}
\label{subsec:huang-side-basic-relations}

We next separate two structural requirements that play different roles
in spectral step-size construction.  The class
$\mathcal{HDL}_{\mathrm{gen}}^{(1)}$ records rank-one separability of
the spectral-filter symbol, which explains spectral recovery through
Gram-type pencils.  The class $\mathcal{HDL}_{\mathrm{nar}}$ records
shifted finite-moment recoverability of the induced pencil in the sense
of~\eqref{eq:Hnar-finite-moment-representation}, which explains
low-memory realization through finite moment windows.

The inclusions
$$
    \mathcal{HDL}_{\mathrm{gen}}^{(1)}
    \subseteq
    \mathcal{HDL}_{\mathrm{gen}},
    \qquad
    \mathcal{HDL}_{\mathrm{nar}}
    \subseteq
    \mathcal{HDL}_{\mathrm{gen}}
$$
follow directly from the definitions.  The following example is used
only to clarify that rank-one recovery and finite-window realization
are independent requirements.

\begin{example}[Independence of rank-one structure and finite-window realization]
\label{ex:rankone-narrow-independence}
We first give a member of
$\mathcal{HDL}_{\mathrm{nar}}\setminus\mathcal{HDL}_{\mathrm{gen}}^{(1)}$.
Let $m=3$ and consider
$$
\Psi(x)
=
\begin{pmatrix}
1 & x & x\\
x & x^2 & x^2\\
x & x^2 & x^3
\end{pmatrix}.
$$
Since the first two rows satisfy $R_2 = x R_1$, we have $\det\Psi\equiv0$,
so the construction belongs to $\mathcal{HDL}_{\mathrm{gen}}$.
The matrix has rank two, so it cannot be written as $u(x)v(x)^\top$,
and the construction is not in $\mathcal{HDL}_{\mathrm{gen}}^{(1)}$.
A direct computation gives
$$
M_0
=
\begin{pmatrix}
h_0 & h_1 & h_1\\
h_1 & h_2 & h_2\\
h_1 & h_2 & h_3
\end{pmatrix},
$$
and $M_1$ is obtained by shifting every index by one.  Setting
$\mathcal T(h_0,\ldots,h_3)=M_0$, we have
$M_1=\mathcal T(h_1,\ldots,h_4)$, so the construction belongs to
$\mathcal{HDL}_{\mathrm{nar}}$.  Hence
$\mathcal{HDL}_{\mathrm{nar}}\not\subseteq
\mathcal{HDL}_{\mathrm{gen}}^{(1)}$.

Conversely, we give a member of
$\mathcal{HDL}_{\mathrm{gen}}^{(1)}\setminus\mathcal{HDL}_{\mathrm{nar}}$.
Let $m=2$, let $\beta\ne0$, and assume $I-\beta A$ is nonsingular.
Consider the symbol
$$
\Psi(x)
=
\begin{pmatrix}
(1-\beta x)^{-1}\\
1
\end{pmatrix}
\begin{pmatrix}
1 & 1+x
\end{pmatrix},
$$
which is rank-one separable, so the construction belongs to
$\mathcal{HDL}_{\mathrm{gen}}^{(1)}$.  With
$g^{\nu_1(k)}=g^{\nu_2(k)}=g^k=w$, the induced pencil involves
$$
    w^\top(I-\beta A)^{-1}w
    =\sum_{\ell=0}^{\infty}\beta^\ell h_\ell,
$$
which depends on an infinite moment sequence whenever $\beta\ne0$.
Hence the construction is not narrow in general, and
$\mathcal{HDL}_{\mathrm{gen}}^{(1)}\not\subseteq
\mathcal{HDL}_{\mathrm{nar}}$.
\end{example}

The intersection
$\mathcal{HDL}_{\mathrm{gen}}^{(1)}\cap\mathcal{HDL}_{\mathrm{nar}}$
is nonempty and contains the main finite-memory examples.

\begin{example}[The special HDL construction lies in the overlap]
\label{ex:special-huang-overlap}
We revisit Example~\ref{ex:special-Huang} with $w=g^{k-2}$,
$a=\alpha_{k-2}$, $b=\alpha_{k-1}$, so that $g^{k-1}=(I-aA)w$
and $g^k=(I-bA)(I-aA)w$.
The effective symbol is
$$
\Psi_{\rm sp}(x)
=
\begin{pmatrix}
1\\(1-ax)^2
\end{pmatrix}
\begin{pmatrix}
1-bx & 1
\end{pmatrix},
$$
which is rank-one, so the construction belongs to
$\mathcal{HDL}_{\mathrm{gen}}^{(1)}$.  Since the entries of $\Psi_{\rm sp}$
are polynomials of degree at most three, $M_0$ and $M_1$ are determined
by $h_0,\ldots,h_3$ and $h_1,\ldots,h_4$ respectively via the same
linear map, so the construction belongs to $\mathcal{HDL}_{\mathrm{nar}}$.
Hence $\mathrm{special\ HDL}\in
\mathcal{HDL}_{\mathrm{gen}}^{(1)}\cap\mathcal{HDL}_{\mathrm{nar}}$. 
\end{example}

\begin{example}[Long, short, and mean LMSD]
\label{ex:lmsd-family-overlap}
With $z(x)=(1,x,\ldots,x^{m-1})^\top$, the three variants are generated
by $P_q(x)=(q(x)z(x))z(x)^\top$ for $q\in\{1,\,x,\,1+x\}$.  The
rank-one factorization places all three in
$\mathcal{HDL}_{\mathrm{gen}}^{(1)}$, and polynomial entries give
finite moment windows of length at most $2m+1$, so all three belong to
$\mathcal{HDL}_{\mathrm{nar}}$.  These are the same variants described
from the Gu--Du viewpoint in Example~\ref{ex:gudu-long-short-mean}.
\end{example}

Combining the examples above, we see that rank-one separability and
finite-window realization are genuinely distinct requirements:
$$
  \mathcal{HDL}_{\mathrm{gen}}^{(1)}
    \subsetneq
    \mathcal{HDL}_{\mathrm{gen}},
    \qquad
    \mathcal{HDL}_{\mathrm{nar}}
    \subsetneq
    \mathcal{HDL}_{\mathrm{gen}},
$$
while
$$
    \mathcal{HDL}_{\mathrm{gen}}^{(1)}
    \not\subseteq
    \mathcal{HDL}_{\mathrm{nar}},
    \qquad
    \mathcal{HDL}_{\mathrm{nar}}
    \not\subseteq
    \mathcal{HDL}_{\mathrm{gen}}^{(1)}.
$$
The useful finite-memory spectral candidates arise from the overlap
$\mathcal{HDL}_{\mathrm{gen}}^{(1)}\cap\mathcal{HDL}_{\mathrm{nar}}$,
where Gram-type spectral recovery and bounded-window realization are
available simultaneously. For example, as shown in~\cite{huang2021equipping}$,~\mathrm{special\ HDL}$ can be computed only on the historical
BB step-sizes $\alpha_{k-2}$ and $\alpha_{k-1}$, which
precisely reflects the pseudo-memory feature.

\subsection{Compatibility of HDL classes and Gu--Du moment recovery}
\label{subsec:gudu-compatibility-rigidity}

We now compare two mechanisms for recovering spectral candidates.
An HDL construction recovers candidates as generalized eigenvalues of
a determinant pencil, whereas a Gu--Du construction recovers spectral
nodes and weights from selected finite moment equations.  The purpose
of this subsection is to identify when these two recovery mechanisms
produce the same spectral nodes from the same moment information.
Intersections such as
$$
    \mathcal{GD}_m\cap\mathcal{HDL}_{\mathrm{gen}},
    \qquad
    \mathcal{GD}_m\cap\mathcal{HDL}_{\mathrm{nar}},
    \qquad
    \mathcal{GD}_m\cap\mathcal{HDL}_{\mathrm{gen}}^{(1)}
$$
are therefore compatibility statements, rather than mere set-theoretic
relations, and require explicit verification.

Throughout this subsection, $\Omega\subset(0,\infty)$ denotes a
compact interval containing the eigenvalues of $A$, and all HDL
representations are assumed to use Laurent-type spectral filters
well-defined on $\Omega$.  In the lemma and propositions below,
atomic measures are supported in $\Omega$, so nodes range over
the interval $\Omega$. The algebraic proofs of the results in this subsection
are collected in Appendix~\ref{app:gudu-huang-proofs}.

\begin{definition}[HDL representations of Gu--Du constructions]
\label{def:gudu-huang-realization}
Let $G\in\mathcal{GD}_m$ recover nodes $a_1(h),\ldots,a_m(h)$ from
each moment sequence $h$.

\begin{enumerate}
    \item $G$ \emph{admits an HDL representation}
    ($G\in\mathcal{GD}_m\cap\mathcal{HDL}_{\mathrm{gen}}$) if there
    exists a spectral filter $\Psi(x)$ such that the resulting pencil
    $M_1(h)-\theta M_0(h)$ is regular with generalized eigenvalues
    exactly $a_1(h),\ldots,a_m(h)$ for every $h$.

    \item The representation is \emph{narrow}
    ($G\in\mathcal{GD}_m\cap\mathcal{HDL}_{\mathrm{nar}}$) if there
    exists a fixed linear map
    $\mathcal{T}:\mathbb{R}^{2m}\to\mathbb{R}^{m\times m}$ such that
    \begin{equation}
    \label{eq:narrow-realization-window}
        M_0(h)=\mathcal{T}(h_0,\ldots,h_{2m-1}),
        \qquad
        M_1(h)=\mathcal{T}(h_1,\ldots,h_{2m}).
    \end{equation}

    \item The representation is \emph{rank-one}
    ($G\in\mathcal{GD}_m\cap\mathcal{HDL}_{\mathrm{gen}}^{(1)}$) if
    $\operatorname{rank}\Psi(x)\leq 1$ pointwise on $\Omega$.
\end{enumerate}
\end{definition}

In the quadratic setting, every gradient satisfies
$g^{\nu_p(k)}=\rho_p(A)w$ and $g^k=\rho_0(A)w$ for polynomial
filters $\rho_p$ and $\rho_0$. Absorbing these gradient factors into
the spectral filter $\Psi$ yields an \textit{effective polynomial symbol}
\begin{equation}\label{eq:effective polynomial symbol}
    P(x) = D_\rho(x)\,\Psi(x)\,\rho_0(x),
\end{equation}
where $D_\rho(x)=\operatorname{diag}(\rho_1(x),\ldots,\rho_m(x))$.
The matrices $M_0$ and $M_1$ are then given by $M_0=\Lambda(P)$ and
$M_1=\Lambda(xP)$, where for a matrix-valued polynomial
$Q(x)=\sum_k Q^{(k)}x^k$ we define $\Lambda(Q)$ to be the
$m\times m$ matrix with $(i,j)$-entry
$[\Lambda(Q)]_{ij}=\sum_k Q^{(k)}_{ij}\,h_k$.
Wherever $D_\rho(x)$ and $\rho_0(x)$ are invertible,
$\operatorname{rank}P(x)=\operatorname{rank}\Psi(x)$, so the
rank-one condition on $\Psi$ in Definition~\ref{def:Hgen-rank-one}
is equivalent to $\operatorname{rank}P(x)=1$. The results below are
stated in terms of $P(x)$.

\begin{proposition}[Eigenvalue recovery condition for effective polynomial symbols]
\label{prop:gudu-node-compatibility-symbol}
Assume that $h_j=\sum_{i=1}^m t_ia_i^j$ for $j=0,\ldots,2m$, where
$a_1,\ldots,a_m$ are distinct nodes and $t_1,\ldots,t_m$ are nonzero. Let
$M_0=\Lambda(P)$ and $M_1=\Lambda(xP)$ for a matrix-valued polynomial
symbol $P(x)\in\mathbb R[x]^{m\times m}$. Then $\theta=a_s$ is a
generalized eigenvalue of $M_1-\theta M_0$ if and only if
\begin{equation}
\label{eq:node-compatibility-condition}
    \det\!\left(
    \sum_{\substack{i=1\\ i\ne s}}^m
    t_i(a_i-a_s)P(a_i)
    \right)=0.
\end{equation}
\end{proposition}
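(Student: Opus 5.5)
The plan is to express the pencil $M_1-\theta M_0$ directly as a finite sum over the atoms of the discrete measure, and then evaluate it at $\theta=a_s$.

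\emph{Step 1: atomic form of $\Lambda$.} Write $P(x)=\sum_{k=0}^{d}P^{(k)}x^k$. The key question is which moments $\Lambda(P)$ and $\Lambda(xP)$ actually involve. The hypothesis only gives $h_j=\sum_i t_ia_i^j$ for $j\le 2m$. So I first check that $P$ has degree at most $2m-1$, so that $xP$ has degree at most $2m$. This holds in the setting of $\mathcal{HDL}_{\mathrm{nar}}$ through~\eqref{eq:narrow-realization-window}. Alternatively, I read the statement as assuming the moment representation for every index that appears. Either way, substituting $h_k=\sum_i t_ia_i^k$ and exchanging the finite sums gives
\[
\Lambda(P)=\sum_{i=1}^m t_i\,P(a_i),\qquad \Lambda(xP)=\sum_{i=1}^m t_i\,a_i\,P(a_i).
\]

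\emph{Step 2: evaluate the pencil at a node.} From Step 1,
\[
M_1-\theta M_0=\sum_{i=1}^m t_i(a_i-\theta)P(a_i).
\]
At $\theta=a_s$, the $i=s$ term vanishes identically, leaving $\sum_{i\ne s}t_i(a_i-a_s)P(a_i)$. By definition, $a_s$ is a (finite) generalized eigenvalue exactly when $\det(M_1-a_sM_0)=0$. This is precisely condition~\eqref{eq:node-compatibility-condition}, and both directions of the equivalence follow at once.

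\emph{Anticipated obstacle.} The only real issue is a definitional caveat. Calling a root of $\det(M_1-\theta M_0)$ a generalized eigenvalue is meaningful only when the pencil is regular. If $\det(M_1-\theta M_0)\equiv0$, every $\theta$ satisfies the determinant equation, and the equivalence still holds trivially if "generalized eigenvalue" is read as "root of the determinant." I would state this convention explicitly. I would also note that distinctness of the nodes and $t_i\ne0$ are not needed for the equivalence itself. They matter only for interpreting the condition, for example when $P(a_i)$ has rank one: then the sum over $i\ne s$ is a sum of $m-1$ rank-one matrices, hence singular. That remark anticipates the rank-one compatibility results that follow.
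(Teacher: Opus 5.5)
Your proof is correct and matches the paper's argument: both write $M_1-\theta M_0=\sum_{i=1}^m t_i(a_i-\theta)P(a_i)$ using the exact atomic moments, then note that the $i=s$ term drops out at $\theta=a_s$. Your two caveats are points the paper leaves implicit. First, $\deg P\le 2m-1$ is needed so that the hypothesis on $h_0,\ldots,h_{2m}$ covers every moment appearing in $\Lambda(xP)$. Second, ``generalized eigenvalue'' must be read as a root of $\det(M_1-\theta M_0)$ when the pencil is not regular.
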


\begin{remark}[Rank-one effective symbols guarantee eigenvalue recovery]
\label{rem:rankone-symbol-node-compatibility}
Suppose $\operatorname{rank}P(a_i)\leq1$ for $i=1,\ldots,m$. For each
fixed $s$, the matrix in~\eqref{eq:node-compatibility-condition} is a
linear combination of at most $m-1$ rank-one matrices, hence has rank
at most $m-1$ and determinant zero. Thus every node $a_s$ is a
generalized eigenvalue of $M_1-\theta M_0$. This explains why the
rank-one condition $\operatorname{rank}P(x)=1$ naturally leads to
exact eigenvalue recovery. However, the rank-one condition alone does not imply membership in
$\mathcal{GD}_m$: a construction in
$\mathcal{HDL}_{\mathrm{nar}}\cap\mathcal{HDL}_{\mathrm{gen}}^{(1)}$
may depend on more than $2m$ linearly independent moment constraints,
which exceeds the budget permitted by any $\mathcal{GD}_m$ member;
see Example~\ref{ex:special_huang_not_gd_example}.
To promote narrow rank-one constructions into $\mathcal{GD}_m$,
an additional assumption is required;
see Proposition~\ref{prop:psd-nar-genone-in-gudu}.
\end{remark}

The following result shows that Gu--Du compatibility forces a general
HDL representation to be finite-moment realizable.

\begin{proposition}[Gu--Du-compatible HDL representations are narrow]
\label{prop:gudu-gen-equals-nar}
$$
    \mathcal{GD}_m\cap\mathcal{HDL}_{\mathrm{gen}}
    =
    \mathcal{GD}_m\cap\mathcal{HDL}_{\mathrm{nar}}.
$$
\end{proposition}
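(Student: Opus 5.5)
The inclusion $\mathcal{GD}_m\cap\mathcal{HDL}_{\mathrm{nar}}\subseteq\mathcal{GD}_m\cap\mathcal{HDL}_{\mathrm{gen}}$ is immediate from Definition~\ref{def:gudu-huang-realization}: a narrow representation is by definition an HDL representation whose pencil additionally has the finite-window form~\eqref{eq:narrow-realization-window}. The work is in the reverse inclusion. Given $G\in\mathcal{GD}_m$ with some HDL representation (filter $\Psi$, possibly with negative powers or infinite moment tails), I do not try to show that this particular pencil is narrow. Instead I build a \emph{new} narrow HDL representation with the same generalized eigenvalues. The key point is that the recovered nodes $a_1(h),\dots,a_m(h)$ of a $\mathcal{GD}_m$ member are, by definition, determined by the finitely many moments $h_0,\dots,h_{2m}$ entering $W^\top E(a,t;h)=0$. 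So whatever pencil represents them, its spectrum is a function of a bounded moment window only.

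The construction has three steps.

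\emph{Step 1.} Use the selector $W$ to pass to $2m$ selected moment combinations $\mu=W^\top h$ (with $h=(h_0,\dots,h_{2m})$). Since $\operatorname{rank}W=2m$, the equations $W^\top E=0$ say that $\mu_r=\sum_i t_i\,\omega_r(a_i)$, where $\omega_r(x)=\sum_j W_{jr}x^j$ are polynomials of degree at most $2m$.

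\emph{Step 2.} Realize the nodes as the roots of a degree-$m$ polynomial $\pi(\theta)=\prod_i(\theta-a_i)$. This $\pi$ is characterized by the linear "annihilation" conditions $\sum_i t_i\,q(a_i)\pi(a_i)=0$ for $q$ in an $m$-dimensional space of test polynomials.

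\emph{Step 3.} Express these conditions as $\det(M_1-\theta M_0)=0$ with $M_0=\Lambda(P)$, $M_1=\Lambda(xP)$, for a rank-one polynomial symbol $P(x)=z(x)\,y(x)^\top$. Here $z$ and $y$ are vectors of polynomials adapted to $W$, mirroring how the Long, Short and Mean LMSD selectors arise from $P_q=(qz)z^\top$ in Example~\ref{ex:lmsd-family-overlap}. The entries of $\Lambda(P)$ then use a window $h_0,\dots,h_{2m-1}$ and those of $\Lambda(xP)$ the shifted window $h_1,\dots,h_{2m}$, under one fixed linear map $\mathcal T$. That is exactly~\eqref{eq:narrow-realization-window}. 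By Proposition~\ref{prop:gudu-node-compatibility-symbol} and Remark~\ref{rem:rankone-symbol-node-compatibility}, every node $a_s$ is a generalized eigenvalue. Since $M_1-\theta M_0$ is $m\times m$, there are no others once the pencil is regular.

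The main obstacle is regularity and exact coincidence for \emph{every} $h$, together with the degree budget. I must check that the polynomial degrees forced by $W$ fit inside the windows $h_0,\dots,h_{2m-1}$ and $h_1,\dots,h_{2m}$, and that $\det M_0\neq0$ on the branch where the Gu--Du system has a unique positive distinct-node solution. My first attempt would be the reduction $M_0=V^\top\operatorname{diag}(t_i)\widetilde V$, with $V$ and $\widetilde V$ generalized Vandermonde matrices in the nodes. Nonsingularity then follows from distinct nodes and nonzero weights. If a general $W$ does not fit, the fallback is to use a pencil equivalence $M_j\mapsto XM_jY$ with $h$-independent invertible $X,Y$ to absorb $W$. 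This preserves both the spectrum and the fixed-linear-map structure, so it reduces the problem to the consecutive-window (Long LMSD) case, which is already narrow by~\eqref{eq:moment-Hankel-pair}.
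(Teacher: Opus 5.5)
Your argument has a genuine gap: it never uses the hypothesis that $G$ already has an HDL representation. Steps 1--3 start from an arbitrary full-rank selector $W$ and try to build a narrow rank-one pencil whose spectrum is the Gu--Du node set. If they succeeded, they would show that \emph{every} member of $\mathcal{GD}_m$ lies in $\mathcal{HDL}_{\mathrm{nar}}$. That is false by Example~\ref{ex:gudu-skip-e1}. For $m=1$ and the selector keeping only $E_0,E_2$, the node $\sqrt{h_2/h_0}$ cannot be written as a ratio of two linear forms in $(h_0,h_1,h_2)$, and such a ratio is exactly what any $1\times 1$ pencil $\Lambda(xP)-\theta\Lambda(P)$ produces.

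Your Steps 2--3 break precisely there. For the annihilation conditions, and likewise the entries of $P$ and $xP$, to be expressible through the selected data $W^\top h$, you need $x^jq\in\operatorname{span}\{\omega_r\}$ for $j=0,\dots,m$ and all $q$ in an $m$-dimensional test space. With $\operatorname{span}\{\omega_0,\omega_1\}=\operatorname{span}\{1,x^2\}$, no nonzero $q$ has both $q$ and $xq$ in that span. So ``polynomials adapted to $W$'' exist only for special selectors, such as Long, Short and Mean LMSD. Also, Remark~\ref{rem:rankone-symbol-node-compatibility} gives recovery only for exactly $m$-atomic moment sequences. It gives recovery for every $h$ only if the pencil entries factor through $W^\top h$.

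The fallback does not repair this. An equivalence $M_j\mapsto XM_jY$ with $h$-independent $X,Y$ preserves the generalized eigenvalues. Applied to the Long LMSD pencil, it still returns the Long LMSD nodes computed from $h_0,\dots,h_{2m-1}$. A different selector recovers different nodes for generic $h$ that are not exactly $m$-atomic; Short LMSD already does, cf.~\eqref{eq:long-short-ordering}. The selector acts on the moment vector in $\mathbb{R}^{2m+1}$, not on the $m\times m$ pencil, so it cannot be absorbed this way.

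The missing idea is to show that the HDL representation you are \emph{given} is already narrow, rather than to replace it. The paper works directly with the filter entries $\psi_{pq}$ of that representation. It treats $\mu\mapsto\int\psi_{pq}\,d\mu$ and $\mu\mapsto\int x\psi_{pq}\,d\mu$ as fixed linear combinations of $h_0,\dots,h_{2m}$ and tests them on point masses $\delta_{x_0}$ (Lemma~\ref{lem:shifted-finite-window-rigidity}). This forces $\psi_{pq}\in\operatorname{span}\{1,x,\dots,x^{2m-1}\}$, and the coefficients of $\psi_{pq}$ define $\mathcal{T}$ directly, with no regularity or Vandermonde check needed.

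Your remark that membership in $\mathcal{GD}_m$ by itself ties only the \emph{spectrum} to the moment window is accurate. That is exactly where the HDL hypothesis has to enter, through the filter $\Psi$, and your plan never touches $\Psi$.
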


Since $\mathcal{HDL}_{\mathrm{gen}}^{(1)}\subseteq\mathcal{HDL}_{\mathrm{gen}}$,
Proposition~\ref{prop:gudu-gen-equals-nar} immediately gives
\begin{equation}
\label{eq:gudu-rankone-compatible-forces-narrow}
    \mathcal{GD}_m\cap\mathcal{HDL}_{\mathrm{gen}}^{(1)}
    \subseteq
    \mathcal{GD}_m\cap\mathcal{HDL}_{\mathrm{gen}}
    =
    \mathcal{GD}_m\cap\mathcal{HDL}_{\mathrm{nar}}.
\end{equation}
Thus any Gu--Du construction in the rank-one HDL class is
automatically narrow. To obtain the reverse inclusion
$\mathcal{GD}_m\cap\mathcal{HDL}_{\mathrm{nar}}
\subseteq\mathcal{GD}_m\cap\mathcal{HDL}_{\mathrm{gen}}^{(1)}$,
we need an additional mechanism that forces $\operatorname{rank}P(x)=1$;
this is supplied by positive semidefiniteness.

We record two boundary examples before stating
the main result.

\begin{example}[Special HDL construction is outside $\mathcal{GD}_2$]
\label{ex:special_huang_not_gd_example}
We revisit the special two-dimensional HDL construction in
Example~\ref{ex:special-huang-overlap}, which belongs to 
$\mathcal{HDL}_{\rm gen}^{(1)}\cap\mathcal{HDL}_{\rm nar}$. By~\eqref{eq:effective polynomial symbol}, the effective polynomial symbol is
$$
P(x)
=
\begin{pmatrix}
1\\(1-ax)^2
\end{pmatrix}
(1-bx)
\begin{pmatrix}
1-bx & 1
\end{pmatrix},
$$
so that $M_0=\Lambda(P)$ and $M_1=\Lambda(xP)$.
The eight entries of $M_0$ and $M_1$ together depend linearly on
$h_0,\ldots,h_4$, and their maximal linearly independent set has
dimension~$5$. Since any $\mathcal{GD}_2$ construction imposes at most
four linearly independent constraints on the moment sequence, the
special HDL construction cannot be reproduced by any member of
$\mathcal{GD}_2$. Therefore,
$\text{special HDL}
    \in
    \bigl(\mathcal{HDL}_{\rm nar}\cap\mathcal{HDL}_{\rm gen}^{(1)}\bigr)
    \setminus\mathcal{GD}_2.$
\end{example}

\begin{example}[A Gu--Du construction with no HDL representation]
\label{ex:gudu-skip-e1}
Consider the one-node model $h_j=ta^j$ for $j=0,1,2$. Select only the
residual equations $E_0=0$ and $E_2=0$, skipping $E_1$. Then
$h_0=t$ and $h_2=ta^2$, giving the positive root
\begin{equation}
\label{eq:skip-e1-positive-branch}
    a_+(h)=\sqrt{h_2/h_0}.
\end{equation}
This construction belongs to $\mathcal{GD}_1$, but admits no HDL
representation. Indeed, any such representation would produce on every
two-point measure $\mu_w=w\delta_x+(1-w)\delta_y$ a generalized eigenvalue
\begin{equation}
\label{eq:one-dimensional-huang-rational-form}
    R(w)
    =
    \frac{wx\Phi(x)+(1-w)y\Phi(y)}
         {w\Phi(x)+(1-w)\Phi(y)},
\end{equation}
whereas the skipped construction gives $S(w)=\sqrt{wx^2+(1-w)y^2}$.
Universality requires $R(w)=S(w)$ for all $0<w<1$. Differentiating
at $w=0$ and $w=1$ yields
\begin{equation}
\label{eq:skip-e1-endpoint-identities}
    \frac{\Phi(x)}{\Phi(y)}=\frac{x+y}{2y},
    \qquad
    \frac{\Phi(y)}{\Phi(x)}=\frac{x+y}{2x}.
\end{equation}
Multiplying gives $1=(x+y)^2/(4xy)$, contradicting $(x+y)^2>4xy$ for
$x\ne y$. Hence this construction has no HDL representation, and its
natural spectral relation $h_0\theta^2-h_2=0$ is nonlinear rather than
a linear generalized eigenvalue problem.
\end{example}

\begin{remark}[Nonlinear spectral relations beyond HDL classes]
\label{rem:gudu-polynomial-spectral-relations}
Example~\ref{ex:gudu-skip-e1} shows that a Gu--Du construction may
produce a nonlinear spectral relation after eliminating the weights;
computing such relations in general requires a polynomial resultant
computation, which is significantly harder than a linear generalized
eigenvalue problem, and we do not pursue this direction further.
\end{remark}

The boundary examples above show that finite-moment realization,
rank-one separability, and Gu--Du compatibility need not coincide in
the general setting.  Positive semidefiniteness supplies the missing
rigidity.  We now impose positive semidefiniteness of the polynomial
symbol $P(x)$ to obtain the reverse inclusion
$\mathcal{GD}_m\cap\mathcal{HDL}_{\mathrm{nar}}
\subseteq\mathcal{GD}_m\cap\mathcal{HDL}_{\mathrm{gen}}^{(1)}$.

\begin{proposition}[Positive semidefinite symbols with exact recovery are rank one]
\label{prop:psd-universal-recovery-rank-one}
Let $P(x)\in\mathbb R[x]^{m\times m}$ be symmetric and positive
semidefinite for every $x>0$. Assume that for every choice of pairwise
distinct positive nodes $x_1,\ldots,x_m$ and positive weights
$t_1,\ldots,t_m$, the pencil with
$$
    M_0=\sum_{i=1}^m t_iP(x_i),
    \qquad
    M_1=\sum_{i=1}^m t_ix_iP(x_i)
$$
satisfies: $M_0$ is nonsingular, and $\det(M_1-x_sM_0)=0$ for every
$s=1,\ldots,m$. Then
\begin{equation}
\label{eq:psd-forces-rank-one}
    \operatorname{rank}P(x)=1,
    \qquad x>0.
\end{equation}
\end{proposition}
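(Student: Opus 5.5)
The plan is to use the recovery condition at the \emph{smallest} node, where positive semidefiniteness turns singularity of a sum into a common kernel. Then I will show that a generic rank of at least two would allow $m-1$ nodes whose kernels intersect trivially, which is a contradiction. Finally, nonsingularity of $M_0$ rules out rank zero.

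\textbf{Step 1 (common kernels).} Fix distinct positive nodes and let $x_s$ be the smallest one. As in Proposition~\ref{prop:gudu-node-compatibility-symbol},
$$M_1-x_sM_0=\sum_{i\ne s}t_i(x_i-x_s)P(x_i).$$
Every coefficient $t_i(x_i-x_s)$ is positive and every $P(x_i)$ is positive semidefinite. A sum of positive semidefinite matrices with positive coefficients is singular if and only if their kernels share a nonzero vector. Since $x_s$ can always be chosen below any given finite set of larger nodes, we obtain:
\[
\text{(A)}\quad \bigcap_{j=1}^{m-1}\ker P(y_j)\neq\{0\}\ \text{ for all distinct } y_1,\dots,y_{m-1}>0.
\]
Applying the same principle to $M_0=\sum_i t_iP(x_i)\succeq0$, nonsingularity gives:
\[
\text{(B)}\quad \bigcap_{i=1}^{m}\ker P(x_i)=\{0\}\ \text{ for all distinct } x_1,\dots,x_m>0.
\]

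\textbf{Step 2 (an arbitrary $V$ is not annihilated everywhere).} Let $V\neq\{0\}$ be any subspace. Suppose $P(y)V=0$ for all $y$ outside a finite set. Because $P$ is polynomial, this identity then holds for every $y>0$. In that case any $m$ nodes have $V$ in their common kernel, contradicting (B). Hence the set $\{y>0:\ V\not\subseteq\ker P(y)\}$ is cofinite: it is the nonvanishing set of some nonzero polynomial entry of $P(y)v$ with $v\in V$.

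\textbf{Step 3 (generic rank is at most one).} Let $r$ be the maximal rank of $P(y)$ over $y>0$. Rank $r$ is attained on a cofinite set, since it is governed by nonvanishing $r\times r$ minors. Suppose $r\ge2$.
\begin{itemize}
\item Choose $y_1$ with $\operatorname{rank}P(y_1)=r$. Then $V_1=\ker P(y_1)$ has dimension $m-r\le m-2$.
\item Inductively, if $V_j\neq\{0\}$, use Step 2 to pick $y_{j+1}$ distinct from the earlier points with $V_j\not\subseteq\ker P(y_{j+1})$. Set $V_{j+1}=V_j\cap\ker P(y_{j+1})$, so that $\dim V_{j+1}\le\dim V_j-1$.
\end{itemize}
After $m-1$ points, $\dim V_{m-1}\le (m-r)-(m-2)=2-r\le 0$. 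So these $m-1$ distinct nodes have trivial common kernel, contradicting (A). Hence $\operatorname{rank}P(y)\le1$ on a cofinite set. Since $\operatorname{rank}\le1$ is a closed condition (all $2\times2$ minors vanish, and these are polynomials in $y$), it holds for every $x>0$.

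\textbf{Step 4 (rank is exactly one).} Suppose $P(x_0)=0$ for some $x_0>0$. Take nodes $x_1=x_0$ together with $m-1$ further distinct positive nodes. Then $M_0=\sum_{i\ge2}t_iP(x_i)$ is a sum of $m-1$ matrices of rank at most one. Its rank is therefore at most $m-1$, so $M_0$ is singular, contradicting the hypothesis. This proves \eqref{eq:psd-forces-rank-one}.

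The main obstacle is Step 3: converting the existential singularity statement (A) into a rank bound. The key is to use the smallest node so that all coefficients are positive and singularity becomes a common kernel. The dimension-counting induction then needs Step 2, which rests on the polynomial dependence of $P$ to guarantee the required cofinite choices.
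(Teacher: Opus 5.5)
Your proof is correct, and its central step differs from the paper's. The paper's first step lets the recovery node $\lambda$ vary and reads off the leading coefficient of $\lambda\mapsto\det\bigl(\sum_{i=1}^{m-1}t_i(x_i-\lambda)P(x_i)\bigr)$. This gives $\det\bigl(\sum_{i=1}^{m-1}t_iP(x_i)\bigr)=0$, which is your (A) in determinant form, since a positive semidefinite sum is singular exactly when the kernels meet. From there the paper argues algebraically. The determinant $Q(t)=\det\bigl(\sum_{i=1}^m t_iP(x_i)\bigr)$ is homogeneous of degree $m$ and divisible by every $t_j$, so $Q=c\,t_1\cdots t_m$ with $c>0$. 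The identity $\deg_s\det(B+sA_0)=\operatorname{rank}A_0$ for $B\succ0$, $A_0\succeq0$ then gives $\operatorname{rank}P(x_0)=1$ in one stroke.

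You instead place the recovery node below all others, so that positive semidefiniteness turns both hypotheses into kernel statements (A) and (B). You then run a greedy kernel-cutting argument: a kernel of dimension at most $m-2$ is cut down to $\{0\}$ by $m-2$ further nodes, contradicting (A). The lower bound $\operatorname{rank}\geq1$ comes from a separate rank count on $M_0$.

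Your route is more elementary and geometric: it makes visible why each $\ker P(x)$ must be a hyperplane. The paper's route is shorter, obtains $\le1$ and $\ge1$ at once, and yields $\det M_0=c\,t_1\cdots t_m$ as a by-product.

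Two small remarks. First, the polynomial dependence of $P$ on $x$ is not needed in your Step 2. If $V\neq\{0\}$ lay in $\ker P(y)$ for all $y$ outside a finite set, choosing $m$ such points already contradicts (B). So your argument, like the paper's, works for any positive semidefinite matrix-valued function. Second, the closing ``closed condition'' sentence in Step 3 is redundant, because $r$ was defined as the maximum rank over all $y>0$.
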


\begin{corollary}[Triple equivalence in the positive rank-one regime]
\label{cor:positive-triple-equivalence}
Suppose that for every $G\in\mathcal{GD}_m\cap\mathcal{HDL}_{\mathrm{gen}}$,
the effective polynomial symbol $P(x)$ of $G$ is symmetric and
positive semidefinite on $\Omega$, and the exact recovery condition of
Proposition~\ref{prop:psd-universal-recovery-rank-one} holds for
every choice of pairwise distinct nodes in $\Omega$ and positive
weights. Then
\begin{equation}
\label{eq:positive-triple-equivalence}
    \mathcal{GD}_m\cap\mathcal{HDL}_{\mathrm{gen}}
    =
    \mathcal{GD}_m\cap\mathcal{HDL}_{\mathrm{nar}}
    =
    \mathcal{GD}_m\cap\mathcal{HDL}_{\mathrm{gen}}^{(1)}.
\end{equation}
\end{corollary}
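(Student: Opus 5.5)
The corollary follows from Propositions~\ref{prop:gudu-gen-equals-nar} and~\ref{prop:psd-universal-recovery-rank-one} by a chain of inclusions. The first equality in~\eqref{eq:positive-triple-equivalence} is exactly Proposition~\ref{prop:gudu-gen-equals-nar}, which needs no extra hypothesis. For the second equality, \eqref{eq:gudu-rankone-compatible-forces-narrow} already gives
\[
\mathcal{GD}_m\cap\mathcal{HDL}_{\mathrm{gen}}^{(1)}\subseteq\mathcal{GD}_m\cap\mathcal{HDL}_{\mathrm{nar}},
\]
so it remains to prove the reverse inclusion
\[
\mathcal{GD}_m\cap\mathcal{HDL}_{\mathrm{nar}}\subseteq\mathcal{GD}_m\cap\mathcal{HDL}_{\mathrm{gen}}^{(1)}.
\]

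Fix $G\in\mathcal{GD}_m\cap\mathcal{HDL}_{\mathrm{nar}}$. By Proposition~\ref{prop:gudu-gen-equals-nar}, $G\in\mathcal{GD}_m\cap\mathcal{HDL}_{\mathrm{gen}}$, so the corollary's hypotheses apply to its effective polynomial symbol $P(x)$ from~\eqref{eq:effective polynomial symbol}. Thus $P$ is symmetric and positive semidefinite on $\Omega$, and the exact-recovery condition holds for all distinct nodes in $\Omega$ and positive weights. For an atomic measure $h_j=\sum_i t_i x_i^j$, we have $M_0=\Lambda(P)=\sum_i t_iP(x_i)$ and $M_1=\Lambda(xP)=\sum_i t_ix_iP(x_i)$, which is precisely the pencil in Proposition~\ref{prop:psd-universal-recovery-rank-one}. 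Applying that proposition, with $(0,\infty)$ replaced by $\Omega$, gives $\operatorname{rank}P(x)=1$ on $\Omega$.

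It then remains to pass from $P$ back to $\Psi$. As noted before Proposition~\ref{prop:gudu-node-compatibility-symbol}, $\operatorname{rank}\Psi(x)=\operatorname{rank}P(x)$ wherever $D_\rho$ and $\rho_0$ are invertible. Hence $\operatorname{rank}\Psi\le1$ pointwise on $\Omega$, possibly away from finitely many zeros of the polynomial filters. This is exactly the rank-one condition in Definition~\ref{def:gudu-huang-realization}(3), so $G\in\mathcal{GD}_m\cap\mathcal{HDL}_{\mathrm{gen}}^{(1)}$.

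\textbf{Main obstacle.} The delicate step is transferring Proposition~\ref{prop:psd-universal-recovery-rank-one}, which is stated for all $x>0$, to the compact interval $\Omega$, and handling the finitely many points where $\rho_p$ or $\rho_0$ vanish. I would handle both by a polynomial-identity argument. All $2\times2$ minors of $P$ are polynomials, and they vanish on the infinite set $\Omega$, so they vanish identically; this gives $\operatorname{rank}P\le1$ everywhere. The same reasoning applied to $\Psi$ (rational entries) extends the bound across the exceptional zeros by continuity. I would also check that $P\not\equiv0$: nonsingularity of $M_0$ forces $P\ne0$, so the rank is exactly one, although Definition~\ref{def:gudu-huang-realization} only requires $\le1$.
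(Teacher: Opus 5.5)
Your proposal is correct and follows the paper's proof. The first equality is Proposition~\ref{prop:gudu-gen-equals-nar}, one inclusion is \eqref{eq:gudu-rankone-compatible-forces-narrow}, and the reverse inclusion comes from Proposition~\ref{prop:psd-universal-recovery-rank-one} together with $\operatorname{rank}P=\operatorname{rank}\Psi$. Your continuity argument across the zeros of $\rho_p,\rho_0$ in $\Omega$ fills a step the paper passes over.

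One small correction: you do not get the restriction from $(0,\infty)$ to $\Omega$ by a polynomial-identity argument, since positive semidefiniteness and nonsingularity of $M_0$ do not propagate off $\Omega$. Instead, rerun the proof of Proposition~\ref{prop:psd-universal-recovery-rank-one} with nodes in $\Omega$; that proof only uses that $\Omega$ is infinite.
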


\begin{proof}
Proposition~\ref{prop:gudu-gen-equals-nar} gives
$\mathcal{GD}_m\cap\mathcal{HDL}_{\mathrm{gen}}
=\mathcal{GD}_m\cap\mathcal{HDL}_{\mathrm{nar}}$.
Since $\mathcal{HDL}_{\mathrm{gen}}^{(1)}\subseteq\mathcal{HDL}_{\mathrm{gen}}$,
equation~\eqref{eq:gudu-rankone-compatible-forces-narrow} gives
$\mathcal{GD}_m\cap\mathcal{HDL}_{\mathrm{gen}}^{(1)}
\subseteq\mathcal{GD}_m\cap\mathcal{HDL}_{\mathrm{nar}}$.
For the reverse inclusion, let
$G\in\mathcal{GD}_m\cap\mathcal{HDL}_{\mathrm{nar}}$.
By Proposition~\ref{prop:psd-universal-recovery-rank-one},
$\operatorname{rank}P(x)=1$ on $\Omega$, hence
$\operatorname{rank}\Psi(x)=1$, so
$G\in\mathcal{GD}_m\cap\mathcal{HDL}_{\mathrm{gen}}^{(1)}$.
Combining the two inclusions yields~\eqref{eq:positive-triple-equivalence}.
\end{proof}

\begin{remark}[LMSD and the triple equivalence]
\label{rem:lmsd-positive-triple-equivalence}
The long, short, and mean LMSD constructions satisfy the hypotheses
above on any positive spectral interval where their scalar weights are
positive. Their polynomial symbols $P_q(x)=q(x)z(x)z(x)^\top$ with
$z(x)=(1,x,\ldots,x^{m-1})^\top$ and $q\in\{1,x,1+x\}$ satisfy
$q(x)>0$, $P_q(x)\succeq0$, and $\operatorname{rank}P_q(x)=1$ on
$\Omega\subset(0,\infty)$. The evaluation matrix
$[z(a_1)\cdots z(a_m)]$ is a nonsingular Vandermonde matrix for
pairwise distinct nodes. Hence
$$
    \mathrm{LMSD}_{\rm long},\,
    \mathrm{LMSD}_{\rm short},\,
    \mathrm{LMSD}_{\rm mean}
    \in
    \mathcal{GD}_m\cap\mathcal{HDL}_{\mathrm{nar}}\cap\mathcal{HDL}_{\mathrm{gen}}^{(1)}.
$$
\end{remark}

Within the stated positive setting, the triple equivalence above
characterizes Gu--Du constructions that admit a positive HDL class
representation. Conversely, one may ask
whether positive narrow rank-one pencils are automatically compatible
with the Gu--Du moment framework.  The following proposition answers
this affirmatively.

\begin{proposition}[Positive narrow rank-one pencils are Gu--Du compatible]
\label{prop:psd-nar-genone-in-gudu}
Let $P(x)=q(x)u(x)u(x)^\top$ be an $m\times m$ positive semidefinite
polynomial symbol on $\Omega$, where $q(x)>0$ on $\Omega$ and
$u(x)=(u_1(x),\ldots,u_m(x))^\top$. Let
$h_j=\sum_{i=1}^m t_ia_i^j$ with $t_i>0$ and $a_i\in\Omega$
pairwise distinct be an exact positive $m$-atomic moment sequence.
Suppose the pencil $M_1-\theta M_0$ is regular for every such choice
of nodes and weights. Then the construction $G$ defined by this pencil
belongs to $\mathcal{GD}_m$, and consequently
\begin{equation}
\label{eq:psd-nar-genone-in-gudu}
    \mathcal{HDL}_{\mathrm{nar}}
    \cap
    \mathcal{HDL}_{\mathrm{gen}}^{(1)}
    \;\subseteq\;
    \mathcal{GD}_m.
\end{equation}
\end{proposition}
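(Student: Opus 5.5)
Write $U=[u(a_1)\,\cdots\,u(a_m)]$ and $D_t=\operatorname{diag}(t_1q(a_1),\ldots,t_mq(a_m))$. Because $h_j=\sum_i t_ia_i^j$ and $M_0=\Lambda(P)$, $M_1=\Lambda(xP)$, the pencil factors as
$$
M_0=U D_t U^\top,\qquad M_1=U D_t D_a U^\top,\qquad D_a=\operatorname{diag}(a_1,\ldots,a_m),
$$
with $D_t\succ0$ since $t_i>0$ and $q>0$ on $\Omega$. The plan is to exploit this congruence, extract the nodes and weights, and then match them to a concrete full-rank selector $W$.

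\emph{Step 1 (regularity forces $U$ nonsingular).} Suppose $U$ were singular, with $U^\top v=0$ for some $v\neq0$. Then $(M_1-\theta M_0)v=0$ for every $\theta$, so $\det(M_1-\theta M_0)\equiv0$. This contradicts regularity, so $U$ is nonsingular.

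\emph{Step 2 (eigenvalues and weights).} With $U$ nonsingular,
$$
\det(M_1-\theta M_0)=\det(U)^2\det(D_t)\prod_{i=1}^m(a_i-\theta),
$$
so the finite generalized eigenvalues are exactly the nodes $a_1,\ldots,a_m$, all simple. This is also the rank-one recovery of Remark~\ref{rem:rankone-symbol-node-compatibility}, now with the full spectrum identified.

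The eigenvector for $a_s$ is $z_s=U^{-\top}e_s$. It satisfies $z_s^\top M_0z_s=t_sq(a_s)$, so the weights can be read off as $t_s=z_s^\top M_0 z_s/q(a_s)$ under the normalisation $u(a_s)^\top z_s=1$. Hence the pencil recovers the full atomic data $(a,t)$, and the recovery map $h\mapsto(a,t)$ is well defined on exact positive $m$-atomic sequences.

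\emph{Step 3 (membership in $\mathcal{GD}_m$).} Take $W$ selecting the consecutive residuals $E_0,\ldots,E_{2m-1}$, i.e.\ the long-LMSD selector of Example~\ref{ex:gudu-long-short-mean}; it has rank $2m$. On an exact positive $m$-atomic sequence with distinct nodes, the system $E_0=\cdots=E_{2m-1}=0$ has a unique solution up to permutation, namely the $m$-point Gauss quadrature of the measure $\sum_i t_i\delta_{a_i}$. That solution is therefore $(a,t)$ itself.

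So $G$ and the Gu--Du construction with this $W$ return the same nodes $a_i(h)$ for every admissible $h$, which is exactly membership in $\mathcal{GD}_m$. Inclusion~\eqref{eq:psd-nar-genone-in-gudu} follows, read within the positive atomic regime the statement fixes.

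\emph{Main obstacle.} The difficulty is this interpretive step. Membership in $\mathcal{GD}_m$ must be understood as agreement of the recovered nodes on the positive $m$-atomic moment sequences, not as identical algebraic formulas on all $h$. Example~\ref{ex:special_huang_not_gd_example} shows that literal identity of the moment dependence can fail.

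I would therefore state explicitly that two constructions are identified when they recover the same nodes on this class, using uniqueness of the Hankel/Gauss moment solution. I would also check that the pencil's dependence on $h_{2m}$ causes no conflict: on exact atomic data $h_{2m}$ is determined by $h_0,\ldots,h_{2m-1}$, so both constructions see consistent data.
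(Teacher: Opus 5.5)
Your Steps 1 and 2 are correct, and the factorization $\det(M_1-\theta M_0)=\det(U)^2\det(D_t)\prod_i(a_i-\theta)$ is a cleaner form of the paper's remark that regularity and distinctness force the nodes to exhaust the finite spectrum. The gap is in Step 3, in the notion of membership you adopt.

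Agreement of recovered nodes on exact positive $m$-atomic sequences holds for \emph{every} regular rank-one pencil (Remark~\ref{rem:rankone-symbol-node-compatibility}). Under your reading, the hypotheses of the proposition therefore do no work. Narrowness is never used, and positivity and the symmetric form $quu^\top$ enter only to make $D_t$ nonsingular. Two consequences show the reading is wrong:
\begin{itemize}
\item Your argument goes through verbatim for the special HDL pencil of Example~\ref{ex:special_huang_not_gd_example}: write its symbol as $u(x)v(x)^\top$ and $M_0=UD_tV^\top$. It would place special HDL in $\mathcal{GD}_2$, contradicting that example.
\item It would identify short LMSD with the long-LMSD selector. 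Example~\ref{ex:gudu-long-short-mean} gives them different selectors, and their Ritz values differ on non-atomic data (cf.~\eqref{eq:lmsd-estimator}).
\end{itemize}
In the paper, membership is a statement about the moment budget. A $\mathcal{GD}_m$ member sees $h$ only through the $2m$ functionals $W^\top h$, and special HDL fails precisely because its pencil depends on five independent functionals.

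The missing idea is the degree count on which the paper's proof rests. Let $r=\deg q$ and $k=\max_p\deg u_p$. For $p$ attaining $k$, narrowness gives $\deg(xqu_p^2)=r+2k+1\le 2m$. Since $2k+2$ is even, this forces $2k+2\le 2m$. This parity step is where the symmetric structure matters: for a nonsymmetric $uv^\top$ the analogous span can have dimension $2m+1$, as it does for special HDL.

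Hence every entry of $P$ and $xP$ lies in $q\cdot\mathcal{P}_{2k+1}$, of dimension at most $2m$. So $M_0$ and $M_1$ depend on $h$ through at most $2m$ independent linear functionals. Choose $W$ to contain these functionals, completed to rank $2m$, rather than the fixed long-LMSD selector.

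Then take an arbitrary moment sequence $h$, and any solution $(a,t)$ of $W^\top E(a,t;h)=0$ with positive weights and distinct nodes in $\Omega$. This solution reproduces $M_0$ and $M_1$ exactly, so the pencil at $h$ is the exact atomic pencil of $(a,t)$. Your Steps 1--2 then show its generalized eigenvalues are exactly the $a_i$. That is the sense in which the pencil is reproduced by a $\mathcal{GD}_m$ member, and it is the step your proposal replaces with a redefinition.
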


\subsection{Design criterion for well-structured spectral candidates}
\label{subsec:summary-equivalence-separation}

We have studied the relations among the four classes introduced above. 
These relations show that if a spectral step-size candidate falls into
the intersection
\[
\mathcal{HDL}_{\mathrm{gen}}^{(1)}
\cap \mathcal{HDL}_{\mathrm{nar}}
\cap \mathcal{GD}_m,
\]
then it inherits all three desirable properties simultaneously:
Gram-type spectral recovery from the rank-one HDL structure,
pseudo-memory implementation from the narrow HDL structure, and
component-energy interpretability from the Gu--Du structure.

Moreover, the remark below shows that  all compatible constructions yield finite termination property when $n=m$.
\begin{remark}[Finite termination property]
\label{rem:full-dimensional-equivalence}
When $m=n$ and $L^k,R^k\in\mathbb R^{n\times n}$ are nonsingular,
$\det(L^{k\top}AR^k-\theta L^{k\top}R^k)
=\det(L^k)\det(R^k)\det(A-\theta I)$,
so the generalized eigenvalues are exactly the eigenvalues of $A$.
If $w$ has nonzero components in all eigenspaces, the Gu--Du nodes
coincide with the eigenvalues of $A$ as well, and all compatible
constructions yield finite termination property in exact arithmetic.
\end{remark}

The discussion above yields a constructive criterion for designing new
spectral step-size candidates, rather than merely classifying existing
methods.  This criterion is both descriptive and prescriptive: it
explains why methods such as the LMSD family are well structured, while
also providing a checklist for designing new spectral step-size rules.
One should construct a positive rank-one finite-moment pencil, verify
its narrow realization, and use the associated moment weights to guide
adaptive selection.  Thus, the positive rank-one finite-moment regime
is the natural setting in which determinant-pencil recoverability,
finite-window implementability, and component-energy interpretability
become mutually compatible.

The structural relations supporting this criterion are summarized in
Tables~\ref{tab:huang-side-relations} and~\ref{tab:gudu-huang-full}.
The former records the independence and overlap of the HDL classes; the latter records the
compatibility of HDL pencils with Gu--Du moment recovery, both in the
general setting and under positive semidefiniteness.

\begin{table}[t]
\caption{Relations among the HDL classes in the limited memory regime.}
\label{tab:huang-side-relations}
\centering
\begin{tabular}{ll}
\toprule
\textbf{Relation or region}  & \textbf{Reference} \\
\midrule
$\mathcal{HDL}_{\mathrm{gen}}^{(1)}\subsetneq\mathcal{HDL}_{\mathrm{gen}}$
  & Definition~\ref{def:Hgen-rank-one} \\
$\mathcal{HDL}_{\mathrm{nar}}\subsetneq\mathcal{HDL}_{\mathrm{gen}}$
  & Definition~\ref{def:Hnar} \\
$\mathcal{HDL}_{\mathrm{nar}}\setminus\mathcal{HDL}_{\mathrm{gen}}^{(1)}\ne\varnothing$
  & Example~\ref{ex:rankone-narrow-independence} \\
$\mathcal{HDL}_{\mathrm{gen}}^{(1)}\setminus\mathcal{HDL}_{\mathrm{nar}}\ne\varnothing$
  & Example~\ref{ex:rankone-narrow-independence} \\
$\mathcal{HDL}_{\mathrm{nar}}\cap\mathcal{HDL}_{\mathrm{gen}}^{(1)}\ne\varnothing$
  & Examples~\ref{ex:special-huang-overlap}, \ref{ex:lmsd-family-overlap} \\
\bottomrule
\end{tabular}
\end{table}

\begin{table}[t]
\caption{Compatibility of HDL classes and Gu--Du moment recovery.}
\label{tab:gudu-huang-full}
\centering
\begin{tabular}{ll}
\toprule
\textbf{Relation or region} & \textbf{Reference} \\
\midrule
\multicolumn{2}{l}{\textit{General setting (without PSD assumption)}} \\
\addlinespace
$\mathcal{GD}_m\cap\mathcal{HDL}_{\mathrm{gen}}
 = \mathcal{GD}_m\cap\mathcal{HDL}_{\mathrm{nar}}$
& Proposition~\ref{prop:gudu-gen-equals-nar} \\
$\mathcal{GD}_m\cap\mathcal{HDL}_{\mathrm{gen}}^{(1)}
 \subseteq \mathcal{GD}_m\cap\mathcal{HDL}_{\mathrm{nar}}$
& \eqref{eq:gudu-rankone-compatible-forces-narrow} \\
$\mathcal{GD}_m\setminus\mathcal{HDL}_{\mathrm{gen}}\ne\varnothing$
& Example~\ref{ex:gudu-skip-e1} \\
$(\mathcal{HDL}_{\mathrm{nar}}\cap\mathcal{HDL}_{\mathrm{gen}}^{(1)})
 \setminus\mathcal{GD}_m\ne\varnothing$
& Example~\ref{ex:special_huang_not_gd_example} \\
\midrule
\multicolumn{2}{l}{\textit{Restricted positive setting (with universal PSD assumption)}} \\
\addlinespace
$\mathcal{GD}_m\cap\mathcal{HDL}_{\mathrm{gen}}
= \mathcal{GD}_m\cap\mathcal{HDL}_{\mathrm{nar}}
= \mathcal{GD}_m\cap\mathcal{HDL}_{\mathrm{gen}}^{(1)}$
& Corollary~\ref{cor:positive-triple-equivalence} \\
$\mathcal{HDL}_{\mathrm{nar}} \cap \mathcal{HDL}_{\mathrm{gen}}^{(1)}
 \subseteq \mathcal{GD}_m$
& Proposition~\ref{prop:psd-nar-genone-in-gudu} \\
\bottomrule
\end{tabular}
\end{table}

\section{Rebound Dynamics and Hanoi-Type Ordering}
\label{sec:hanoi-principle}

The previous section showed how memory-\(m\) constructions extract
several spectral candidates from a finite gradient history.  We now
address the complementary ordering problem: once these candidates are
available, which one should be used next, and how should repeated
step-sizes be scheduled across phases?

The answer is governed by the gradient component dynamics in
\eqref{eq:intro-component-dynamics}.  A step-size targeting one
spectral scale suppresses the corresponding component, but may amplify
components associated with larger eigenvalues.  We call this repeated
reactivation of previously reduced high-frequency components the
\emph{rebound phenomenon}.  It forces high-frequency components to be
settled before lower-frequency components are accurately targeted, and
to be revisited after lower-frequency steps.  In a one-target phase
model, these transition rules yield a
Hanoi-type ordering principle through the classical Hanoi recurrence;
for memory \(m>1\), the same mechanism becomes a controlled scheduling
rule among several Ritz candidates.

\subsection{The rebound phenomenon and Hanoi Ordering Principles}
\label{subsec:rebound-two-rules}

The goal of a gradient method on a quadratic is to reduce each gradient
component \(d_i^k\).  By \eqref{eq:intro-component-dynamics}, a
step-size \(\alpha_k\approx\lambda_j^{-1}\) nearly eliminates
\(d_j^k\) over a phase targeting \(\lambda_j\).  At the same time, any
other component \(d_i^k\) is multiplied by
\(1-\lambda_i/\lambda_j\).  If \(\lambda_i<2\lambda_j\), this factor
has magnitude less than one; if \(\lambda_i\gg\lambda_j\), it has
magnitude greater than one.  Thus, after \(\ell\) consecutive steps
targeting \(\lambda_j\),
\begin{equation}
\label{eq:consecutive-amplification}
    |d_i^{k+\ell}|
    \approx
    \left|1-\frac{\lambda_i}{\lambda_j}\right|^\ell |d_i^k|,
    \qquad \lambda_i>\lambda_j .
\end{equation}
We call this repeated reactivation of previously suppressed
high-frequency components the \emph{rebound phenomenon}.

This observation gives two ordering rules. \textit{First}, large eigenvalues
should be targeted before small ones.  Spectral step-sizes such as BB1
are gradient-magnitude-weighted averages of the eigenvalues; see
\eqref{eq:intro-sd-bb-weighted-average}.  Unsuppressed
large-eigenvalue components dominate these averages and make
small-eigenvalue steps spectrally imprecise. \textit{Second}, large-eigenvalue components must be revisited after
small-eigenvalue steps.  A small-eigenvalue step amplifies
large-eigenvalue components through
\eqref{eq:consecutive-amplification}; in finite precision, machine-level residuals
remain and can be amplified again.  Thus high-frequency clearance is a
recurring obligation, not a one-time initialization.

Let \(\lambda_1\leq\cdots\leq\lambda_n\), with larger eigenvalues
interpreted as higher-frequency components.  To isolate the ordering
effect of rebound, consider a reduced one-target phase model in which
each phase targets one spectral scale and only the rebound transition
rule is retained: targeting \(\lambda_i\) settles component \(i\), but
may reactivate all higher-frequency components \(j>i\).  Hence a
low-frequency phase requires the higher-frequency components to be
cleared beforehand and revisited afterward.

In this one-dimensional phase model, the minimum-phase schedule has the
same recursive optimal structure as the Tower of Hanoi problem: before
a larger disk can be moved, all smaller disks above it must be cleared,
and they must be moved again afterward. This structural coincidence is why we formulate the resulting 
recursion as a \emph{Hanoi-type ordering
principle}, formalized below as the \emph{Hanoi Ordering Principle};
 the term refers to the induced
recurrence, not to an external analogy imposed on the optimization
method.

\begin{proposition}[Hanoi Ordering Principle under rebound constraints]
\label{prop:hanoi-principle}
Let \(0<\lambda_1<\cdots<\lambda_n\) be the eigenvalues of \(A\), and
let \(\delta_i>0\) be target suppression thresholds. Suppose:
\begin{enumerate}
    \item \(|d_i^0|>\delta_i\) for all \(i\).
    \item Each phase targets one unresolved component \(i\) using
    \(\alpha\approx\lambda_i^{-1}\).
    \item A phase targeting \(\lambda_i\) brings \(|d_i|\leq\delta_i\),
    raises \(|d_j|>\delta_j\) for all \(j>i\), and leaves any unresolved
    lower-frequency component \(j<i\) unresolved.
\end{enumerate}
Let \(N_k\) be the minimum number of phases needed to clear
\(d_{n-k+1},\ldots,d_n\) under these transition rules. Then
\begin{equation}
\label{eq:hanoi-recurrence}
    N_k=2N_{k-1}+1,\qquad N_1=1,
    \qquad\text{hence}\qquad
    N_k=2^k-1.
\end{equation}
\end{proposition}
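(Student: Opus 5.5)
The plan is to model the process abstractly as a state machine and prove the recurrence by matching upper and lower bounds, exactly as for the Tower of Hanoi. A state is the set \(U\subseteq\{n-k+1,\ldots,n\}\) of unresolved components among the top \(k\). A phase targeting \(i\in U\) removes \(i\) from \(U\), adds every \(j>i\) to \(U\), and leaves every \(j<i\) unchanged. By hypothesis~1 the initial state is the full block, and the goal is \(U=\varnothing\). One point to settle first: a phase may only target an unresolved component by hypothesis~2. I will interpret \(N_k\) as the minimum over admissible sequences that start from a state in which all \(k\) top components are unresolved. Components below \(n-k+1\) are irrelevant, since targeting them would only reactivate the whole block and never helps. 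I would note this by restricting attention to phases inside the block, or by observing that such phases can be deleted without increasing the count.

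\textbf{Upper bound.} I would prove by induction that the block \(\{n-k+1,\ldots,n\}\) can be cleared in \(2N_{k-1}+1\) phases. First clear the top \(k-1\) components \(\{n-k+2,\ldots,n\}\) using the inductive schedule. This never touches component \(n-k+1\), because all targets are higher and, by rule~3, lower components remain unresolved. Then target \(n-k+1\), which resolves it and reactivates all of \(\{n-k+2,\ldots,n\}\). Finally clear that top block again. The base case \(N_1=1\) holds because targeting \(n\) resolves it and there are no higher components.

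\textbf{Lower bound (main step).} Consider any admissible schedule clearing the block. The lowest component \(n-k+1\) is resolved only by a phase targeting it. Every later phase in the block targets a higher index, so it leaves \(n-k+1\) untouched. Let \(\tau\) be the last phase that targets \(n-k+1\). Immediately after \(\tau\), every component \(j>n-k+1\) is unresolved by rule~3. The phases after \(\tau\) therefore form a schedule that clears the top \(k-1\) block starting from the all-unresolved state, so there are at least \(N_{k-1}\) of them.

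Before \(\tau\), I need another \(N_{k-1}\) phases, and this is where the subtlety lies. Hypothesis~3 only says that targeting \(i\) raises higher components; it does not say the target must be the lowest unresolved component. So I would use a different argument: count phases that target the top block before the first phase targeting \(n-k+1\). This would require showing that a phase targeting \(n-k+1\) is useless unless the top block is already resolved. Under the literal rules that is false, since targeting is allowed whenever the component is unresolved.

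I would therefore make explicit the modelling assumption stated in the text before the proposition: a low-frequency phase ``requires the higher-frequency components to be cleared beforehand''. In other words, targeting \(i\) is admissible (accurate, \(\alpha\approx\lambda_i^{-1}\)) only when all \(j>i\) are resolved, matching the first ordering rule from the weighted-average formula~\eqref{eq:intro-sd-bb-weighted-average}. With this precondition, before \(\tau\) the top block must go from all-unresolved, by hypothesis~1, to all-resolved. That costs at least \(N_{k-1}\) phases, all disjoint from those after \(\tau\). This gives \(N_k\ge 2N_{k-1}+1\).

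Combining the two bounds yields \(N_k=2N_{k-1}+1\) with \(N_1=1\). Solving the recurrence, since \(N_k+1=2(N_{k-1}+1)\), gives \(N_k=2^k-1\). I expect the main obstacle to be formalizing this clearance precondition from hypothesis~2's ``\(\alpha\approx\lambda_i^{-1}\)''.
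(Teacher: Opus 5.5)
Your proposal follows the paper's proof essentially exactly: the upper bound comes from the recursive clear--target--re-clear schedule, and the lower bound from splitting at the final phase that settles $d_{n-k+1}$, which forces $N_{k-1}$ phases on each side. The clearance precondition you make explicit is what the paper's proof uses tacitly (``before $d_{n-k+1}$ can be finally settled, the higher-frequency components \ldots must have been cleared''), and you are right that it does not follow from hypotheses 1--3 as literally stated, since targeting $n-k+1, n-k+2, \ldots, n$ in increasing order would clear the block in only $k$ phases.
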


The proof is given in Appendix~\ref{app:StandardHanoi}.  The recurrence
is the phase-count expression of the rebound constraint: each
low-frequency phase must be preceded and followed by a clearance of the
higher-frequency components.  Classical BB methods do not explicitly
solve this scheduling problem, but their observed cyclic behavior
follows the same rebound mechanism.

Figure~\ref{fig:hanoi-combined} illustrates the Hanoi-type ordering for
three spectral scales and its manifestation in BB1.

\begin{figure}[!htbp]
\centering

\begin{subfigure}[b]{0.48\textwidth}
\centering
\resizebox{\textwidth}{!}{%
% ------------------------------------------------------------------
% Keep your original TikZ code here.
% ------------------------------------------------------------------
\begin{tikzpicture}[
    every node/.style={font=\fontsize{8}{9}\selectfont\sffamily}
]
  \def\pegH{1.60}
  \def\pegW{0.10}
  \def\baseH{0.15}
  \def\baseW{0.95}
  \def\cS{2.10}
  \def\dH{0.22}
  \def\dWa{0.32}
  \def\dWb{0.48}
  \def\dWc{0.66}
  \def\FX{7.40}
  \def\FYY{2.80}

  \newcommand{\Peg}[2]{%
    \fill[draw=gray!55!black, fill=gray!28, rounded corners=1pt]
         (#1-\pegW,#2) rectangle (#1+\pegW,#2+\pegH);
    \fill[draw=gray!45!black, fill=gray!20, rounded corners=2pt]
         (#1-\baseW,#2-\baseH) rectangle (#1+\baseW,#2);
  }

  \newcommand{\DiskAt}[4]{%
    \node[draw=#4!70!black,
          fill=#4!85,
          rounded corners=3pt,
          minimum width=2*#3 cm,
          minimum height=\dH cm,
          anchor=south]
      at (#1, #2) {};
  }

  \newcommand{\Disk}[3]{%
    \ifnum#3=1 \DiskAt{#1}{#2*\dH}{\dWa}{colA} \fi
    \ifnum#3=2 \DiskAt{#1}{#2*\dH}{\dWb}{colB} \fi
    \ifnum#3=3 \DiskAt{#1}{#2*\dH}{\dWc}{colC} \fi
  }

  \newcommand{\FLabel}[3]{%
    \node[font=\fontsize{7.5}{9}\selectfont\bfseries\sffamily,
          text=black!55]
         at (#1,#2) {#3};
  }

  \begin{scope}[xshift=0cm, yshift=0cm]
    \foreach \px in {0, \cS, 2*\cS} { \Peg{\px}{0} }
    \FLabel{\cS}{\pegH+0.38}{Start}
    \Disk{0}{0}{3} \Disk{0}{1}{2} \Disk{0}{2}{1}
  \end{scope}

  \begin{scope}[xshift=0cm, yshift=-1*\FYY cm]
    \foreach \px in {0, \cS, 2*\cS} { \Peg{\px}{0} }
    \FLabel{\cS}{\pegH+0.38}{Step 1}
    \Disk{0}{0}{3} \Disk{0}{1}{2} \Disk{\cS}{0}{1}
  \end{scope}

  \begin{scope}[xshift=0cm, yshift=-2*\FYY cm]
    \foreach \px in {0, \cS, 2*\cS} { \Peg{\px}{0} }
    \FLabel{\cS}{\pegH+0.38}{Step 2}
    \Disk{0}{0}{3} \Disk{\cS}{0}{1} \Disk{2*\cS}{0}{2}
  \end{scope}

  \begin{scope}[xshift=0cm, yshift=-3*\FYY cm]
    \foreach \px in {0, \cS, 2*\cS} { \Peg{\px}{0} }
    \FLabel{\cS}{\pegH+0.38}{Step 3}
    \Disk{0}{0}{3} \Disk{2*\cS}{0}{2} \Disk{2*\cS}{1}{1}
  \end{scope}

  \begin{scope}[xshift=\FX cm, yshift=0cm]
    \foreach \px in {0, \cS, 2*\cS} { \Peg{\px}{0} }
    \FLabel{\cS}{\pegH+0.38}{Step 4}
    \Disk{\cS}{0}{3} \Disk{2*\cS}{0}{2} \Disk{2*\cS}{1}{1}
  \end{scope}

  \begin{scope}[xshift=\FX cm, yshift=-1*\FYY cm]
    \foreach \px in {0, \cS, 2*\cS} { \Peg{\px}{0} }
    \FLabel{\cS}{\pegH+0.38}{Step 5}
    \Disk{0}{0}{1} \Disk{\cS}{0}{3} \Disk{2*\cS}{0}{2}
  \end{scope}

  \begin{scope}[xshift=\FX cm, yshift=-2*\FYY cm]
    \foreach \px in {0, \cS, 2*\cS} { \Peg{\px}{0} }
    \FLabel{\cS}{\pegH+0.38}{Step 6}
    \Disk{0}{0}{1} \Disk{\cS}{0}{3} \Disk{\cS}{1}{2}
  \end{scope}

  \begin{scope}[xshift=\FX cm, yshift=-3*\FYY cm]
    \foreach \px in {0, \cS, 2*\cS} { \Peg{\px}{0} }
    \FLabel{\cS}{\pegH+0.38}{Step 7}
    \Disk{\cS}{0}{3} \Disk{\cS}{1}{2} \Disk{\cS}{2}{1}
  \end{scope}

  \begin{scope}[yshift={-3*\FYY cm - 1.10cm}, xshift=-1.00cm]
    \DiskAt{0.50}{0}{\dWc}{colC}
    \node[anchor=west, font=\fontsize{8}{9}\selectfont\sffamily] at (1.52, 0.05)
         {Disk~1:\enspace$\lambda_1$ (low-freq)};
    \DiskAt{5.30}{0}{\dWb}{colB}
    \node[anchor=west, font=\fontsize{8}{9}\selectfont\sffamily] at (6.12, 0.05)
         {Disk~2:\enspace$\lambda_2$ (mid-freq)};
    \DiskAt{10.00}{0}{\dWa}{colA}
    \node[anchor=west, font=\fontsize{8}{9}\selectfont\sffamily] at (10.62, 0.05)
         {Disk~3:\enspace$\lambda_3$ (high-freq)};
  \end{scope}
\end{tikzpicture}
}
\caption{Hanoi-type ordering for \(n=3\) spectral scales. Small disks
correspond to large eigenvalues, and large disks correspond to small
eigenvalues. The complete recursive schedule requires
\(N_3=2^3-1=7\) phases.}
\label{fig:hanoi-a}
\end{subfigure}
\hfill
\begin{subfigure}[b]{0.48\textwidth}
\centering
\includegraphics[height=5.2cm, width=\textwidth]{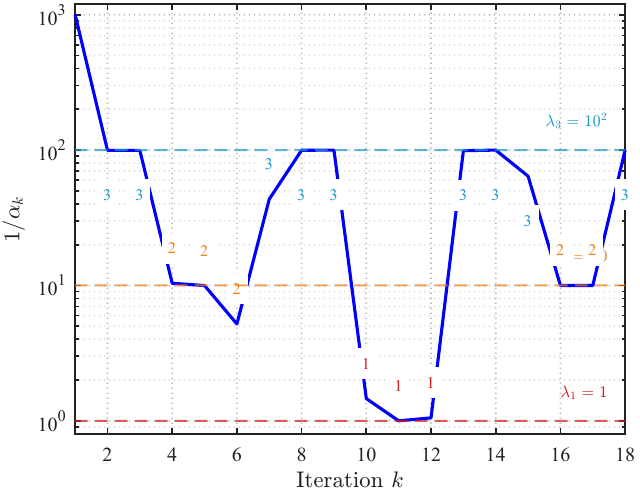}
\caption{Evolution of \(1/\alpha_k\) during the first 18 iterations of
BB1 on \(A=\operatorname{diag}(1,10,100)\) with
\(x^0=(1,\,2.6,\,3.6)^\top\). The trajectory exhibits the
high-frequency revisit pattern predicted by the rebound mechanism.}
\label{fig:hanoi-b}
\end{subfigure}

\caption{Illustration of the Hanoi-type rebound ordering and its
numerical manifestation.}
\label{fig:hanoi-combined}
\end{figure}

\subsection{Block scheduling and practical memory-\(m\) ordering}
\label{subsec:block-hanoi}

The one-target model gives an exact Hanoi-type recurrence for a
one-dimensional spectral phase chain.  With memory \(m>1\), several
Ritz candidates are available at each refresh, so the problem is no
longer a literal Hanoi dynamics.  The Hanoi-type principle instead
becomes a \textit{controlled scheduling rule}: choose which accessible component
to target next and when to refresh the spectral information.

We first consider an ideal block model in which a memory-\(m\) method
settles an entire block of \(m\) spectral components before refreshing
the spectral information.  Partition the \(n\) components into
\(K=\lceil n/m\rceil\) ordered groups, where the lowest-frequency group
has size \(r=n-(K-1)m\) and the remaining groups have size \(m\).
Applying the Hanoi recursion at the block level gives the following
count.

\begin{corollary}[Block Hanoi Ordering Principle]
\label{cor:block-hanoi}
Under the assumptions of Proposition~\ref{prop:hanoi-principle} applied
to groups of \(m\) eigenvalues, let \(K=\lceil n/m\rceil\) and
\(r=n-(K-1)m\). If each block is fully settled before proceeding to the
next block within this idealized block model, then the number of blocks
\(C_k\) needed to clear the \(k\) highest-frequency groups satisfies
\(C_k=2C_{k-1}+1\), \(C_1=1\), hence \(C_k=2^k-1\). The total number of
individual phases is
\begin{equation}
\label{eq:block-hanoi-step-count}
    T_n=m(2^K-2)+r.
\end{equation}
\end{corollary}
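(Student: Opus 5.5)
The plan is to treat each frequency group as a single ``super-component'' and re-run the argument of Proposition~\ref{prop:hanoi-principle} at the block level. I then count individual phases by tracking how often each group is visited. Order the groups \(B_1,\ldots,B_K\) from highest to lowest frequency. Then \(B_1,\ldots,B_{K-1}\) each contain \(m\) eigenvalues and \(B_K\) is the lowest-frequency group, of size \(r=n-(K-1)m\).

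I would first verify that the block transition rules inherit the hypotheses of Proposition~\ref{prop:hanoi-principle}. Settling block \(B_k\) means running phases with \(\alpha\approx\lambda_i^{-1}\) for every \(i\in B_k\). By hypothesis~3, applied to the lowest eigenvalue in \(B_k\), this reactivates every component of higher frequency, so all of \(B_1,\ldots,B_{k-1}\) become unresolved. It leaves lower-frequency blocks unresolved. Within the idealized model each block is fully settled before the next is touched, so a block-level ``phase'' is an indivisible unit. The block dynamics therefore satisfy exactly the one-target rules, with blocks in place of components.

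Next I would establish the block recurrence. The argument follows the proof of \eqref{eq:hanoi-recurrence}, applied to the \(k\) highest-frequency groups. For the upper bound, clear \(B_1,\ldots,B_{k-1}\) (\(C_{k-1}\) blocks), settle \(B_k\) (one block), and then clear \(B_1,\ldots,B_{k-1}\) again, since they were reactivated (\(C_{k-1}\) blocks). For the lower bound, consider the last time \(B_k\) is settled. Before it, the higher groups must have been cleared at least once, because otherwise the imprecision rule forbids accurate targeting. After it, they are unresolved and must be cleared again. Each of these clearances costs at least \(C_{k-1}\) by induction. Hence \(C_k=2C_{k-1}+1\) with \(C_1=1\), giving \(C_k=2^k-1\).

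Finally, I would count individual phases. In the optimal recursion, unrolling shows that block \(B_j\) is visited exactly \(2^{K-j}\) times. A visit to \(B_j\) with \(j\le K-1\) costs \(m\) phases, while the single visit to \(B_K\) costs \(r\) phases. Therefore
\[
T_n=m\sum_{j=1}^{K-1}2^{K-j}+r=m(2^K-2)+r.
\]

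The main obstacle is the lower-bound half of the block recurrence. I need the rebound rule, stated per component, to force a full re-clearance of every higher block, and I must rule out interleaving phases across blocks. I would handle this by appealing explicitly to the idealized assumption that blocks are atomic. Once that assumption is in place, the minimality argument from the proof of \eqref{eq:hanoi-recurrence} in Appendix~\ref{app:StandardHanoi} transfers verbatim.
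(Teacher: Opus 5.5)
Your proposal is correct and takes essentially the same route as the paper: treat each group as a single component in Proposition~\ref{prop:hanoi-principle} to get $C_k=2C_{k-1}+1$, then count how often each block is visited in the optimal schedule. The only difference is cosmetic. You index blocks from highest frequency, with $B_j$ visited $2^{K-j}$ times, whereas the paper indexes from lowest frequency, with block $j$ visited $2^{j-1}$ times; both give $T_n=m(2^K-2)+r$.
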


The proof is given in Appendix~\ref{app:BlockHanoi}.
Figure~\ref{fig:block-hanoi-all} illustrates the block-Hanoi dynamics
for four eigenvalues grouped as
\(\{\lambda_1,\lambda_2\}\) and \(\{\lambda_3,\lambda_4\}\).  The
Krylov--Ritz method with \(m=2\) requires 3 blocks and 6 phases,
matching \(C_2=3\) and \(T_4=6\).  BB1 completes 13 phases and 45
gradient steps, close to but below the one-target bound \(N_4=15\)
because some rebounds do not reactivate all higher-frequency
components.

\begin{figure}[!htbp]
\centering

\begin{subfigure}[t]{0.32\textwidth}
    \centering
    \includegraphics[width=\linewidth]{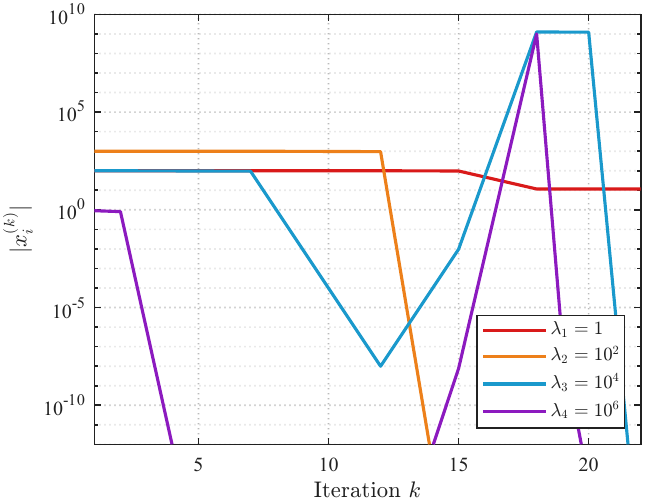}
    \caption{Error components.}
    \label{fig:block-hanoi-a}
\end{subfigure}%
\hfill
\begin{subfigure}[t]{0.32\textwidth}
    \centering
    \includegraphics[width=\linewidth]{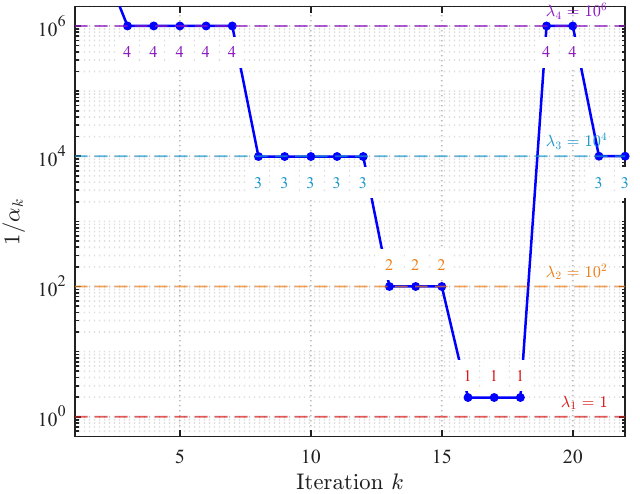}
    \caption{Inverse step-sizes \(1/\alpha_k\).}
    \label{fig:block-hanoi-b}
\end{subfigure}%
\hfill
\begin{subfigure}[t]{0.32\textwidth}
    \centering
    \includegraphics[width=\linewidth]{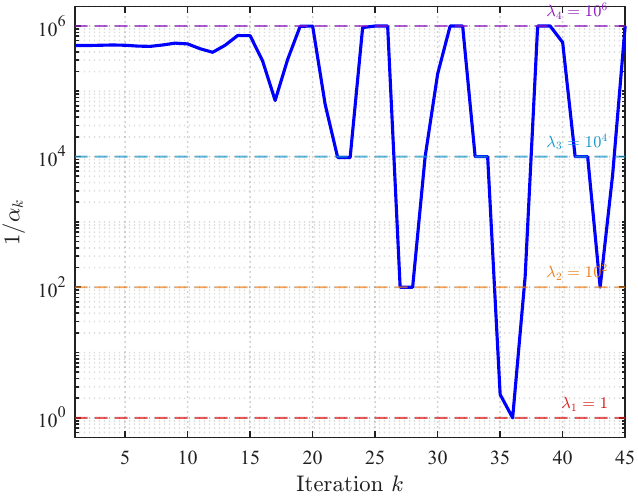}
    \caption{Inverse step-sizes for BB1.}
    \label{fig:block-hanoi-c}
\end{subfigure}

\caption{A toy example for the block Hanoi-type recursion. The
parameters are \(A=\operatorname{diag}(1,10^2,10^4,10^6)\),
\(x^0=(100,1000,100,1)^\top\), and the stopping criterion is
\(\|g^k\|\leq10^{-5}\|g^0\|\).}
\label{fig:block-hanoi-all}
\end{figure}

The complete-block model is idealized.  In practice, exhausting a block
with stale Ritz values can be unstable, because low-frequency steps may
strongly amplify higher-frequency components.
Figure~\ref{fig:block-hanoi-explode-norm} shows this effect for a
six-dimensional example with \(m=2\): the gradient norm surges beyond
\(10^{30}\) once the lowest-frequency group
\(\{\lambda_1,\lambda_2\}\) is targeted under fixed cycling.

\begin{figure}[!htbp]
\centering

\begin{subfigure}[t]{0.48\textwidth}
    \centering
    \includegraphics[width=\linewidth]{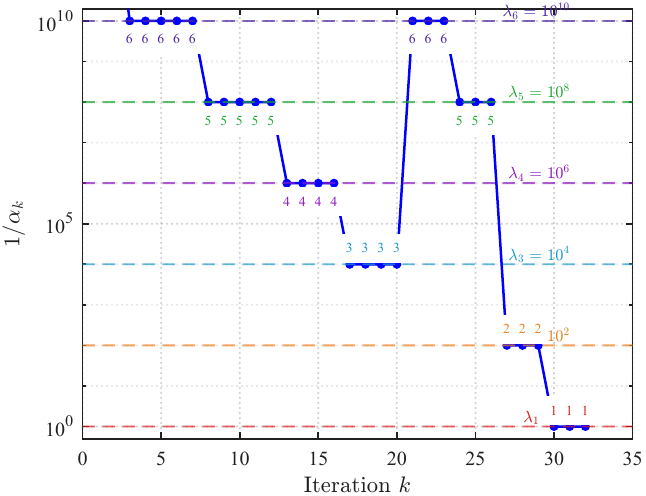}
    \caption{Fixed-cycle block exhaustion: inverse step-sizes
    \(1/\alpha_k\). Ritz values drift from high-frequency toward
    low-frequency scales as dominant components are eliminated.}
    \label{fig:block-hanoi-explode-a}
\end{subfigure}%
\hfill
\begin{subfigure}[t]{0.48\textwidth}
    \centering
    \includegraphics[width=\linewidth]{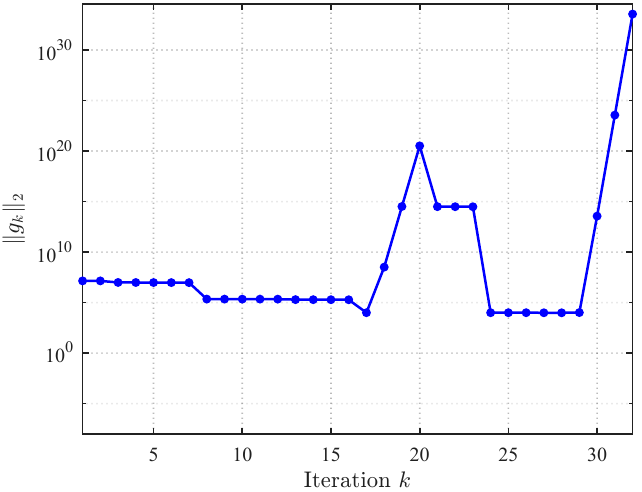}
    \caption{Gradient norm remains well behaved during the
    high-frequency stages but surges beyond \(10^{30}\) once the
    low-frequency group is targeted.}
    \label{fig:block-hanoi-explode-b}
\end{subfigure}

\caption{Failure of fixed block exhaustion under rebound. The
parameters are
\(A=\operatorname{diag}(1,10^2,10^4,10^6,10^8,10^{10})\),
\(x^0=(10^4,10,20,0.1,0.1,10^{-3})^\top\), and \(m=2\).}
\label{fig:block-hanoi-explode-norm}
\end{figure}

The complete-block model is therefore too rigid for practice.  We
instead settle one phase, refresh the spectral information, and select
again.  In this recomputed memory-\(m\) model, Hanoi Ordering Principle acts
as a controlled scheduling rule rather than a fixed block-transfer
recursion.

\begin{proposition}[Practical Hanoi Ordering Principle]
\label{prop:practical-hanoi}
Retain the notation and assumptions of
Proposition~\ref{prop:hanoi-principle}. Consider the memory-\(m\) block
model in which, at each phase, at most \(m\) consecutive unresolved
spectral components are simultaneously available as phase targets.
For an \(n\)-component subproblem with \(n>m\), the initial available
block contains the \(m\) highest-frequency unresolved components. After
each phase, the block is refreshed. A single phase can advance the
lowest-frequency accessible boundary by at most one spectral level, and
selecting the lowest-frequency component in the current block attains
such an advance whenever a lower-frequency unresolved component remains.

A phase ends as soon as its selected component satisfies
\(|d_i^k|\leq\delta_i\). Let \(f_m(n)\) denote the minimum number of
phases required to bring \(|d_i^k|\leq\delta_i\) for all
\(i=1,\ldots,n\). Then, for \(n>m\),
\begin{equation}
\label{eq:memory-m-recursion}
    f_m(n)=f_m(n-1)+f_m(n-m)+1,
\end{equation}
with
\begin{equation}
    f_m(k)=k,
    \qquad
    0\leq k\leq m.
\end{equation}
Moreover,
\begin{equation}
    f_m(n)=\Theta(\rho_m^{\,n}),
\end{equation}
where \(\rho_m\in(1,2]\) is the unique positive real root of
\begin{equation}
    x^m=x^{m-1}+1.
\end{equation}
In particular,
\begin{equation}
    \rho_1=2,
    \qquad
    f_1(n)=N_n=2^n-1,
\end{equation}
and \(\rho_m\) is strictly decreasing in \(m\), with
\begin{equation}
    \rho_m\to1
    \qquad\text{as}\qquad
    m\to\infty .
\end{equation}
\end{proposition}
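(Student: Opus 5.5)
The proof has two parts: a combinatorial argument for the recursion \eqref{eq:memory-m-recursion} within the stated phase model, and an elementary linear-recurrence analysis for the growth rate.

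\emph{Recursion.} We argue by strong induction on \(n\), following the pattern of Proposition~\ref{prop:hanoi-principle}.

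For \(0\le k\le m\), all unresolved components lie in one accessible block. Clearing them from high to low frequency costs one phase each. By rule~3, a phase targeting \(\lambda_i\) never disturbs lower-frequency components and only re-raises higher ones, so this order causes no rebound. Each phase settles at most one component, so \(k\) phases are also necessary. Hence \(f_m(k)=k\).

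For \(n>m\), consider the first phase that settles the lowest-frequency component \(d_1\).
\begin{itemize}
\item \textbf{Before it.} Component \(1\) must lie in the accessible block. Since the boundary advances by at most one level per phase, the top \(n-m\) components must already have been cleared so that the window reaches level \(1\). Concretely, the window \(\{1,\dots,m\}\) is available only once components \(m+1,\dots,n\) are resolved, and this clearing is an \((n-m)\)-component subproblem. I will argue that it costs at least \(f_m(n-m)\) phases, because the lower components play no role in it.
\item \textbf{The phase itself.} Targeting \(\lambda_1\) costs one phase.
\item \textbf{After it.} By rule~3, all of \(2,\dots,n\) are re-raised. This leaves an \((n-1)\)-component subproblem on levels \(2,\dots,n\), costing \(f_m(n-1)\) phases. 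Component \(1\) is never disturbed again, since higher-frequency phases leave lower-frequency components alone.
\end{itemize}
Summing the three parts gives the lower bound \(f_m(n)\ge f_m(n-m)+1+f_m(n-1)\). The same three-stage strategy, which always selects the lowest-frequency component in the current block, attains this count. This gives the matching upper bound and hence equality.

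The main obstacle is the lower bound. One must justify that an optimal schedule can be decomposed at the \emph{last} time \(d_1\) is settled, and that phases spent on components \(2,\dots,m\) before that time are wasted, since the \(\lambda_1\) phase re-raises them anyway. I plan an exchange argument: any such phase can be deleted without increasing the count. The required monotonicity \(f_m(j)\le f_m(j+1)\) follows from the recursion by induction.

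\emph{Growth rate.} The recursion is a linear inhomogeneous recurrence. Setting \(g(n)=f_m(n)+1\) removes the inhomogeneity, giving \(g(n)=g(n-1)+g(n-m)\) with characteristic polynomial \(x^m-x^{m-1}-1\).

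Let \(p(x)=x^m-x^{m-1}-1\). Descartes' rule gives exactly one positive root \(\rho_m\). Since \(p(1)=-1<0\) and \(p(2)=2^{m-1}-1\ge0\), we have \(\rho_m\in(1,2]\).

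To show \(\rho_m\) dominates, suppose \(z\) is a root with \(|z|\ge\rho_m\). Then \(|z|^{m-1}|z-1|=1\). Since \(|z-1|\ge|z|-1\), we get \(|z|^{m-1}(|z|-1)\le1\), which forces \(|z|\le\rho_m\). Equality in \(|z-1|=|z|-1\) requires \(z\) to be real and positive, so \(z=\rho_m\) and every other root has strictly smaller modulus. The polynomial is separable because \(\gcd(p,p')=1\); alternatively, simplicity of the dominant root suffices.

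The initial values \(g(k)=k+1>0\) are positive and the sequence is increasing. Therefore the coefficient of \(\rho_m^n\) in the solution is positive, which gives \(f_m(n)=\Theta(\rho_m^n)\).

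For \(m=1\), the polynomial is \(x-1-1\), so \(\rho_1=2\). The recursion becomes \(f_1(n)=2f_1(n-1)+1\) with \(f_1(1)=1\), which is \eqref{eq:hanoi-recurrence}.

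For monotonicity in \(m\), evaluate at \(\rho=\rho_m\):
\[
p_{m+1}(\rho_m)=\rho_m^{m}(\rho_m-1)-1=\rho_m\cdot1-1>0.
\]
Since \(p_{m+1}\) is negative on \((0,\rho_{m+1})\) and positive beyond it, this gives \(\rho_{m+1}<\rho_m\).

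Finally, suppose \(\rho_m\ge1+\varepsilon\) for all \(m\). Then \(\rho_m^{m-1}(\rho_m-1)\ge(1+\varepsilon)^{m-1}\varepsilon\to\infty\), contradicting \(\rho_m^{m-1}(\rho_m-1)=1\). Hence \(\rho_m\to1\).
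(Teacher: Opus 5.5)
Your overall route is the paper's. You use the same three-stage decomposition: an $(n-m)$-component preparatory subproblem, one phase on $\lambda_1$, then an $(n-1)$-component subproblem in which $\lambda_1$ is never disturbed. You also use the same characteristic polynomial and the same identity $\rho_m^{m-1}(\rho_m-1)=1$ for $\rho_m\downarrow 1$. Your modulus argument for root dominance and your evaluation $p_{m+1}(\rho_m)=\rho_m-1>0$ are valid elementary substitutes for the paper's appeal to Perron--Frobenius and its two-variable monotonicity argument.

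There is, however, a concrete error in your base case. You clear the $k\le m$ accessible components ``from high to low frequency'' at one phase each. Under rule~3, a phase on $\lambda_i$ re-raises every component $j>i$. With your order, each lower-frequency phase therefore undoes all the higher ones already cleared, so after $k$ phases only the lowest component is resolved; this is exactly the Hanoi rebound. The rebound-free $k$-phase schedule is lowest-frequency first. That is what your own justification (``never disturbs lower-frequency components'') actually supports, and what your recursive strategy for $n>m$ does. Reverse the order and the base case $f_m(k)=k$ holds.

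Two smaller points.

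\textbf{Positivity of the leading coefficient.} The facts ``$g(k)>0$ and increasing'' do not by themselves rule out a zero coefficient on $\rho_m^n$. The cleanest fix is the two-sided comparison $a\rho_m^n\le g(n)\le b\rho_m^n$, where $a=\min_{1\le k\le m}g(k)\rho_m^{-k}$ and $b=\max_{1\le k\le m}g(k)\rho_m^{-k}$. It follows by induction from $\rho_m^n=\rho_m^{n-1}+\rho_m^{n-m}$; note that the recurrence for $n>m$ only uses $g(1),\ldots,g(m)$. This gives $f_m(n)=\Theta(\rho_m^n)$ directly and makes the dominance analysis unnecessary.

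\textbf{The exchange argument for the lower bound.} The paper does not carry this out; it simply asserts that $\lambda_1$ cannot become available before an $(n-m)$-component preparatory subproblem is resolved. If you do carry it out, a phase on some $\lambda_i$ with $2\le i\le m$ is not always deletable. Suppose the block is read as the $m$ highest-frequency unresolved components, and take $m=3$, $n=4$. Then targeting $\lambda_2$ is itself what brings $\lambda_1$ into the block, and deleting that phase leaves $\lambda_1$ inaccessible. The same example shows that components $m+1,\ldots,n$ need not all be resolved first. Such a phase should be exchanged for one on a top-level component (here $\lambda_4$) rather than deleted; the count $f_m(n-m)$ is unchanged.
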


The proof is given in Appendix~\ref{app:practicalHanoi}.

\begin{remark}
\label{rem:explicit-rates}
For \(m=2\), the characteristic equation \(x^2=x+1\) gives
\(\rho_2=(1+\sqrt{5})/2\approx1.618\), whereas the ideal block model
has growth rate \(2^{1/2}\approx1.414\).  For \(m=3\), the equation
\(x^3=x^2+1\) gives \(\rho_3\approx1.3247\), while the ideal block
rate is \(2^{1/3}\approx1.260\).  Thus practical memory reduces the
phase-count growth rate relative to the one-target Hanoi sequence, and
ideal block exhaustion gives an even smaller asymptotic rate.
\end{remark}

The algorithmic paradigm suggested by
Proposition~\ref{prop:practical-hanoi} is to maintain a memory window
of \(m\) Ritz values, select the next target according to the recursive
priority encoded by \eqref{eq:memory-m-recursion}, execute one phase,
and then refresh the spectral approximation before selecting again.
In practice, moreover, the quality of the eigenvalue approximations, the
number of gradient steps per phase, and the severity of rebound make
both target selection and phase length non-trivial.  The next section
turns this rebound-induced priority into an adaptive algorithm.

\section{Adaptive Hanoi-Type Ordering Algorithm}
\label{sec:algorithm}

Section~\ref{sec:hanoi-principle} establishes the rebound-induced
Hanoi-type priority in the ideal phase model: high-frequency components
should be settled before lower-frequency components and may need to be
revisited after lower-frequency steps. In a finite-memory method,
however, the full spectral decomposition is unavailable; at each
refresh, only a small set of Ritz candidates and their associated
component energies can be recovered.

This section turns that principle into a practical memory-$m$
algorithm. Two questions must be answered: given the $m$ Ritz
candidates, which one should be targeted at each phase, and once a
candidate is selected, for how many consecutive iterations should its
step-size be reused before the reduced model is refreshed. We answer
these questions by combining an energy-based Hanoi-type ordering with
an adaptive phase-length estimator. Without loss of generality, the
analysis below focuses on Long LMSD for eigenvalue computation.

\subsection{Weighted component energy and admissibility filter}
\label{subsec:ranking}

Given $m$ Ritz values, we must choose which candidate to target.
We favor candidates that are accurate, energetic, and high-frequency.
Accuracy reduces the number of consecutive steps needed before the
corresponding spectral component is sufficiently reduced, thereby
limiting the rebound phenomenon described in
Section~\ref{subsec:rebound-two-rules}.

Following Gu and Du~\cite{gu2021modified}, a larger component energy
$t_i^k$ implies a smaller gap $\Delta\theta_i^k$.
Denoting the Ritz residual by
$r_i^k = Av_i^k - \theta_i^k v_i^k$,
the energy--residual bound of Appendix~\ref{app:energy-residual} gives
\begin{equation}
\label{eq:energy-residual}
    \|r_i^k\|_2^2
    \;\leq\;
    (\beta_{m,\,\mathrm{ind_r}}^k)^2
    \!\left(1 - \frac{t_i^{k,\,\mathrm{ind_r}}}
    {\|g^{k-m+\mathrm{ind_r}-1}\|_2^2}\right),
\end{equation}
where $\beta_{m,\,\mathrm{ind_r}}^k$ is the last Lanczos subdiagonal
coefficient of the Lanczos process on $V^k$ started at
$g^{k-m+\mathrm{ind_r}-1}$.
A larger $t_i^{k,\,\mathrm{ind_r}}$ tightens the right-hand side, which
is consistent with Gu and Du's conclusion.

Here $t_i^{k,\,\mathrm{ind_r}}$ generalizes the component energy
$t_i^k$ introduced in Section~\ref{subsec:moment-hankel-narrow}.
Setting $t_i^{k,1} = t_i^k$, define
\begin{equation}
\label{eq:component-energy-general}
    t_i^{k,\,\mathrm{ind_r}}
    \;=\;
    t_i^{k,1}\cdot
    \prod_{j=1}^{\mathrm{ind_r}-1}
    \bigl(1 - \alpha_{k-m+j-1}\,\theta_i^k\bigr)^2,
    \qquad
    \mathrm{ind_r} \in \{1,\ldots,m+1\}.
\end{equation}
This quantity measures how much energy the Ritz direction $v_i^k$
retains across the history window.

When two candidates are nearly accurate, a larger
$\theta_i^k$ is preferable. A larger Ritz value $\theta_i^k$ reduces  the
relative approximation error $\Delta\theta_i^k / \theta_i^k$ directly
through the denominator, where
$\Delta\theta_i^k = |\theta_i^k - \lambda_i|$ denotes the gap to the
nearest eigenvalue.
As an additional benefit, targeting a larger spectral component
suppresses the rebound phenomenon upon phase completion
(Section~\ref{subsec:rebound-two-rules}), providing a built-in
safety margin.

Both considerations point in the same direction: the ranking should be
monotone increasing in both $\theta_i^k$ and
$t_i^{k,\,\mathrm{ind_r}}$.
We therefore define the \emph{weighted component energy}
\begin{equation}
\label{eq:weighted-component-energy}
    \eta_i^k
    \;=\;
    (\theta_i^k)^{r_\theta}\,
    (t_i^{k,\,\mathrm{ind_r}})^{r_t},
    \qquad r_\theta,\, r_t > 0,
\end{equation}
where the exponents $r_\theta$ and $r_t$ are tuned independently.

Similarly to Gu and Du~\cite{gu2021modified}, we impose a threshold
\begin{equation}
\label{eq:admissibility-threshold}
\begin{aligned}
    \omega_k
    &\;=\;
    \frac{1+\gamma}{2}
    \cdot
    \frac{(g^{k-\bar{m}_k+1} - g^{k-\bar{m}_k})^\top
          (x^{k-\bar{m}_k+1} - x^{k-\bar{m}_k})}
         {\|x^{k-\bar{m}_k+1} - x^{k-\bar{m}_k}\|_2^2}
    \\[6pt]
    &\;=\;
    \frac{1+\gamma}{2}
    \cdot
    \frac{\|g^{k-\bar{m}_k}\|_2^2
          - (g^{k-\bar{m}_k+1})^\top g^{k-\bar{m}_k}}
         {\alpha_{k-\bar{m}_k}\,\|g^{k-\bar{m}_k}\|_2^2},
\end{aligned}
\end{equation}
where $\bar{m}_k \leq m$ is the effective window rank,
$\gamma\in(0,1]$ is a safeguard parameter, and the second equality
uses $x^{j+1}-x^j=-\alpha_j g^j$ and is thus recoverable from the
pseudo-memory method.

The \emph{admissible set} is
\begin{equation}
\label{eq:admissible-set}
    \mathcal{I}_k
    =
    \{i : \theta_i^k \geq \omega_k\}.
\end{equation}
By~\cite{gu2021modified}, the largest Ritz value satisfies
$\theta_{\bar{m}_k}^k \geq \omega_k$. Hence
$\mathcal{I}_k \neq \emptyset$, and the selector always has at least
one admissible Ritz candidate in exact arithmetic.

The admissibility filter serves as the spectral safeguard used in
Section~\ref{sec:convergence_analysis}: it prevents the reciprocal
step-size from becoming too small. This safeguarding philosophy of
imposing upper bounds on step-sizes also appears in
\cite{Burdakov_2019,huang2021equipping,zhang2024cyclic}. Moreover, classical step-size rules with proven R-linear convergence, such as BB, Yuan and NY, automatically satisfy this admissibility condition.

\subsection{Settlement threshold and phase-length estimator}
\label{subsec:visibility}

Once a candidate $\theta_{i_k}^k$ is selected, the algorithm must
decide how long to continue at the same step-size. The decision rests
on a \emph{settlement condition}: the selected component must have its
energy reduced below a threshold
$\tau_i^{k,\,\mathrm{ind_s}}$ before the candidate is retired.

Let $\epsilon=\Theta_{\mathrm{tol}}\|g^0\|_2$ denote the absolute
stopping tolerance. The immediate goal is to reduce the monitored
energy of the $i$-th component below the per-component budget
$\epsilon^2/n$, obtained by distributing the global stopping criterion
$\|g^k\|_2^2 \leq \epsilon^2$ equally among all $n$ components.
However, this flat budget does not account for rebound.

\textit{Rebound prevention.}
After the algorithm settles component $i$ and switches to a
lower-frequency candidate $\theta_j^k < \theta_i^k$, the energy of
component $i$ is amplified by a factor
$(\theta_i^k/\theta_j^k)^2$; see
\eqref{eq:consecutive-amplification}. Thus, before the algorithm moves
to lower-frequency candidates, larger Ritz values must be settled to a
stricter threshold.

To account for the worst lower-frequency scale currently available,
define
\begin{equation}
\label{eq:theta-lower}
    \theta_{\min}^k
    \;=\;
    \min\Bigl\{\min_{0 \leq j < k} \alpha_j^{-1},\;
    \theta_1^k\Bigr\}.
\end{equation}

\textit{Spectral visibility.}
From~\eqref{eq:intro-sd-bb-weighted-average}, BB1-type reciprocal
step-sizes are weighted averages of eigenvalues, so a large Ritz value
$\theta_i^k$ amplifies its contribution and masks smaller spectral
scales even after component $i$ is nearly settled. For the smaller
scales to become visible, the energy of component $i$ must be
sufficiently suppressed.

In parallel with Section~\ref{subsec:ranking}, where
$t_i^{k,\,\mathrm{ind_r}}$ was used for ranking, we monitor the
settlement condition via $t_i^{k,\,\mathrm{ind_s}}$, with
$\mathrm{ind_s} \in \{1,\ldots,m+1\}$ a separately tunable index.
The corresponding \emph{Hanoi-adjusted settlement threshold} is
\begin{equation}
\label{eq:hanoi-threshold}
    \tau_i^{k,\,\mathrm{ind_s}}
    \;=\;
    \frac{\epsilon^2}{n}\,
    \left(\frac{\theta_{\min}^k}{\theta_i^k}\right)^r,
    \qquad r \geq 2.
\end{equation}
The exponent $r$ controls the strength of rebound prevention: a larger
$r$ imposes a stricter settlement requirement for high-frequency
candidates. Once
$t_i^{k,\,\mathrm{ind_s}} < \tau_i^{k,\,\mathrm{ind_s}}$, the
contribution of component $i$ to the step-size is sufficiently reduced
to allow lower-frequency components to become visible in subsequent
phases.

\textit{Phase-length estimator.}
Given $\tau_i^{k,\,\mathrm{ind_s}}$, the predicted phase length is
derived as follows. Each step at $\alpha = (\theta_i^k)^{-1}$ reduces
$t_i^{k,\,\mathrm{ind_s}}$ by a factor $\rho_i^k \in (0,1)$, estimated
from the relative error
\begin{equation}
\label{eq:relative-ritz-uncertainty}
    \rho_i^k
    \;=\;
    \frac{|\Delta\theta_i^k|}{\theta_i^k},
\end{equation}
where the estimation of $\Delta\theta_i^k$ is addressed in
Section~\ref{subsec:delta-theta}. When
$t_i^{k,\,\mathrm{ind_s}} > \tau_i^{k,\,\mathrm{ind_s}}$ and
$0 < \rho_i^k < 1$, the predicted number of steps to reach settlement
is
\begin{equation}
\label{eq:continuation-score}
    \widehat{\kappa}_i^k
    \;=\;
        \max\!\left(
            \left\lceil
                \frac{\log(\tau_i^{k,\,\mathrm{ind_s}}
                      \,/\, t_i^{k,\,\mathrm{ind_s}})}
                     {\log \rho_i^k}
            \right\rceil,\;
            0
    \right).
\end{equation}

Denote $\kappa_{\mathrm{cont}}\geq 1$ as a continuation cap. It bounds
the length of any single phase and prevents the method from overusing
stale Ritz information. The \emph{continuation set}
\begin{equation}
\label{eq:continuation-set}
    \mathcal{J}_k
    \;=\;
    \bigl\{\,
        i \in \mathcal{I}_k :
        1 \leq \widehat{\kappa}_i^k \leq \kappa_{\mathrm{cont}}
    \,\bigr\}
\end{equation}
retains only unsettled admissible candidates whose predicted phase
length lies in the reliable window
$[1,\kappa_{\mathrm{cont}}]$.

\textit{Selection and phase-length commitment.}
If $\mathcal{J}_k \neq \emptyset$, the algorithm selects the candidate
with the largest weighted energy $\eta_i^k$ defined by
\eqref{eq:weighted-component-energy} and commits to the phase-length
estimator as the phase length $\ell_k$:
\begin{equation}
\label{eq:selection-J}
    i_k
    \;=\;
    \arg\max_{i \in \mathcal{J}_k}\, \eta_i^k,
    \qquad
    \ell_k
    \;=\;
    \widehat{\kappa}_{i_k}^k.
\end{equation}
If $\mathcal{J}_k = \emptyset$, no admissible candidate has a reliable
multi-step continuation length. The algorithm then falls back to a
single step at the most energetic admissible candidate:
\begin{equation}
\label{eq:selection-I}
    i_k
    =
    \arg\max_{i \in \mathcal{I}_k}\, \eta_i^k,
    \qquad
    \ell_k
    =
    1.
\end{equation}

\subsection{Estimating the Ritz uncertainty $\Delta\theta_i^k$}
\label{subsec:delta-theta}

The phase-length estimator \eqref{eq:continuation-score} requires an
estimate of the relative Ritz uncertainty
$|\Delta\theta_i^k|/\theta_i^k$. Two practical estimators are
available. A third, based on a first-order perturbation of the moment
system, is described in Appendix~\ref{app:perturbation-estimator} for
completeness but is not used in practice owing to numerical instability
when computing the inverse of a confluent Vandermonde matrix.

The first estimator follows from the Kato--Temple
inequality~\cite[Theorem~11.7.1]{parlett1998symmetric}. Under the
separation assumptions of that inequality, an eigenvalue of $A$
associated with $\theta_i^k$ satisfies an error bound of the form
$|\lambda - \theta_i^k|
\leq \|r_i^k\|_2^2/\delta_i^k$.
This gives the computable estimator
\begin{equation}
\label{eq:kt-estimator}
    \Delta\theta_{i,\mathrm{KT}}^k
    \;=\;
    \frac{\|r_i^k\|_2^2}{\widehat{\delta}_i^k},
\end{equation}
where $\widehat{\delta}_i^k$ is the neighboring Ritz gap
\begin{equation}
\label{eq:neighboring-ritz-gap}
\widehat{\delta}_i^k
=
\begin{cases}
\theta_2^k-\theta_1^k,
& i=1,\\[2mm]
\min\{\theta_i^k-\theta_{i-1}^k,\,
       \theta_{i+1}^k-\theta_i^k\},
& 2\leq i\leq m-1,\\[2mm]
\theta_m^k-\theta_{m-1}^k,
& i=m.
\end{cases}
\end{equation}
The residual norm $\|r_i^k\|_2^2$ is recoverable from the reduced
matrices without storing full gradient vectors, as shown in
\eqref{eq:app-residual-formula} of
Appendix~\ref{app:pseudo-memory}, making this estimator fully
compatible with the pseudo-memory framework.

The second estimator is motivated by the connection between BB1 and
BB2 step-sizes: their discrepancy measures how far a secant vector is
from a true eigenvector. Recall
Remark~\ref{rmk:LMSDlongshort} that there exist $m$ long LMSD Ritz
values $\theta^k_{i,\mathrm{long}}$ and $m$ short LMSD Ritz values
$\theta^k_{i,\mathrm{short}}$. Both belong to the narrow HDL class
$\mathcal{HDL}_{\mathrm{nar}}$ and can thus be recovered from the
pseudo-memory method.

A standard Cauchy--Schwarz argument in the $A$-inner product shows
that
\begin{equation}
\label{eq:long-short-ordering}
    \theta^k_{i,\mathrm{long}}
    \leq
    \theta^k_{i,\mathrm{short}},
    \qquad
    \alpha^k_{i,\mathrm{long}}
    \geq
    \alpha^k_{i,\mathrm{short}},
\end{equation}
so the long step-size is always the larger of the two. Both
$\theta^k_{i,\mathrm{long}}$ and
$\theta^k_{i,\mathrm{short}}$ are Ritz-type approximations to
eigenvalues of $A$. The non-negative gap
\begin{equation}
\label{eq:lmsd-estimator}
    \Delta\theta^k_{i,\mathrm{LMSD}}
    =
    \theta^k_{i,\mathrm{short}}
    -
    \theta^k_{i,\mathrm{long}}
    \geq
    0
\end{equation}
therefore serves as a natural estimator for the Ritz uncertainty
$\Delta\theta_i^k$.

The following result gives this estimator a precise residual
interpretation; the proof is in
Appendix~\ref{app:long-short-proof}.

\begin{proposition}[Residual bound for the long/short estimator]
\label{prop:long-short-residual}
Let $A \in \mathbb{S}^n_{++}$.
By \eqref{eq:long-short-ordering},
$\theta_{i,\mathrm{long}}^k
\leq
\theta_{i,\mathrm{short}}^k$.
Let $v_{i,\mathrm{long}}^k$ and
$v_{i,\mathrm{short}}^k$ be the corresponding unit Ritz vectors, and
define
\begin{equation}
\label{eq:long-short-residual-definition}
    r_{i,\mathrm{short}}^k
    =
    A v_{i,\mathrm{short}}^k
    -
    \theta_{i,\mathrm{short}}^k v_{i,\mathrm{short}}^k,
    \qquad
    \delta_{v,i}
    =
    \min_{\sigma=\pm 1}
    \bigl\|
    v_{i,\mathrm{short}}^k
    -
    \sigma\, v_{i,\mathrm{long}}^k
    \bigr\|_2.
\end{equation}
Then
\begin{equation}
\label{eq:long-short-bound}
    \|r_{i,\mathrm{short}}^k\|_2^2
    \leq
    \lambda_n
    \Bigl(
      \theta_{i,\mathrm{short}}^k
      -
      \theta_{i,\mathrm{long}}^k
      +
      \|A\|_2\,\delta_{v,i}(2 + \delta_{v,i})
    \Bigr).
\end{equation}
In particular, when $\delta_{v,i} \to 0$, the long/short discrepancy
directly controls the residual norm.
\end{proposition}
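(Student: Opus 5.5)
The plan is to work in the Krylov subspace $\mathcal K=\operatorname{span}\{w,Aw,\ldots,A^{m-1}w\}$ and use the Rayleigh-quotient structure of both variants. Long LMSD is the Rayleigh--Ritz projection of $A$ onto $\mathcal K$, so its unit Ritz vector satisfies $\theta_{i,\mathrm{long}}^k=(v_{i,\mathrm{long}}^k)^\top A v_{i,\mathrm{long}}^k$. Short LMSD comes from the pencil $(G^\top A^2G,\;G^\top AG)$. Hence for $v=v_{i,\mathrm{short}}^k$ (normalized in the Euclidean norm, as in the statement) we have
\[
\theta_{i,\mathrm{short}}^k=\frac{v^\top A^2 v}{v^\top A v}.
\]
Here I use that the Ritz vector $v=Gz$ satisfies $G^\top(A^2-\theta A)Gz=0$, and I multiply by $z^\top$.

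The first step is an exact identity for the residual:
\[
\|r_{i,\mathrm{short}}^k\|_2^2=\|Av\|^2-2\theta_{\mathrm{short}}v^\top Av+\theta_{\mathrm{short}}^2.
\]
Substituting $\|Av\|^2=\theta_{\mathrm{short}}\,v^\top Av$ gives
\[
\|r\|^2=\theta_{\mathrm{short}}\bigl(\theta_{\mathrm{short}}-v^\top Av\bigr).
\]
Since $\theta_{\mathrm{short}}\le\lambda_n$ (it is a Rayleigh quotient of $A$ in the $A$-inner product) and $\theta_{\mathrm{short}}-v^\top Av\ge 0$ by Cauchy--Schwarz, this yields
\[
\|r\|^2\le \lambda_n\bigl(\theta_{\mathrm{short}}-v^\top Av\bigr).
\]

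The second step is to split
\[
\theta_{\mathrm{short}}-v^\top Av=\bigl(\theta_{\mathrm{short}}-\theta_{\mathrm{long}}\bigr)+\bigl(\theta_{\mathrm{long}}-v^\top Av\bigr),
\]
and to bound the second term by the perturbation of the Rayleigh quotient between the two unit vectors. Choose the sign $\sigma$ attaining $\delta_{v,i}$ and write $u=\sigma v_{i,\mathrm{long}}^k$, $e=v-u$, so that $\|e\|=\delta_{v,i}$. Then
\[
u^\top Au-v^\top Av=-2u^\top Ae-e^\top Ae,
\]
whose absolute value is at most
\[
\|A\|_2\bigl(2\delta_{v,i}+\delta_{v,i}^2\bigr)=\|A\|_2\,\delta_{v,i}(2+\delta_{v,i}).
\]
Combining this with the first step gives exactly \eqref{eq:long-short-bound}. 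The limiting statement follows immediately, since the extra term vanishes as $\delta_{v,i}\to0$.

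The main obstacle is making the identity $\|Av\|^2=\theta_{\mathrm{short}}\,v^\top Av$ rigorous. This requires checking that the short Ritz vector lies in $\mathcal K$ and satisfies the Galerkin condition in the $A$-weighted form. The Galerkin condition gives only the scalar identity after contraction with $z$, and that scalar identity is all we need. I will also verify the normalization: if the paper's short Ritz vectors are $A$-normalized instead, I will rescale to unit Euclidean norm, which leaves $\theta$ unchanged. Finally, I will confirm $\theta_{\mathrm{short}}\le\lambda_n$ via $v^\top A^2v\le\lambda_n v^\top Av$.
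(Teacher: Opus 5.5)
Your proposal is correct and follows essentially the same route as the paper's proof. Both use the identity $\|r_{i,\mathrm{short}}^k\|_2^2=\theta_{i,\mathrm{short}}^k\bigl(\theta_{i,\mathrm{short}}^k-v^\top Av\bigr)$ obtained from $v^\top A^2v=\theta_{i,\mathrm{short}}^k\,v^\top Av$, the bound $\theta_{i,\mathrm{short}}^k\le\lambda_n$, and the perturbation $v=\sigma v_{i,\mathrm{long}}^k+e$ with $\|e\|_2=\delta_{v,i}$. You are also slightly more explicit than the paper: you justify the short Rayleigh-quotient relation through the Galerkin condition, and you note that the factor $\theta_{i,\mathrm{short}}^k-v^\top Av$ is nonnegative, both of which the paper leaves implicit.
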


Proposition~\ref{prop:long-short-residual} reveals two independent
proximity conditions: the spectral discrepancy
$\theta_{i,\mathrm{short}}^k-\theta_{i,\mathrm{long}}^k \to 0$
and the directional alignment $\delta_{v,i} \to 0$.
When either condition fails, as may occur near clustered eigenvalues,
the bound \eqref{eq:long-short-bound} becomes loose.
Then $\Delta\theta_{i,\mathrm{LMSD}}^k$ may overestimate the true Ritz
uncertainty, and $\widehat{\kappa}_i^k$ becomes too large to pass the
continuation filter. This is the intended behavior: an unreliable Ritz
value should not drive a multi-step phase.

To summarize, the two estimators in explicit form are
\begin{equation}
\label{eq:delta-theta-summary}
       \Delta\theta_i^k
    = \begin{cases}
        \displaystyle\frac{\|r_i^k\|_2^2}{\widehat{\delta}_i^k},
        & \text{(Kato--Temple)}, \\[8pt]
        \theta_{i,\mathrm{short}}^k
        -
        \theta_{i,\mathrm{long}}^k,
        & \text{(long/short LMSD)}.
      \end{cases}
\end{equation}
In both cases,
$\rho_i^k = \Delta\theta_i^k/\theta_i^k$ is substituted directly into
\eqref{eq:continuation-score}.

\subsection{Algorithm}
\label{subsec:alg-summary}

As shown below, the complete method is split into a main iteration
(Algorithm~\ref{alg:adaptive_hanoi_main}) and a selector
(Algorithm~\ref{alg:adaptive_hanoi_selector}).

\begin{algorithm}[!htbp]
\caption{Adaptive Hanoi-Type Ordering Main Iteration}
\label{alg:adaptive_hanoi_main}
\begin{algorithmic}[1]
\Require $A \in \mathbb{S}^n_{++}$, $b \in \mathbb{R}^n$,
         $x^0$, memory $m$, $K_{\max}$,
         tolerance $\Theta_{\mathrm{tol}}$,
         cap $\kappa_{\mathrm{cont}}$, $\gamma \in (0,1]$,
         exponents $r_\theta, r_t > 0$, indices
         $\mathrm{ind_r}, \mathrm{ind_s} \in \{1,\ldots,m+1\}$,
         power $r \geq 2$,
         $\mathtt{est} \in \{\mathtt{KT},\,\mathtt{LMSD}\}$
\Ensure Approximate minimizer $x^k$
\State $g^0 = Ax^0 - b$;\;
       $\epsilon = \Theta_{\mathrm{tol}}\|g^0\|_2$;\;
       $\alpha_0 = \|g^0\|_2^2 / (g^0)^\top\! Ag^0$;\;
       $k \leftarrow 0$;\;
       initialise moment window $h_0,\ldots,h_{2m} \leftarrow 0$;\;
       step-size log $\mathcal{L} \leftarrow \emptyset$
\While{$k < m$ \textbf{and} $\|g^k\|_2 > \epsilon$}
       \Comment{warm-up: build initial history window}
    \State $x^{k+1} = x^k - \alpha_0 g^k$;\;
           $g^{k+1} = Ax^{k+1} - b$;\;
           accumulate scalar products
           $(g^{k+1})^\top g^j$ for $j \leq k+1$
           and append $\alpha_0$ to $\mathcal{L}$;\;
           $k \leftarrow k+1$
\EndWhile
\State Recover $h_0^k,\ldots,h_{2m}^k$
       via \eqref{eq:app-forward-substitution}
       from the accumulated scalar products and $\mathcal{L}$
\While{$k < K_{\max}$ \textbf{and} $\|g^k\|_2 > \epsilon$}
   \State $(i_k,\,\theta_{i_k}^k,\,\ell_k)
       \leftarrow
       \text{Algorithm~\ref{alg:adaptive_hanoi_selector}}\bigl(
           h_0^k,\ldots,h_{2m}^k;\;
           \mathcal{L};\;
           \epsilon,n,m,\kappa_{\mathrm{cont}},\gamma,$
\Statex $\hspace{8em}
           r_\theta,r_t,\mathrm{ind_r},\mathrm{ind_s},r,
           \mathtt{est}\bigr)$;\;
       $\alpha \leftarrow (\theta_{i_k}^k)^{-1}$
    \For{$j = 1,\ldots,\ell_k$}
        \If{$k \geq K_{\max}$ \textbf{or} $\|g^k\|_2 \leq \epsilon$}
            \textbf{ break}
        \EndIf
        \State $x^{k+1} = x^k - \alpha g^k$;\;
               $g^{k+1} = Ax^{k+1} - b$
        \State Compute $(g^{k+1})^\top g^k$
               (requires retaining $g^k$);\;
               append $\alpha$ to $\mathcal{L}$ and
               evict $\alpha_{k-m}$;\;
               refresh moments via
               \eqref{eq:app-moment-refresh};\;
               $k \leftarrow k+1$
    \EndFor
\EndWhile
\State \Return $x^k$
\end{algorithmic}
\end{algorithm}

\begin{algorithm}[!htbp]
\caption{Pseudo-Memory Hanoi-Type Ordering Selector}
\label{alg:adaptive_hanoi_selector}
\begin{algorithmic}[1]
\Require Scalar moments $h_0^k, \ldots, h_{2m}^k$
         recovered via \eqref{eq:app-forward-substitution};\;
         step-size log $\mathcal{L} =
         \{\alpha_{k-m},\ldots,\alpha_{k-1}\}$;\;
         $\epsilon$, $n$, $m$,
         $\kappa_{\mathrm{cont}}$, $\gamma$,
         $r_\theta$, $r_t$,
         $\mathrm{ind_r}, \mathrm{ind_s} \in \{1,\ldots,m+1\}$,
         $r \geq 2$,
         $\mathtt{est} \in \{\mathtt{KT},\,\mathtt{LMSD}\}$
\Ensure $(i_k,\;\theta_{i_k}^k,\;\ell_k)$
\State Form the Hankel pair
       $H_0^k = [h_{p+q}^k]_{p,q=0}^{m-1}$ and
       $H_1^k = [h_{p+q+1}^k]_{p,q=0}^{m-1}$
       via \eqref{eq:moment-Hankel-pair};\;
       solve the reduced pencil $H_1^k y = \theta H_0^k y$;\;
       sort $\theta_1^k \leq \cdots \leq \theta_m^k$
\State Compute $t_i^{k,\,\mathrm{ind}}$ for all $i$ and
       $\mathrm{ind} \in \{1,\ldots,m+1\}$
       via \eqref{eq:component-energy-general};\;
       set $\eta_i^k = (\theta_i^k)^{r_\theta}
       (t_i^{k,\,\mathrm{ind_r}})^{r_t}$
       via \eqref{eq:weighted-component-energy}
\If{$\mathtt{est} = \mathtt{KT}$}
    \State Form $H_2^k = [h_{p+q+2}^k]_{p,q=0}^{m-1}$;\;
           compute $\|r_i^k\|_2^2 =
           y_i^\top(H_2^k - 2\theta_i^k H_1^k
           + (\theta_i^k)^2 H_0^k)y_i$
           via \eqref{eq:app-residual-formula};\;
           compute $\widehat{\delta}_i^k$ via
           \eqref{eq:neighboring-ritz-gap};\;
           $\Delta\theta_i^k \leftarrow
           \|r_i^k\|_2^2 / \widehat{\delta}_i^k$
           via \eqref{eq:delta-theta-summary}
\ElsIf{$\mathtt{est} = \mathtt{LMSD}$}
    \State Form $H_2^k = [h_{p+q+2}^k]_{p,q=0}^{m-1}$;\;
           solve $(H_1^k)^{-1} H_2^k z = \mu H_1^k z$
           for short Ritz values
           $\theta_{i,\mathrm{short}}^k$;\;
           $\Delta\theta_i^k \leftarrow
           \theta_{i,\mathrm{short}}^k - \theta_i^k$
           via \eqref{eq:delta-theta-summary}
\EndIf
\State Compute $\omega_k$ via \eqref{eq:admissibility-threshold}
       using $h_0^k$, the cross-product
       $(g^{k-\bar{m}_k+1})^\top g^{k-\bar{m}_k}$
       from the moment window, and
       $\alpha_{k-\bar{m}_k} \in \mathcal{L}$;\;
       $\mathcal{I}_k = \{i : \theta_i^k \geq \omega_k\}$
\State $\theta_{\min}^k =
       \min\bigl\{\min_{\alpha \in \mathcal{L}}\alpha^{-1},\;
       \theta_1^k\bigr\}$
       via \eqref{eq:theta-lower}
\For{$i \in \mathcal{I}_k$}
    \State $\tau_i^{k,\,\mathrm{ind_s}} =
           \dfrac{\epsilon^2}{n}
           \cdot
           \left(\dfrac{\theta_{\min}^k}{\theta_i^k}\right)^r$
           \quad via \eqref{eq:hanoi-threshold};\;
           $\rho_i^k = \Delta\theta_i^k / \theta_i^k$
    \State $\widehat{\kappa}_i^k =
           \begin{cases}
               0,
               & t_i^{k,\,\mathrm{ind_s}}
                 \leq \tau_i^{k,\,\mathrm{ind_s}},
               \\[4pt]
               \left\lceil
               \dfrac{\log(\tau_i^{k,\,\mathrm{ind_s}}
                     \,/\,t_i^{k,\,\mathrm{ind_s}})}
                    {\log\rho_i^k}
               \right\rceil,
               & 0 < \rho_i^k < 1,
               \\[8pt]
               \kappa_{\mathrm{cont}}+1,
               & \text{otherwise}
           \end{cases}$
           \quad via \eqref{eq:continuation-score}
\EndFor
\State $\mathcal{J}_k =
       \{i \in \mathcal{I}_k :
       1 \leq \widehat{\kappa}_i^k
       \leq \kappa_{\mathrm{cont}}\}$
       \quad via \eqref{eq:continuation-set}
\If{$\mathcal{J}_k \neq \emptyset$}
    \State $i_k = \arg\max_{i\in\mathcal{J}_k}\eta_i^k$;\;
           $\ell_k = \widehat{\kappa}_{i_k}^k$
           \quad via \eqref{eq:selection-J}
\Else
    \State $i_k = \arg\max_{i\in\mathcal{I}_k}\eta_i^k$;\;
           $\ell_k = 1$
           \quad via \eqref{eq:selection-I}
\EndIf
\State \Return $(i_k,\;\theta_{i_k}^k,\;\ell_k)$
\end{algorithmic}
\end{algorithm}

Algorithm~\ref{alg:adaptive_hanoi_main} first uses a fixed Cauchy
step to build the initial history window of $m$ gradients. After that,
each call to Algorithm~\ref{alg:adaptive_hanoi_selector} returns a
Ritz candidate and a continuation length $\ell_k$, so the same spectral
step-size is reused for $\ell_k$ consecutive iterations before the
reduced model is rebuilt.

The algorithm exposes six tunable parameters: the ranking index
$\mathrm{ind_r}$ and exponents $r_\theta$, $r_t$ governing the weighted
energy $\eta_i^k$ in \eqref{eq:weighted-component-energy}; the
settlement index $\mathrm{ind_s}$ and rebound-prevention exponent $r$
in the threshold $\tau_i^{k,\,\mathrm{ind_s}}$ in
\eqref{eq:hanoi-threshold}; and the upper bound on the length of a
single phase $\kappa_{\mathrm{cont}}$ in the continuation set
\eqref{eq:continuation-set}. Recommended parameter values are given in
Section~\ref{sec:numerical-experiments}.

\section{Convergence Analysis}
\label{sec:convergence_analysis}

We establish global $R$-linear convergence of
Algorithm~\ref{alg:adaptive_hanoi_main} on the strictly convex
quadratic objective~\eqref{eq:intro-quadratic}.  All arguments are
carried out in the eigenbasis of $A$, following the decomposition
introduced in Section~\ref{subsec:intro-related-work}.

A distinctive feature of the proposed scheme is the multi-step reuse
strategy: a fixed reciprocal step-size $\theta_k$ selected by the adaptive
Hanoi-type ordering algorithm may be applied for $\ell_k$ consecutive iterations
before the spectral information is refreshed.  This introduces an
effective information lag of up to
$\tilde{m} := m + \kappa_{\mathrm{cont}} - 1$
steps between the gradient window used to compute $\theta_k$ and the
iterates to which it is applied.  The central task of this section is
to show that this delayed mechanism still preserves mode-wise
suppression, and hence yields global $R$-linear convergence.

The convergence proof follows the standard component-wise framework of
spectral gradient analysis~\cite{dai2002r,gu2021modified} , with the admissibility condition \eqref{eq:admissible-set} playing the central stabilizing role.
However, the underlying contraction mechanism differs in an important way
from those of Dai et al.~\cite{dai2002r} and Gu and
Du~\cite{gu2021modified}.  Rather than estimating forward decay for
arbitrary step choices, we exploit the admissibility
filter~\eqref{eq:admissibility-threshold} in a \emph{backward}
fashion: working from any target iterate back to the refresh index
that produced its step-size, we show that the filter always forces
a geometric contraction on the dominant unresolved component within
a bounded window, regardless of how many times the step-size has
been reused. 

\medskip
\noindent\textit{Setup.}
Recall from~\eqref{eq:intro-component-dynamics} that the
eigen-components $d_i^k := q_i^\top g^k$ satisfy
\begin{equation}
\label{eq:eigen_component_update}
    d_i^{k+1}
    = \Bigl(1 - \frac{\lambda_i}{\theta_k}\Bigr) d_i^k,
    \qquad i = 1, \ldots, n,
\end{equation}
where $\theta_k = \alpha_k^{-1}$ is the reciprocal step-size applied
at iteration $k$.  Define the \emph{partial component energy}
\begin{equation}
\label{eq:Dkl_def}
    D(k,\,l) := \sum_{i=1}^{l} (d_i^k)^2,
    \qquad l = 1, \ldots, n,
\end{equation}
and the auxiliary constants
\begin{equation}
\label{eq:sc_def}
    s   := \frac{2}{\gamma},
    \qquad
    c   := \max\!\left\{
               1 - \frac{\lambda_1}{\lambda_n},\;
               \frac{1}{1+\gamma}
           \right\},
    \qquad
    \delta := \frac{\lambda_n}{\lambda_1} - 1,
\end{equation}
which depend only on the spectrum of $A$ and the safeguard parameter
$\gamma \in (0,1]$ from~\eqref{eq:admissibility-threshold}.
Under the standing assumption $\delta \geq 1$, one has
$\tfrac{1}{2} \leq c < 1$. The case $\lambda_n/\lambda_1 < 2$ is covered by replacing $\delta$ with $1$.
We henceforth assume $\delta \ge 1$.

Given $\varepsilon_l \in (0,\gamma/4]$ and an integer $m_l \geq 1$,
set $\tilde{m} := m + \kappa_{\mathrm{cont}} - 1$ and define the
iteration-window parameter
\begin{equation}
\label{eq:Delta_l_def}
    \Delta_l
    :=
    \left\lceil
    \frac{\log\!\bigl(
          s\,\varepsilon_l\,\delta^{-2(m_l+\tilde{m})}
          \bigr)}
         {2\log c}
    \right\rceil
    + \tilde{m} - 1.
\end{equation}

\medskip
The analysis proceeds through four lemmas followed by the main
convergence proposition.  All proofs are in
Appendix~\ref{app:convergence-proofs}.

\begin{lemma}
\label{lem:single_step_bound}
For all $i = 1, \ldots, n$ and $k \geq 0$,
\[
    (d_i^{k+1})^2 \leq \delta^2\,(d_i^k)^2,
    \qquad
    \|g^{k+1}\|_2 \leq \delta\,\|g^k\|_2.
\]
\end{lemma}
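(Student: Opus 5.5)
The bound follows from a uniform estimate on the scalar amplification factor in \eqref{eq:eigen_component_update}. I will show that every reciprocal step-size $\theta_k$ used by Algorithm~\ref{alg:adaptive_hanoi_main} satisfies
\[
\lambda_1 \le \theta_k \le \lambda_n .
\]
Given this, for each $i$,
\[
\Bigl|1-\frac{\lambda_i}{\theta_k}\Bigr| \le \max\Bigl\{1-\frac{\lambda_1}{\lambda_n},\; \frac{\lambda_n}{\lambda_1}-1\Bigr\}.
\]
The first branch covers $\lambda_i\le\theta_k$; the second covers $\lambda_i>\theta_k$, where $\lambda_i/\theta_k-1\le \lambda_n/\lambda_1-1=\delta$. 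Since $1-\lambda_1/\lambda_n<1\le\delta$ under the standing assumption $\delta\ge1$, the factor is at most $\delta$. Squaring gives $(d_i^{k+1})^2\le\delta^2(d_i^k)^2$. Summing over $i$ and using orthogonality of $Q$, so that $\|g^k\|_2=\|d^k\|_2$, gives the gradient bound.

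It remains to justify $\theta_k\in[\lambda_1,\lambda_n]$, treating the steps by type.
\begin{itemize}
\item During warm-up, $\theta=1/\alpha_0=(g^0)^\top Ag^0/\|g^0\|^2$ is a Rayleigh quotient of $A$, so it lies in $[\lambda_1,\lambda_n]$.
\item After warm-up, $\theta_k$ is some Ritz value $\theta_{i_k}^{k'}$ computed at a refresh index $k'\le k$ and reused for at most $\kappa_{\mathrm{cont}}$ iterations. Reuse does not affect the bound, because the interval $[\lambda_1,\lambda_n]$ is fixed.
\item A Ritz value is an eigenvalue of $T_{k'}=Q_{k'}^\top AQ_{k'}$, equivalently a generalized eigenvalue of the Hankel pair \eqref{eq:moment-Hankel-pair}. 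It therefore equals $y^\top H_1y/y^\top H_0y=v^\top Av/v^\top v$ for a nonzero Krylov vector $v$, and so lies in $[\lambda_1,\lambda_n]$ by the Rayleigh--Ritz (Cauchy interlacing) principle.
\end{itemize}
The admissibility filter \eqref{eq:admissible-set} only restricts the choice further, so it cannot violate this bound. I would also record that $\omega_k\le\lambda_n$, since $\frac{1+\gamma}{2}\le1$ and the BB1-type quotient in \eqref{eq:admissibility-threshold} is a Rayleigh quotient. This does not matter for the lemma but confirms consistency.

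The main obstacle is the well-definedness of the reduced pencil: the argument needs $H_0^{k'}$ nonsingular (positive definite), so that the Ritz values are genuine Rayleigh quotients. I would invoke the standing nonsingularity assumption behind \eqref{eq:G-Krylov-span-equivalence}. If the window is rank deficient, I would use the reduced effective rank $\bar m_k$ implicit in \eqref{eq:admissibility-threshold}: restricting to a positive definite leading block still yields Rayleigh quotients, so the same bound holds.
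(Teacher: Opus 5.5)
Your proposal is correct and follows the same route as the paper. Both arguments show $\lambda_1\le\theta_k\le\lambda_n$, because $\theta_k$ is either the initial Cauchy Rayleigh quotient or a (possibly reused) Ritz value of $T_k=Q_k^\top AQ_k$, and then bound $|1-\lambda_i/\theta_k|$ by $\delta$ before squaring and summing. Your remarks on reuse, the admissibility filter, and nonsingularity of $H_0^k$ make explicit what the paper leaves implicit. Also, $1-\lambda_1/\lambda_n=\delta/(1+\delta)\le\delta$ holds without invoking $\delta\ge 1$.
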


\begin{lemma}
\label{lem:sweeping_l_plus_1}
Let $K \leq t$ be the iteration at which the step-size $\theta_t$ 
applied at iteration $t$ was computed. By the algorithm's reuse rule, 
$t - K \leq \kappa_{\mathrm{cont}} - 1$. Assume $\delta \geq 1$.  Let $1 \leq l < n$ and $k \geq m$.
Suppose there exist $\varepsilon_l \in (0,\gamma/4]$ and an integer
$m_l \geq 1$ such that
\[
    D(k+j,\,l) \leq \varepsilon_l\,\|g^k\|_2^2,
    \qquad \forall\, j \geq m_l.
\]
Then there exists $j_0 \in [m_l,\, m_l + \Delta_l + 1]$ satisfying
\[
    (d_{l+1}^{k+j_0})^2 \leq s\,\varepsilon_l\,\|g^k\|_2^2.
\]
\end{lemma}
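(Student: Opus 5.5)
We argue by contradiction, in the backward style announced at the start of Section~\ref{sec:convergence_analysis}. Suppose that
\[
    (d_{l+1}^{k+j})^2 > s\,\varepsilon_l\,\|g^k\|_2^2
    \qquad\text{for every } j\in[m_l,\,m_l+\Delta_l+1].
\]
The plan is to show that, once the whole information window feeding a step lies beyond $k+m_l$, every applied reciprocal step $\theta_t$ contracts component $l+1$ by the factor $c$ of \eqref{eq:sc_def}. Iterating this contraction over $\Delta_l$ steps then drives $(d_{l+1})^2$ below $s\varepsilon_l\|g^k\|_2^2$, which is the desired contradiction.

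\textbf{Step 1 (lower bound on $\theta_t$).} Fix an iteration $t$ in the window and let $K$ be its refresh index, so that $t-K\le\kappa_{\mathrm{cont}}-1$. By \eqref{eq:admissible-set} and the second form of \eqref{eq:admissibility-threshold}, $\theta_t\ge\omega_K$. Write $p:=K-\bar m_K$. Then $\omega_K$ equals $\tfrac{1+\gamma}{2}$ times the Rayleigh quotient
\[
    \frac{(g^{p})^\top A g^{p}}{\|g^{p}\|_2^2}
    =\frac{\sum_i\lambda_i (d_i^{p})^2}{\sum_i (d_i^{p})^2}.
\]
Dropping the first $l$ terms of the numerator gives
\[
    \frac{(g^{p})^\top A g^{p}}{\|g^{p}\|_2^2}
    \;\ge\;\lambda_{l+1}\Bigl(1-\frac{D(p,l)}{\|g^{p}\|_2^2}\Bigr).
\]
Since $t-p\le\tilde m$, we have $p\ge k+m_l$ whenever $t\ge k+m_l+\tilde m$. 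For such $t$ the hypothesis of the lemma gives $D(p,l)\le\varepsilon_l\|g^k\|_2^2$, while the contradiction hypothesis gives $\|g^{p}\|_2^2\ge (d_{l+1}^{p})^2> s\varepsilon_l\|g^k\|_2^2$. Hence the ratio $D(p,l)/\|g^{p}\|_2^2$ is at most $1/s=\gamma/2$, and
\[
    \theta_t\;\ge\;\tfrac{1+\gamma}{2}\bigl(1-\tfrac{\gamma}{2}\bigr)\lambda_{l+1}.
\]

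\textbf{Step 2 (per-step contraction).} From \eqref{eq:eigen_component_update} we have $|1-\lambda_{l+1}/\theta_t|\le\max\{1-\lambda_1/\lambda_n,\;\lambda_{l+1}/\theta_t-1\}$, where the first branch uses $\theta_t\le\lambda_n$ (true for Ritz values). The second branch must be bounded using Step~1 together with $\varepsilon_l\le\gamma/4$. This is the step I expect to be the main obstacle, because the constants have to land exactly on $1/(1+\gamma)$ as in \eqref{eq:sc_def}.

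If the crude factor $(1-\gamma/2)$ from Step~1 is not sharp enough, I would first sharpen the bound by keeping the exact quotient: bounding $D(p,l)$ by $\varepsilon_l\|g^k\|_2^2\le(\gamma/4)\|g^k\|_2^2$ and using $s\varepsilon_l\le 1/2$ to control the denominator more carefully should give a deficit of order $\gamma/4$ rather than $\gamma/2$, which may suffice. Failing that, I would redo the admissibility argument directly on $|1-\lambda_{l+1}/\theta_t|$, using the monotonicity in $\theta_t$ of the second branch. Either way, the target conclusion of this step is $(d_{l+1}^{t+1})^2\le c^2 (d_{l+1}^t)^2$ for all $t\ge k+m_l+\tilde m$ in the window.

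\textbf{Step 3 (accumulate and conclude).} Let $t_0:=k+m_l+\tilde m-1$. From $k$ up to $t_0$ we have no contraction, so we use Lemma~\ref{lem:single_step_bound} instead: since $\|g^k\|_2\ge|d_{l+1}^k|$, this gives
\[
    (d_{l+1}^{t_0})^2\le \delta^{2(m_l+\tilde m)}\|g^k\|_2^2 .
\]
From $t_0$ onward, Step~2 applies at each iteration. With $j_0:=m_l+\Delta_l+1$, we obtain
\[
    (d_{l+1}^{k+j_0})^2\le c^{2(\Delta_l-\tilde m+1)}\,\delta^{2(m_l+\tilde m)}\|g^k\|_2^2 .
\]
Because $\log c<0$, the definition \eqref{eq:Delta_l_def} of $\Delta_l$ is precisely the condition that the exponent satisfies $c^{2(\Delta_l-\tilde m+1)}\le s\varepsilon_l\,\delta^{-2(m_l+\tilde m)}$. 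Therefore $(d_{l+1}^{k+j_0})^2\le s\varepsilon_l\|g^k\|_2^2$, contradicting the assumption. Hence some $j_0\in[m_l,\,m_l+\Delta_l+1]$ has the claimed property.
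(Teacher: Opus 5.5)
Your overall architecture matches the paper's. You argue by contradiction and use the backward anchor $p=K-\bar m_K$ with $t-p\le\tilde m$. You contract component $l+1$ by $c$ for $t\ge k+m_l+\tilde m$, use the crude bound $\delta^{2(m_l+\tilde m)}$ before that, and close with the definition of $\Delta_l$.

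The gap is Step~2. You flag it yourself, and the bound you actually prove in Step~1 cannot close it. From $D(p,l)/\|g^p\|_2^2<1/s=\gamma/2$ you get $\theta_t\ge\frac{(1+\gamma)(2-\gamma)}{4}\lambda_{l+1}$, hence only
\[
\frac{\lambda_{l+1}}{\theta_t}-1\le\frac{2-\gamma+\gamma^2}{(1+\gamma)(2-\gamma)} .
\]
This exceeds $1/(1+\gamma)$ for every $\gamma>0$. At $\gamma=1$ it equals $1$, so you get no contraction at all there.

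Your proposed repair via $\varepsilon_l\le\gamma/4$ and $s\varepsilon_l\le 1/2$ does not work either. The ratio $D(p,l)/\|g^p\|_2^2$ is invariant under rescaling $\varepsilon_l$. Under the available hypotheses its supremum is $\gamma/(2+\gamma)$, which is strictly larger than $\gamma/4$, so a deficit of order $\gamma/4$ cannot be proved. The hypothesis $\varepsilon_l\le\gamma/4$ is needed only to make $\log\bigl(s\varepsilon_l\delta^{-2(m_l+\tilde m)}\bigr)<0$ in \eqref{eq:Delta_l_def}, so that $\Delta_l\ge\tilde m$.

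The missing step is to keep $D(p,l)$ in the denominator instead of discarding it. Write $\|g^p\|_2^2=D(p,l)+\sum_{i>l}(d_i^p)^2$, and note $\sum_{i>l}(d_i^p)^2\ge(d_{l+1}^p)^2>s\varepsilon_l\|g^k\|_2^2$. The map $(D,x)\mapsto D/(D+x)$ is increasing in $D$ and decreasing in $x$, so
\[
\frac{D(p,l)}{\|g^p\|_2^2}\le\frac{\varepsilon_l}{\varepsilon_l+s\varepsilon_l}=\frac{1}{1+s}=\frac{\gamma}{2+\gamma}.
\]
Hence $\omega_K\ge\frac{1+\gamma}{2}\cdot\frac{s}{s+1}\lambda_{l+1}=\frac{1+\gamma}{2+\gamma}\lambda_{l+1}$, and $\theta_t\ge\omega_K$ gives $\lambda_{l+1}/\theta_t-1\le\frac{1}{1+\gamma}$ exactly. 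This is the bound the paper uses. It is sharp, and the constants $s$ and $c$ in \eqref{eq:sc_def} are tuned to it, so there is no slack to lose.

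A minor indexing point: the contraction is available for $t\in[k+m_l+\tilde m,\,k+m_l+\Delta_l]$, not from $t_0=k+m_l+\tilde m-1$. The crude bound should therefore be taken at iteration $k+m_l+\tilde m$. Your exponents $\delta^{2(m_l+\tilde m)}$ and $c^{2(\Delta_l-\tilde m+1)}$ already reflect this, so your final inequality is correct. To apply the contradiction hypothesis at $p$, you also need $p\le k+m_l+\Delta_l+1$; this follows from $p\le K\le t$.
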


The proof formalizes the backward tracing argument outlined above:
for any target iterate $t$, the refresh index $K$ and anchor
$p = K - \bar{m}_K$ are located within the contradiction window,
so the multi-step lag $\tilde{m}$ is absorbed into the window
length $\Delta_l$ rather than propagating as an independent penalty.

\begin{lemma}
\label{lem:induction_l_plus_1}
Assume $\delta \geq 1$.  Under the hypotheses of
Lemma~\ref{lem:sweeping_l_plus_1}, set
\begin{equation}
\label{eq:eps_induction}
    \varepsilon_{l+1}
    := \bigl(1 + s\,\delta^{2(\tilde{m}+1)}\bigr)\,\varepsilon_l,
    \qquad
    m_{l+1} := m_l + \Delta_l + 1.
\end{equation}
Then
\[
    D(k+j,\,l+1)
    \leq \varepsilon_{l+1}\,\|g^k\|_2^2,
    \qquad \forall\, j \geq m_{l+1}.
\]
\end{lemma}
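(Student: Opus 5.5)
We prove the claim with a one-step ``restart and propagate'' argument: restart at the good index produced by Lemma~\ref{lem:sweeping_l_plus_1}, then propagate the bound on component $l+1$ forward, splitting into the case where that component stays small and the case where it grows, and bounding each piece by the factor $s\delta^{2(\tilde m+1)}$.

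\textbf{Splitting the energy.} Write
\[
D(k+j,\,l+1)=D(k+j,\,l)+(d_{l+1}^{k+j})^2 .
\]
For every $j\ge m_{l+1}\ge m_l$, the hypothesis already gives $D(k+j,l)\le\varepsilon_l\|g^k\|_2^2$. It therefore suffices to show
\[
(d_{l+1}^{k+j})^2\le s\,\delta^{2(\tilde m+1)}\varepsilon_l\|g^k\|_2^2
\qquad\text{for all } j\ge m_{l+1}.
\]

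\textbf{Base point.} Lemma~\ref{lem:sweeping_l_plus_1} supplies $j_0\in[m_l,\,m_l+\Delta_l+1]$ with $(d_{l+1}^{k+j_0})^2\le s\varepsilon_l\|g^k\|_2^2$. Call such an index \emph{good}.

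\textbf{Reapplying the lemma.} The key observation is that Lemma~\ref{lem:sweeping_l_plus_1} can be reapplied with the same $k$ and with $m_l$ replaced by any $m'\ge m_l$. Its hypothesis is only that $D(k+j,l)\le\varepsilon_l\|g^k\|_2^2$ for all $j\ge m'$, and this is inherited. Hence every window $[m',\,m'+\Delta_l+1]$ with $m'\ge m_l$ contains a good index, so good indices occur with gaps at most $\Delta_l+2$.

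\textbf{Propagation across a gap.} Next we control $d_{l+1}$ between consecutive good indices. Lemma~\ref{lem:single_step_bound} alone would give growth of order $\delta^{2(\Delta_l+2)}$, which is too weak. We therefore use a finer mechanism, split into two cases.

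\emph{Case 1: the step does not expand component $l+1$.} If $\theta_t\ge\lambda_{l+1}/2$, then $|1-\lambda_{l+1}/\theta_t|\le1$ and $d_{l+1}$ does not grow at that step.

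\emph{Case 2: the step expands component $l+1$.} Growth requires $\theta_t<\lambda_{l+1}/2$. By the admissibility filter \eqref{eq:admissible-set}, $\theta_t\ge\omega_K$. Moreover $\omega_K$ is $\tfrac{1+\gamma}{2}$ times a Rayleigh quotient of $g^{p}$ with $p=K-\bar m_K$. A small $\omega_K$ therefore forces the low components $1,\dots,l$ to carry a fixed fraction of the energy of $g^p$ at the anchor.

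This is the same backward-tracing estimate used in the proof of Lemma~\ref{lem:sweeping_l_plus_1}, and we reuse that computation. It shows that whenever an expanding step occurs at $t$, we have $(d_{l+1}^p)^2\lesssim s\,\varepsilon_l\|g^k\|_2^2$ at an anchor $p$ with $t-p\le\tilde m$, provided $p\ge k+m_l$. Applying Lemma~\ref{lem:single_step_bound} over the at most $\tilde m+1$ steps from $p$ to $t+1$ then yields
\[
(d_{l+1}^{t+1})^2\le s\,\delta^{2(\tilde m+1)}\varepsilon_l\|g^k\|_2^2 .
\]

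\textbf{Induction.} Run an induction on $j\ge m_{l+1}$. At each $j$, either the last step was non-expanding, so the bound persists from $j-1$, or it was expanding, so the anchor argument re-establishes the bound directly.

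\textbf{Base of the induction.} The anchor requirement $p\ge k+m_l$ is guaranteed once $j\ge m_l+\tilde m+1$. The choice $m_{l+1}=m_l+\Delta_l+1$ covers this, because $\Delta_l\ge\tilde m-1$ by \eqref{eq:Delta_l_def}. The bound must also hold at the first index $j=m_{l+1}$. We obtain it by starting the induction at the good index $j_0\le m_{l+1}$, where the smaller bound $s\varepsilon_l$ holds and trivially implies the target since $\delta\ge1$.

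\textbf{Main obstacle.} The main obstacle is Case 2: extracting from $\theta_t<\lambda_{l+1}/2$ together with $\theta_t\ge\omega_K$ a smallness statement for $d_{l+1}$ at the anchor $p$. The Rayleigh-quotient lower bound gives information on the \emph{ratio} of high to low energy at $p$. Combined with $D(p,l)\le\varepsilon_l\|g^k\|^2$ and $\varepsilon_l\le\gamma/4$, this should bound $(d_{l+1}^p)^2$ by $s\varepsilon_l\|g^k\|^2$ with $s=2/\gamma$. The constant $(1+\gamma)/2$ in $\omega$ has to be tracked carefully to obtain exactly $s$. If this direct route fails, the fallback is to cap the number of consecutive expanding steps by $\tilde m+1$ and absorb them into the exponent $2(\tilde m+1)$.
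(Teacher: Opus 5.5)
Your strategy is sound. It is essentially the forward, contrapositive form of the paper's argument. The paper takes the largest good index $r\le j$. If $j-r\le\tilde m+1$ it uses Lemma~\ref{lem:single_step_bound}. Otherwise every step $t\ge k+r+\tilde m+1$ has its anchor strictly after $k+r$, where $d_{l+1}$ is large, so the admissibility computation forces $|1-\lambda_{l+1}/\theta_t|\le c\le 1$. You argue instead that an expanding step forces a good anchor at most $\tilde m$ steps back.

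The genuine gap is in your base case. Your inductive step needs the anchor $p$ of the step $t=k+j-1$ to satisfy $p\ge k+m_l$; otherwise $D(p,l)\le\varepsilon_l\|g^k\|_2^2$ is not available and an expanding step cannot be excluded. This is guaranteed only for $j\ge m_l+\tilde m+1$. But you start the induction at $j_0$, which Lemma~\ref{lem:sweeping_l_plus_1} only places in $[m_l,m_{l+1}]$, and which may equal $m_l$. For $j\in(j_0,\,m_l+\tilde m]$ the anchor can precede $k+m_l$ (e.g.\ $K=t=k+m_l$ with $\bar m_K=m$). There neither of your two cases applies, so ``starting at $j_0$'' does not deliver the bound at $j=m_{l+1}$. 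Separately, $\Delta_l\ge\tilde m-1$ only gives $m_{l+1}\ge m_l+\tilde m$, one short of what you need. The missing unit comes from the ceiling in \eqref{eq:Delta_l_def}, which is at least $1$ because $s\varepsilon_l\le 1/2$ and $c<1$.

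The repair is cheap, and it is exactly what the budget $\delta^{2(\tilde m+1)}$ is for. For $j\in[j_0,\,j_0+\tilde m+1]$, use Lemma~\ref{lem:single_step_bound} directly:
\[
(d_{l+1}^{k+j})^2\le\delta^{2(j-j_0)}s\varepsilon_l\|g^k\|_2^2\le s\delta^{2(\tilde m+1)}\varepsilon_l\|g^k\|_2^2 .
\]
For $j>j_0+\tilde m+1$ the anchor satisfies $p\ge k+j-1-\tilde m\ge k+j_0+1>k+m_l$, so your dichotomy applies. This gives the bound for all $j\ge j_0$, hence for all $j\ge m_{l+1}$.

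Your ``main obstacle'' is not one. If $(d_{l+1}^p)^2>s\varepsilon_l\|g^k\|_2^2$ and $D(p,l)\le\varepsilon_l\|g^k\|_2^2$, the Rayleigh-quotient form of \eqref{eq:admissibility-threshold} gives $\omega_K\ge\frac{1+\gamma}{2}\cdot\frac{s}{s+1}\lambda_{l+1}=\frac{1+\gamma}{2+\gamma}\lambda_{l+1}\ge\lambda_{l+1}/2$. An expanding step is therefore impossible, and no fallback is needed.

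Your ``reapplying the lemma'' paragraph is never used, and as stated it is not justified. $\Delta_l$ depends on $m_l$ through $\delta^{-2(m_l+\tilde m)}$, so shifting $m_l$ to a larger $m'$ enlarges the window. Your forward argument needs only the single good index $j_0$. That is a mild advantage over the paper's proof, which reapplies Lemma~\ref{lem:sweeping_l_plus_1} on $[j-(\Delta_l+1),j]$. The paper's reapplication is harmless, since it could equally take the largest good index in $[m_l,j]$.
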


Lemma~\ref{lem:induction_l_plus_1} lifts the pointwise bound of
Lemma~\ref{lem:sweeping_l_plus_1} to a uniform window bound.
The factor $\delta^{2(\tilde{m}+1)}$ in $\varepsilon_{l+1}$
captures the worst-case amplification over the lag window, and
the induction structure is otherwise identical to that of the
standard LMSD analysis~\cite{gu2021modified} with $m$ replaced
by $\tilde{m}$.

Applying Lemma~\ref{lem:induction_l_plus_1} inductively over
$l = 1, \ldots, n-1$, starting from the base case $l = 1$ with
$\varepsilon_1 = \gamma/4$ and $m_1 = 1$, yields a uniform
partial-energy bound for all $n$ components.

\begin{lemma}
\label{lem:global_halving}
Assume $\delta \geq 1$.  There exists a positive integer $M$,
depending only on $n, \delta, \gamma, c, \tilde{m}$, such that
\[
    \|g^{k+j}\|_2^2 \leq \tfrac{1}{2}\,\|g^k\|_2^2,
    \qquad \forall\, j \geq M,\; k \geq m.
\]
\end{lemma}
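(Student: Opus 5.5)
We plan to derive the statement by iterating Lemma~\ref{lem:induction_l_plus_1} over $l=1,\ldots,n-1$ and then converting the resulting uniform partial-energy bound on $D(k+j,n)=\|g^{k+j}\|_2^2$ into the halving estimate. The only extra ingredient is a careful choice of the starting tolerance $\varepsilon_1$, so that after $n-1$ inflation steps the final tolerance $\varepsilon_n$ is at most $1/2$.

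\textbf{Base case.} Fix $k\ge m$. For $l=1$ we need $D(k+j,1)=(d_1^{k+j})^2\le \varepsilon_1\|g^k\|_2^2$ for all $j\ge m_1$. We would not take $\varepsilon_1=\gamma/4$ literally. Instead we choose
\[
    \varepsilon_1 := \frac{\gamma}{4}\,\bigl(1+s\,\delta^{2(\tilde m+1)}\bigr)^{-(n-1)}\cdot\frac{2}{\gamma}\cdot\frac12
    \;\le\;\frac{\gamma}{4},
\]
or more simply any $\varepsilon_1\le \min\{\gamma/4,\ \tfrac12(1+s\delta^{2(\tilde m+1)})^{-(n-1)}\}$.

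To establish the base case we use the same backward-tracing argument as in Lemma~\ref{lem:sweeping_l_plus_1}. Applied with "$l=0$" (vacuous hypothesis $D(\cdot,0)=0$), the admissibility filter gives $\theta_t\le$ something comparable to $\lambda_n$ and $\theta_t\ge\omega\ge\lambda_1$-scale. Hence $|1-\lambda_1/\theta_t|\le c$ for every iteration, so $(d_1^{k+j})^2\le c^{2j}(d_1^k)^2\le c^{2j}\|g^k\|_2^2$. Taking $m_1:=\lceil \log\varepsilon_1/(2\log c)\rceil$ (at least $1$) yields the base case. This uses only that every reciprocal step lies in $[\lambda_1,\lambda_n]$-type bounds, i.e. $\theta_t\ge\lambda_1$. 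That bound holds because Ritz values from the LMSD pencil lie in $[\lambda_1,\lambda_n]$, and the filter only removes small ones.

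\textbf{Induction and conclusion.} Lemma~\ref{lem:induction_l_plus_1} then produces $\varepsilon_{l+1}=(1+s\delta^{2(\tilde m+1)})\varepsilon_l$ and $m_{l+1}=m_l+\Delta_l+1$. Since $\varepsilon_l\le\varepsilon_n\le\tfrac12\le$ well, we must also keep each $\varepsilon_l\le\gamma/4$ to remain within the lemma's hypothesis. So we actually choose
\[
    \varepsilon_1:=\tfrac{\gamma}{4}\bigl(1+s\delta^{2(\tilde m+1)}\bigr)^{-(n-1)},
\]
which gives $\varepsilon_l\le\gamma/4\le 1/4<1/2$ for all $l\le n$. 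After $n-1$ applications we get $\|g^{k+j}\|_2^2=D(k+j,n)\le\varepsilon_n\|g^k\|_2^2\le\tfrac12\|g^k\|_2^2$ for $j\ge m_n=:M$.

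Each $\Delta_l$ in \eqref{eq:Delta_l_def} depends only on $\varepsilon_l,m_l,\delta,c,\gamma,\tilde m$. So $M$ depends only on $n,\delta,\gamma,c,\tilde m$, and in particular not on $k$ or the starting point. Note also that $\Delta_l\ge0$ because $s\varepsilon_l\delta^{-2(\cdot)}<1$ and $\log c<0$.

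\textbf{Main obstacle.} The delicate step is the base case. We must verify that every applied $\theta_t$, including reused ones, satisfies $|1-\lambda_1/\theta_t|\le c$, i.e. $\theta_t\ge\lambda_1$ and $\theta_t\le\lambda_n$-type control giving factor $\le 1-\lambda_1/\lambda_n$. Ritz values of $A$ lie in $[\lambda_1,\lambda_n]$, so $0\le 1-\lambda_1/\theta_t\le 1-\lambda_1/\lambda_n\le c$ holds directly, independent of reuse. If instead the paper's intended base case is the one-step version handled inside Lemma~\ref{lem:sweeping_l_plus_1} with $l=0$, we would invoke that argument in its place.
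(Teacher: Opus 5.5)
Your proof is correct and follows the paper's route. You pick $\varepsilon_1$ small enough that the linear recurrence \eqref{eq:eps_induction} gives $\varepsilon_n\le\tfrac12$ (your explicit choice also keeps every $\varepsilon_l\le\gamma/4$), apply Lemma~\ref{lem:induction_l_plus_1} for $l=1,\ldots,n-1$, and take $M=m_n$.

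Your base case is more careful than the paper's. You take $m_1=\lceil\log\varepsilon_1/(2\log c)\rceil$ and justify it by $\theta_t\in[\lambda_1,\lambda_n]$, so $|1-\lambda_1/\theta_t|\le 1-\lambda_1/\lambda_n\le c$ for every applied (possibly reused) step. The paper instead starts from $m_1=1$ without checking that $(d_1^{k+j})^2\le\varepsilon_1\|g^k\|_2^2$ for all $j\ge1$, and this need not hold (e.g.\ if $g^k$ is nearly parallel to $q_1$ while $\theta_k$ is well above $\lambda_1$).
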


\begin{proposition}[$R$-linear convergence]
\label{prop:r_linear_convergence}
Assume $\delta \geq 1$.  Let $\{x^k\}$ be the sequence generated by
Algorithm~\ref{alg:adaptive_hanoi_main} applied to
problem~\eqref{eq:intro-quadratic}.  Then $\{x^k\}$ converges
$R$-linearly to the unique minimizer $x^*$: there exist constants
$C > 0$ and $\rho \in (0,1)$, depending only on $A$, $\gamma$, $m$,
and $\kappa_{\mathrm{cont}}$, such that
\[
    \|x^k - x^*\|_2 \leq C\,\rho^k,
    \qquad \forall\, k \geq 0.
\]
\end{proposition}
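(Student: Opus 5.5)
The plan is to derive the proposition from Lemma~\ref{lem:global_halving}, with Lemma~\ref{lem:single_step_bound} covering the finite warm-up segment and the gaps between the anchor points where the halving bound is applied. Throughout we use $g^k = A(x^k - x^*)$. This gives
\[
    \|x^k - x^*\|_2 \leq \lambda_1^{-1}\|g^k\|_2 ,
\]
so it suffices to prove $R$-linear decay of $\|g^k\|_2$ and transfer it to the iterates at the cost of the constant factor $\lambda_1^{-1}$.

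\emph{First step: the warm-up phase $0\le k\le m$.} These iterations use the Cauchy step $\alpha_0$. Lemma~\ref{lem:single_step_bound} applies to every step, so $\|g^k\|_2\le \delta^k\|g^0\|_2\le \delta^m\|g^0\|_2$ for $k\le m$. Alternatively, one can note that $\alpha_0\le 1/\lambda_1$ gives $|1-\alpha_0\lambda_i|\le \max\{1,\lambda_n/\lambda_1-1\}=\delta$ directly.

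\emph{Second step: blocks of length $M$.} Let $M$ be the integer from Lemma~\ref{lem:global_halving}. For $k\ge m$, write $k=m+qM+j$ with $0\le j<M$. Applying the lemma repeatedly at the anchors $m, m+M, \ldots, m+(q-1)M$ gives
\[
    \|g^{m+qM}\|_2^2\le 2^{-q}\|g^m\|_2^2 .
\]
The remaining $j<M$ steps cost at most a factor $\delta^{M}$ by Lemma~\ref{lem:single_step_bound}. Alternatively, apply Lemma~\ref{lem:global_halving} with anchor $m+(q-1)M$ and offset $M+j\ge M$; this gives the bound with no $\delta^M$ factor at all. Combining,
\[
    \|g^k\|_2\le \delta^{M+m}\,2^{-q/2}\|g^0\|_2 .
\]
Since $q\ge (k-m)/M-1$, this becomes $\|g^k\|_2\le C'\rho^k\|g^0\|_2$ with
\[
    \rho=2^{-1/(2M)}, \qquad C'=\delta^{M+m}\,2^{1/2}\,2^{m/(2M)} .
\]
The same bound covers $k<m$ after enlarging $C'$ by the factor $\rho^{-m}$.

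\emph{Third step: the constants.} We set $C=\lambda_1^{-1}C'\|g^0\|_2$. The proposition asks that $C,\rho$ depend only on $A,\gamma,m,\kappa_{\mathrm{cont}}$, and in fact $\rho$ depends only on $M$. Lemma~\ref{lem:global_halving} says $M$ depends only on $n,\delta,\gamma,c,\tilde m$. Here $\delta$ and $c$ come from the spectrum of $A$ and from $\gamma$ through \eqref{eq:sc_def}, and $\tilde m=m+\kappa_{\mathrm{cont}}-1$. The factor $\|g^0\|_2$ in $C$ is the usual initial-data dependence in an $R$-linear bound; the statement is read with $x^0$ fixed.

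\emph{Main obstacle.} We need Lemma~\ref{lem:global_halving} to hold \emph{uniformly} over all anchors $k\ge m$, including anchors that fall in the middle of a reuse phase of length $\ell_k$. This is what lets us chain the halving at the arbitrary anchors $m+pM$, and that uniformity is exactly the content of the backward-tracing argument in Lemma~\ref{lem:sweeping_l_plus_1}. Two further points need checking. Early termination at $\|g^k\|_2\le\epsilon$ makes the bound trivially true. The case $\lambda_n/\lambda_1<2$ is handled by replacing $\delta$ with $1$ as stated.
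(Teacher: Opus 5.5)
Your proposal is correct and follows the paper's proof. Both arguments rest on Lemma~\ref{lem:global_halving}, the norm equivalence $\lambda_1\|x^k-x^*\|_2\le\|g^k\|_2$, and the blocking argument of Dai et al.\ giving $\rho=2^{-1/(2M)}$; you write that blocking out explicitly, using Lemma~\ref{lem:single_step_bound} for the warm-up and for the leftover $j<M$ steps, and you correctly note that $C$ must carry a factor $\|g^0\|_2$, a dependence on $x^0$ that the paper's ``for some constant $C>0$'' leaves implicit.
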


\section{Numerical Experiments}
\label{sec:numerical-experiments}

We compare two variants of the proposed new algorithm \textbf{HanoiKT}, which uses the Kato--Temple estimator for Ritz
uncertainty, and \textbf{HanoiLMSD}, which uses the long/short LMSD
discrepancy estimator against five representative spectral gradient
methods: the cyclic gradient method NY of Xie et
al.~\cite{xie2026new}, the special HDL step-size of Huang et
al.~\cite{huang2021equipping}, the modified LMSD method MLMSD of Gu
and Du~\cite{gu2021modified}, L-BFGS, and the standard BB1 spectral
gradient method defined in~\eqref{eq:intro-sd-bb-weighted-average}. To isolate the effect of step-size selection, all compared methods are run without line search globalization. This avoids additional globalization parameters and ensures that the numerical comparison reflects differences in the step-size mechanisms themselves.
For completeness we briefly recall the parameters of each competing
method as used in our experiments.

\begin{itemize}
    \item \textbf{NY}~\cite{xie2026new}: a cyclic gradient method
          with cycle length $T = 7$.  The first two steps of each
          cycle use the steepest-descent step-size $\alpha^{\mathrm{SD}}$
          defined in~\eqref{eq:intro-sd-bb-weighted-average}; the
          remaining five steps share a single step-size
          $\alpha^{\mathrm{NY}}$ obtained by minimizing the
          residual over a three-dimensional Krylov subspace via
          Cardano's formula.

    \item \textbf{HDL}~\cite{huang2021equipping}: the special
          HDL step-size introduced in
          Example~\ref{ex:special-Huang}.  At each step the method
          computes the two Barzilai--Borwein step-sizes
          $\alpha_k^{\mathrm{BB1}}$ and $\alpha_k^{\mathrm{BB2}}$
          and their finite differences
          $\delta_j = \alpha_{k-1}^{\mathrm{BB}j} -
          \alpha_k^{\mathrm{BB}j}$.  A new step-size
          $\alpha_k^{\mathrm{Huang}}$ is derived from a two-step
          quadratic termination condition; the final step-size
          switches between
          $\min(\alpha_{k-1}^{\mathrm{BB2}},
          \alpha_k^{\mathrm{BB2}}, \alpha_k^{\mathrm{Huang}})$ and
          $\alpha_k^{\mathrm{BB1}}$ according to whether
          $\alpha_k^{\mathrm{BB2}}/\alpha_k^{\mathrm{BB1}} < \tau$,
          with threshold $\tau = 0.5$.

    \item \textbf{MLMSD}~\cite{gu2021modified}: memory size $m$,
          reference gradient index $\mathrm{ind} = m$ as suggested
          (Ritz vectors ranked by overlap with $g^{k-m}$), and
          admissibility threshold defined
          in~\eqref{eq:admissibility-threshold}.
          We test $m \in \{2, 3\}$.

    \item \textbf{L-BFGS}: standard limited-memory BFGS with unit initial Hessian scaling and memory $m \in \{2, 3\}$.
          Each step applies the two-loop recursion to form the
          search direction, followed by a unit step-size (no line
          search), matching the gradient-only evaluation budget of
          the other methods. This baseline follows \cite{fletcher2012limited}.
\end{itemize}

Both Hanoi variants use memory $m \in \{2, 3\}$ and the
admissibility threshold $\omega_k$
of~\eqref{eq:admissibility-threshold}.
The two variants differ only in how Ritz
uncertainty is quantified according to~\eqref{eq:delta-theta-summary}.

  Additional numerical experiments for determining the optimal values of all six parameters are provided in the
\hyperref[hanoi2026supp]{Supplementary Information}.
 We use exponents $r_\theta = 2$ and $r_t = 1$, and rebound-prevention power $r = 2$ throughout. For HanoiKT we set $\mathrm{ind}_r = m$, $\mathrm{ind}_s = m+1$ and $\kappa_{\mathrm{cont}} = 2$, while for HanoiLMSD we set $\mathrm{ind}_r = \mathrm{ind}_s = m$ and $\kappa_{\mathrm{cont}} = 3$. 

\medskip
\noindent\textit{Per-iteration complexity.}
Table~\ref{tab:complexity} summarizes the dominant arithmetic
overhead per iteration for each method, beyond the shared
matrix--vector product $Ag^k$ and the $O(n)$ inner products common
to all methods.

\begin{table}[t]
\caption{Arithmetic overhead per iteration beyond the shared
         matrix--vector product $Ag^k$ and $O(n)$ inner products.
         NY is equivalent to the scalar-moment LMSD with $m = 3$
         (Remark~\ref{rem:yuan-special-case}); its $O(m^3)$ Hankel
         solve reduces to $O(1)$ for fixed $m = 3$.
         HanoiKT and HanoiLMSD costs amortize to
         $O(m^3/\ell_k)$ per step when $\ell_k > 1$.}
\label{tab:complexity}
\centering
\begin{tabular}{lc}
\toprule
\textbf{Method} & \textbf{Additional cost per iteration} \\
\midrule
BB1                        & $O(1)$    \\
NY                         & $O(1)$    \\
HDL                      & $O(1)$    \\
MLMSD (standard)           & $O(nm^2)$ \\
MLMSD (scalar-moment)      & $O(m^3)$  \\
HanoiKT                    & $O(m^3)$  \\
HanoiLMSD                  & $O(m^3)$  \\
L-BFGS                     & $O(mn)$   \\
\bottomrule
\end{tabular}
\end{table}

For $m \leq 3$ and $n = 1000$ the ordering
$O(m^3) \ll O(mn) \leq O(nm^2) \ll O(n^2)$
holds strictly in the dense setting, so all scalar-moment overhead
is negligible relative to the matrix--vector product.  In the sparse
setting with $\mathrm{nnz} = O(n)$, L-BFGS's $O(mn)$ term is no
longer dominated by the matrix--vector product, whereas the $O(m^3)$
cost of the scalar-moment methods remains negligible across all
sparsity regimes.

\medskip
\noindent\textit{Test problems.}
Following Dai and Yuan~\cite{dai2003alternate}, 
each problem instance is generated as
\begin{equation}
\label{eq:test-problem-generation}
    A = Q D Q^\top,
    \quad
    Q = (I - 2w_1 w_1^\top)(I - 2w_2 w_2^\top)(I - 2w_3 w_3^\top),
\end{equation}
where $w_1, w_2, w_3 \in \mathbb{R}^n$ are independent unit vectors
drawn uniformly from the sphere, $D = \operatorname{diag}(d_1,
\ldots, d_n)$, and $b_i \sim \mathrm{Uniform}[-10, 10]$.
The initial point is $x^0 \sim \mathcal{N}(0, I)$.
All experiments use dimension $n = 1000$, condition number
$\mathrm{cond} = 10^6$, and $100$ independent random instances
per problem type.

The four test problems differ in how the eigenvalues
$d_1, \ldots, d_n$ are distributed.

\begin{description}
    \item[Problem~1 (Uniform)~\cite{de2014efficient,Gonzaga_2015,sun2020new,gu2021modified}.]
    $d_i \sim \mathrm{Uniform}[1,\, \mathrm{cond}]$ for
    $1 < i < n$.

    \item[Problem~2 (Marchenko--Pastur)~\cite{Marenko_1967,gu2021modified}.]
    Set $a = (1-c)^2$, $b = (1+c)^2$ with $c = \tfrac{1}{2}$.
    For $1 < i < n$,
    \[
        d_i = \frac{d_1 b - d_n a}{b - a}
              + \frac{d_n - d_1}{b - a}\,\xi_i,
        \qquad
        \xi_i \sim p_c(x)
        = \frac{\sqrt{(b-x)(x-a)}}{2\pi x c^2},
    \]
    which models the asymptotic eigenvalue distribution of sample
    covariance matrices.

    \item[Problem~3 (Two-block bimodal)~\cite{di2018steplength,sun2020new,gu2021modified}.]
    \[
        d_i \sim
        \begin{cases}
            \mathrm{Uniform}\bigl[1,\;
            1 + 0.2(\mathrm{cond}-1)\bigr]
            & 1 < i \leq \lfloor n/2 \rfloor, \\[4pt]
            \mathrm{Uniform}\bigl[1 + 0.8(\mathrm{cond}-1),\;
            \mathrm{cond}\bigr]
            & \lfloor n/2 \rfloor < i < n.
        \end{cases}
    \]
    Eigenvalues cluster in two separated groups near the extremes
    of the spectral interval.

    \item[Problem~4 (Two-block asymmetric)~\cite{gu2021modified}.]
    \[
        d_i \sim
        \begin{cases}
            \mathrm{Uniform}[1,\; 100]
            & 1 < i \leq \lfloor n/2 \rfloor, \\[4pt]
            \mathrm{Uniform}[\mathrm{cond}/2,\; \mathrm{cond}]
            & \lfloor n/2 \rfloor < i < n.
        \end{cases}
    \]
    The lower block is fixed while the upper block scales with
    $\mathrm{cond}$, creating an asymmetric spectral gap.
\end{description}

\medskip
\noindent\textit{Results.}
Performance is measured by the number of iterations to reach the
stopping criterion $\|g^k\|_2 / \|g^0\|_2 \leq \epsilon$, where $\epsilon=10^{-8}$.
An instance is declared failed if the limit of $50{,}000$
iterations is reached without convergence; failed instances are
assigned a penalty count of $500{,}000$, placing them beyond the
plotted range.
Figures~\ref{fig:pp-m2}--\ref{fig:pp-all} show the performance
profiles of Dolan and Mor\'{e}~\cite{dolan2002benchmarking}
aggregated over all $100$ instances per problem type.

\begin{figure}[!htbp]
    \centering
    \includegraphics[width=0.48\textwidth]{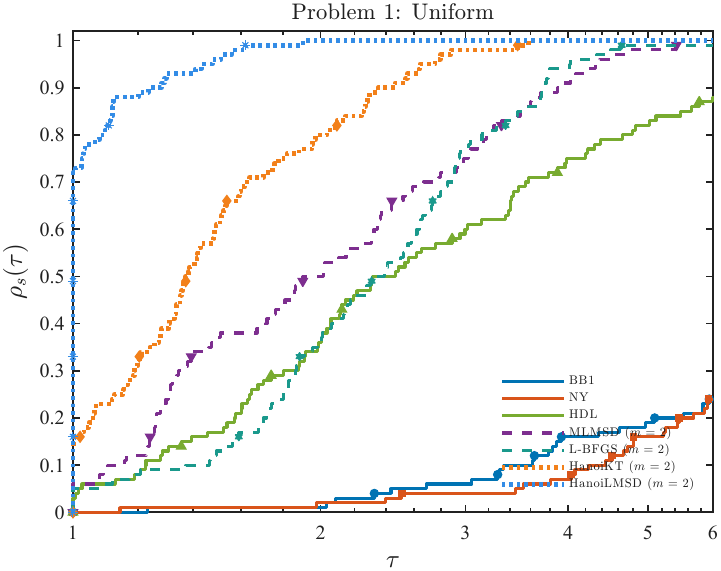}\hfill
    \includegraphics[width=0.48\textwidth]{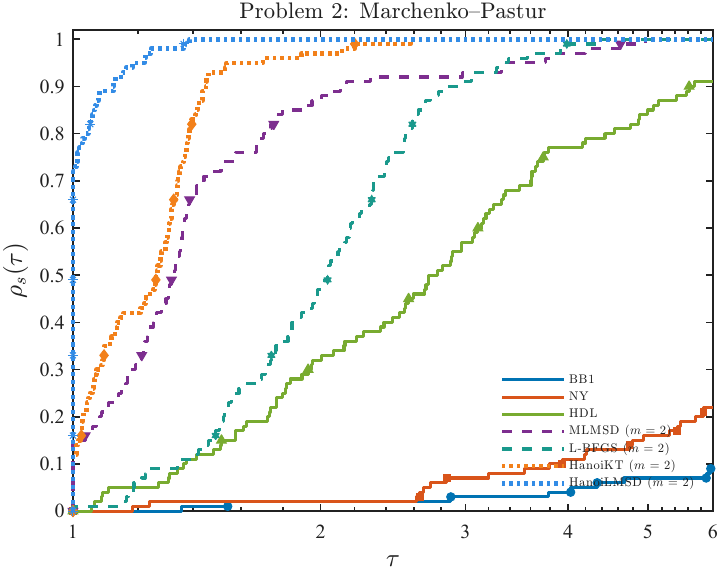}\\[4pt]
    \includegraphics[width=0.48\textwidth]{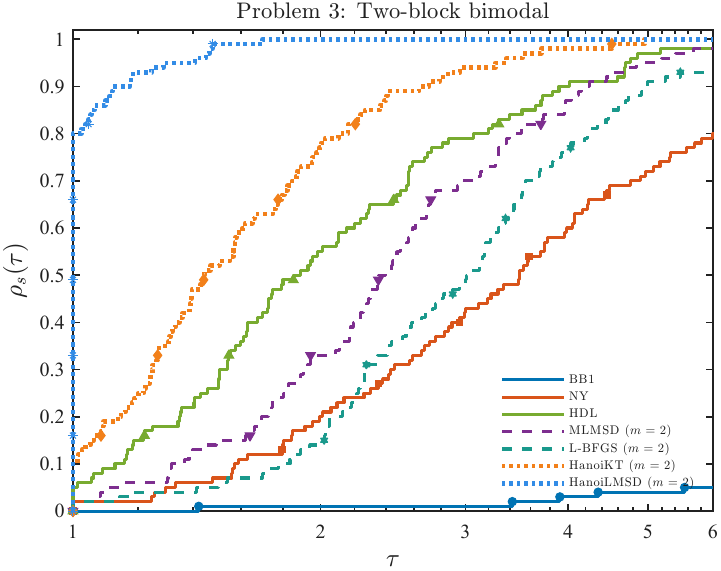}\hfill
    \includegraphics[width=0.48\textwidth]{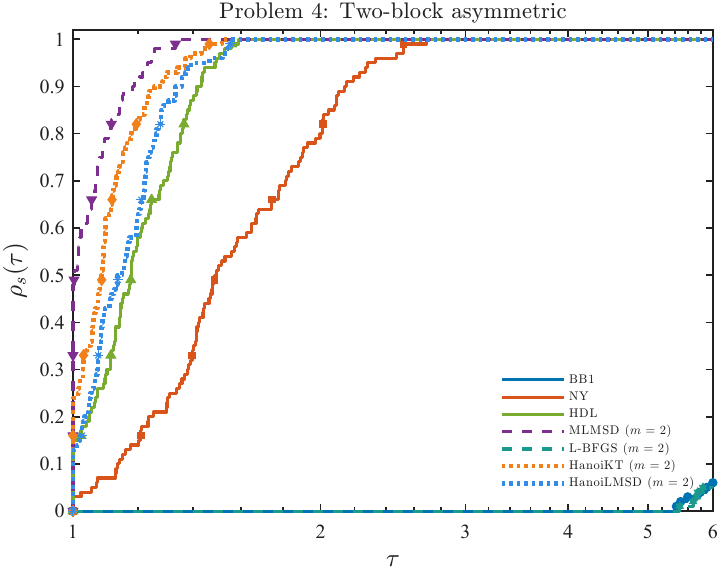}
    \caption{Performance profiles for Problems~1--4, $m = 2$.
             Methods compared: BB1, NY, HDL, MLMSD,
             HanoiKT, HanoiLMSD, and L-BFGS.
             Tolerance $\epsilon = 10^{-8}$; logarithmic $\tau$-axis.}
    \label{fig:pp-m2}
\end{figure}

\begin{figure}[!htbp]
    \centering
    \includegraphics[width=0.48\textwidth]{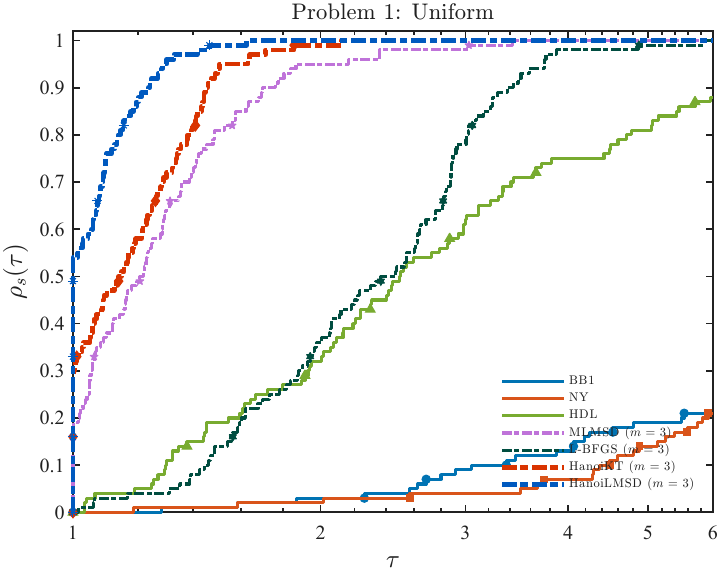}\hfill
    \includegraphics[width=0.48\textwidth]{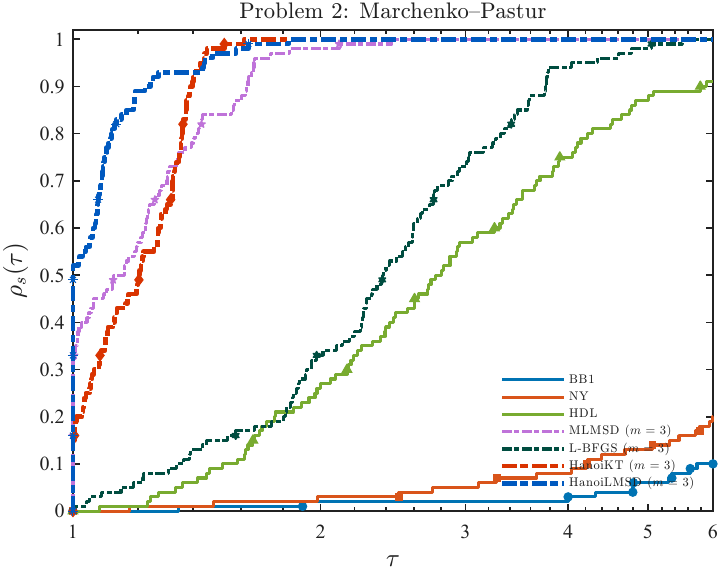}\\[4pt]
    \includegraphics[width=0.48\textwidth]{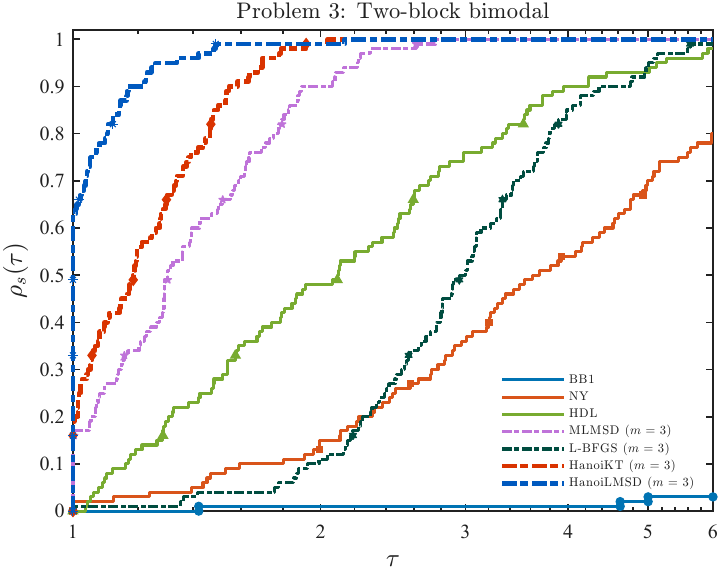}\hfill
    \includegraphics[width=0.48\textwidth]{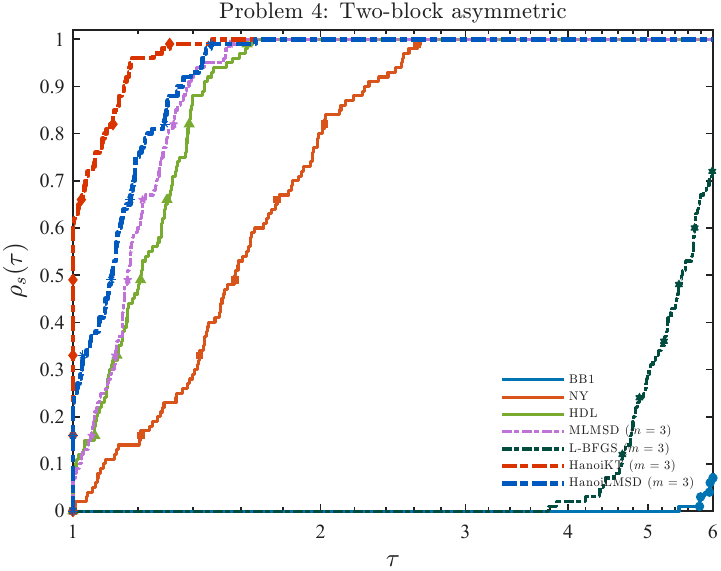}
    \caption{Performance profiles for Problems~1--4, $m = 3$
             (same setting as Figure~\ref{fig:pp-m2}).
             Methods compared: BB1, NY, HDL, MLMSD,
             HanoiKT, HanoiLMSD, and L-BFGS ($m = 3$).}
    \label{fig:pp-m3}
\end{figure}

\begin{figure}[!htbp]
    \centering
    \includegraphics[width=0.48\textwidth]{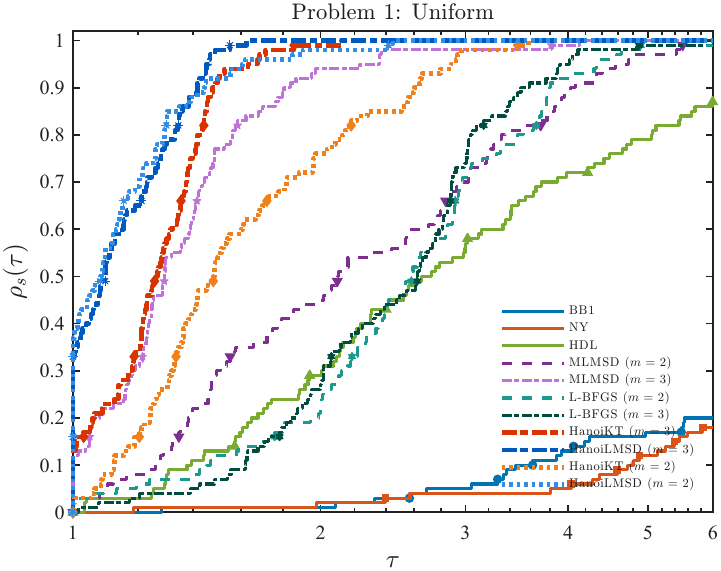}\hfill
    \includegraphics[width=0.48\textwidth]{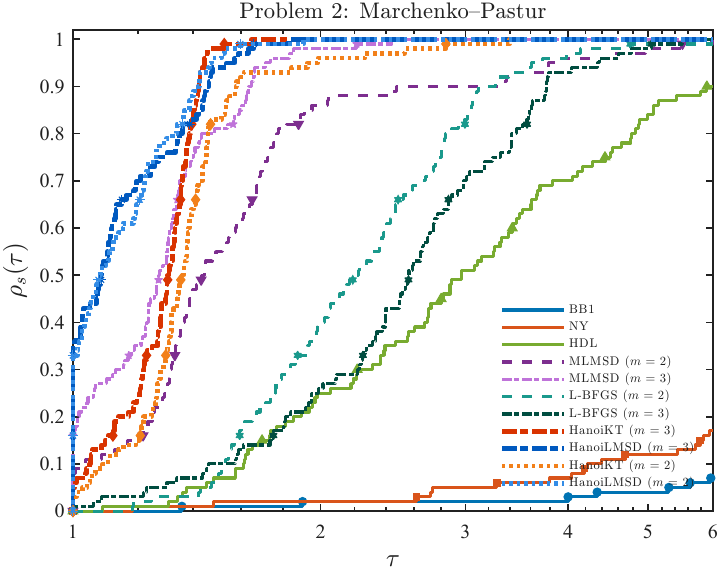}\\[4pt]
    \includegraphics[width=0.48\textwidth]{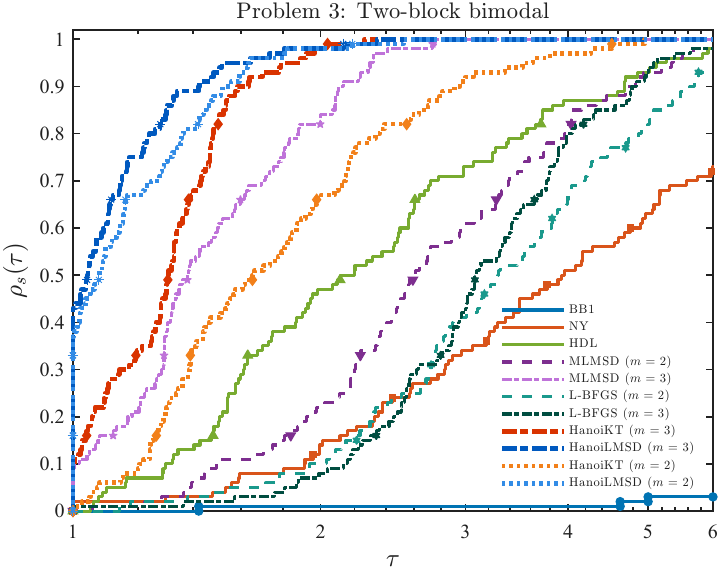}\hfill
    \includegraphics[width=0.48\textwidth]{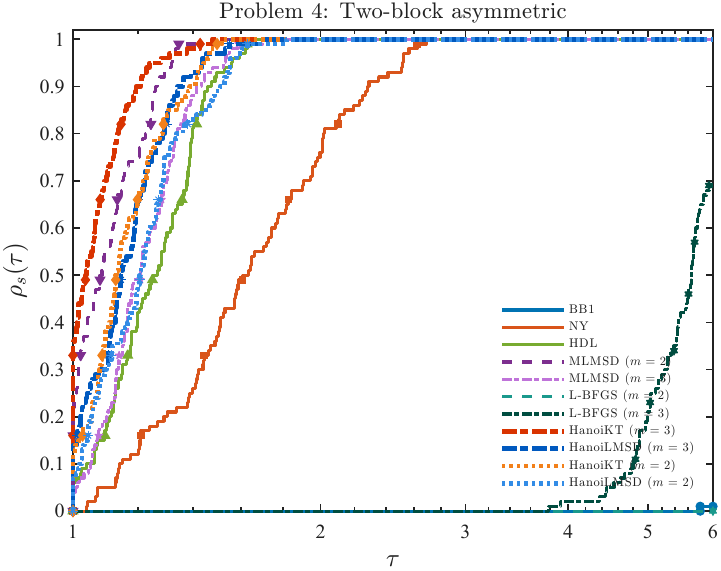}
    \caption{Performance profiles for Problems~1--4, all $11$ methods
             (same setting as Figure~\ref{fig:pp-m2}).}
    \label{fig:pp-all}
\end{figure}

At $m = 2$ (Figure~\ref{fig:pp-m2}), both Hanoi variants
consistently outperform BB1, NY, HDL, and L-BFGS across all four
spectral distributions.
Among the individual problems, HanoiLMSD ranks first and HanoiKT
ranks second on Problems~$1$--$3$.
On Problem~$4$, MLMSD achieves the highest profile value;
nevertheless, both Hanoi variants solve all instances within
$\tau \approx 1.5$ times the best solver, demonstrating
competitive efficiency on this distribution as well. At $m = 3$ (Figure~\ref{fig:pp-m3}), HanoiLMSD and HanoiKT again
rank first and second respectively on Problems~$1$--$3$.
On Problem~$4$, HanoiKT ranks first and HanoiLMSD ranks second,
with both variants surpassing MLMSD ($m=3$).
The combined view (Figure~\ref{fig:pp-all}) confirms that both
HanoiKT and HanoiLMSD are the overall top-performing methods, and
that their performance improves consistently as $m$ increases.

\section{Conclusion}
\label{sec:conclusion}

We studied spectral step-size selection for gradient methods on
strictly convex quadratic objectives. We established a structural
connection among HDL determinant pencils, finite-moment narrow
realizations, and generalized Gu--Du moment recovery, thereby
identifying a class of spectral step-size constructions that includes
the long, short, and mean LMSD variants.

Using rebound dynamics, we derived the Hanoi Ordering Principle, which
explains the need to control high-frequency components before
low-frequency steps and to revisit them after rebound. Based on this
principle, we proposed an adaptive pseudo-memory Hanoi-type method that
selects Long LMSD Ritz candidates by weighted component energy and
adapts phase lengths before refreshing the reduced model.

The method is proved to converge R-linearly on strictly convex
quadratics under the admissibility safeguard. Numerical results on four
spectral problem classes show that it is efficient and robust relative
to existing spectral gradient methods. Extending the ordering principle
to globally safeguarded methods and more general nonlinear objectives
remains an interesting direction.

\backmatter
\begin{appendices} 
\section{Pseudo-memory moment recovery and residual evaluation}
\label{app:pseudo-memory}

This appendix collects the implementation details for the
pseudo-memory representation introduced in
Section~\ref{subsec:moment-hankel-narrow}.  The moment window
$h_0^k, \ldots, h_{2m}^k$ is recovered entirely from scalar inner
products of consecutive gradients and the recent step-sizes,
without storing the gradient block $G^k$.  Only two vectors in
$\mathbb{R}^n$ are retained at any time: the previous gradient
$g^{k-1}$ and the current gradient $g^k$.

Let $w^k = g^{k-m}$ and $h_j^k = (w^k)^{\top} A^j w^k$ for
$j \geq 0$.  For $i = 0, \ldots, m$, write
$g^{k-m+i} = p_i^k(A)w^k$, where $p_0^k \equiv 1$ and
\[
    p_i^k(x) = \prod_{t=0}^{i-1}(1 - \alpha_{k-m+t}\, x).
\]
Every scalar product $(g^{k-m+i})^{\top} g^{k-m+j}$ equals
$w^{\top} p_i^k(A) p_j^k(A) w$, a polynomial moment of degree at
most $i + j$.  Stacking the $2m+1$ scalar products
\[
    q^k =
    \bigl(
        (g^{k-m})^{\top} g^{k-m},\;
        (g^{k-m+1})^{\top} g^{k-m},\;
        (g^{k-m+1})^{\top} g^{k-m+1},\;
        \ldots,\;
        (g^k)^{\top} g^k
    \bigr)^{\top}
\]
yields the linear system $q^k = S^k h^k$, where
$h^k = (h_0^k, \ldots, h_{2m}^k)^{\top}$ and
$S^k \in \mathbb{R}^{(2m+1)\times(2m+1)}$ is lower triangular with
positive diagonal entries.  The rows of $S^k$ are generated
recursively: set $S_{1,1}^k = 1$ and $S_{1,j}^k = 0$ for
$j \geq 2$, and for $i = 1, \ldots, m$,
\begin{align*}
    S_{2i,\,j}^k
    &= S_{2i-1,\,j}^k
       - \alpha_{k-m-1+i}\, S_{2i-1,\,j-1}^k,
    \quad j = 2, \ldots, 2i, \\
    S_{2i+1,\,j}^k
    &= S_{2i,\,j}^k
       - \alpha_{k-m-1+i}\, S_{2i,\,j-1}^k,
    \quad j = 2, \ldots, 2i+1.
\end{align*}
Provided $\alpha_{k-m}, \ldots, \alpha_{k-1} \neq 0$, the moments
are recovered by forward substitution:
\begin{equation}
\label{eq:app-forward-substitution}
    h_0^k = q_1^k,
    \qquad
    h_{r-1}^k
    = \frac{q_r^k - \displaystyle\sum_{\ell=0}^{r-2}
            S_{r,\ell+1}^k\, h_\ell^k}{S_{r,r}^k},
    \quad r = 2, \ldots, 2m+1.
\end{equation}
At the start of a new $m$-step cycle the anchor advances from
$w^k = g^{k-m}$ to $w^{k+1} = (I - \alpha_{k-m} A)w^k$, and the
new moments satisfy the \emph{moment refresh identity}
\begin{equation}
\label{eq:app-moment-refresh}
    h_j^{k+1}
    = h_j^k
      - 2\alpha_{k-m}\, h_{j+1}^k
      + \alpha_{k-m}^2\, h_{j+2}^k,
\end{equation}
which propagates the window using only scalar arithmetic.  Once
$h_0^k, \ldots, h_{2m}^k$ are available, the Hankel matrices
in~\eqref{eq:moment-Hankel-pair} are fully determined.  For a Ritz
pair $(\theta, y)$ satisfying $H_1^k y = \theta H_0^k y$, the
formal Ritz vector $s = K_m(w^k)y$ has residual norm
\begin{equation}
\label{eq:app-residual-formula}
    \|As - \theta s\|^2
    = y^{\top}\bigl(H_2^k - 2\theta H_1^k + \theta^2 H_0^k\bigr)y,
    \qquad
    H_2^k = \bigl[h_{r+s+2}^k\bigr]_{r,s=0}^{m-1},
\end{equation}
which requires moments only up to $h_{2m}^k$, the endpoint
of~\eqref{eq:app-forward-substitution}, at no additional vector cost.

\section{Derivations and Proofs for Section~\ref{sec:general-framework}}
\label{app:gudu-huang-proofs}

%% ==================================================================
\subsection{Yuan and NY step-sizes as Krylov--Ritz instances}
\label{subsec:app-yuan}

\noindent\textbf{Yuan step-size ($m=2$).}
In the $m = 2$ case, one SD step from $x^0$ gives
$x^1 = x^0 - \alpha_0^{\mathrm{SD}} g^0$, and the projected matrix
on $\operatorname{span}\{g^0, g^1\}$ is
\begin{equation}
\label{eq:yuan-projected-matrix}
    T = Q^\top A Q =
    \begin{pmatrix}
    \bigl(\alpha_0^{\mathrm{SD}}\bigr)^{-1}
    & -\|g^1\|_2 / \|s^0\|_2 \\[4pt]
    -\|g^1\|_2 / \|s^0\|_2
    & \bigl(\alpha_1^{\mathrm{SD}}\bigr)^{-1}
    \end{pmatrix},
    \qquad s^0 = x^1 - x^0,
\end{equation}
where $Q$ is an orthonormal basis of
$\operatorname{span}\{g^0, g^1\}$.  The larger Ritz value $\theta_2$
of $T$ satisfies $\alpha_1^{\mathrm{Y}} = \theta_2^{-1}$, recovering
the Yuan step-size~\cite{yuan2006new}.  When $n = 2$, the subsequent
SD step acts on the remaining Ritz component $\theta_1$, yielding
finite termination in three steps.

\noindent\textbf{NY step-size ($m=3$).}
In the $m = 3$ case, the projected matrix $T_k \in \mathbb{R}^{3\times 3}$
is formed on $\operatorname{span}\{g^{k-2}, g^{k-1}, g^k\}$.
We recall the construction of~\cite{xie2026new}.

Starting from two SD steps, one has $g^{k-2} \perp g^{k-1}$ and
$g^{k-1} \perp g^k$.  Applying Gram--Schmidt to
$\{g^{k-2}, g^{k-1}, g^k\}$ yields an orthonormal basis $Q$, and the
projected matrix
\[
    T = Q^\top A Q \in \mathbb{R}^{3\times 3}
\]
has the tridiagonal-like structure given in~\cite[eq.~(2.5)]{xie2026new}.
Its three eigenvalues $\mu_1 \geq \mu_2 \geq \mu_3 > 0$ are the roots
of the cubic
\[
    \mu^3 - k_1\mu^2 + k_2\mu - k_3 = 0,
\]
with coefficients $k_1, k_2, k_3$ expressed in terms of the two
preceding SD step-sizes and the projected quantities
$\beta, \gamma, a_{33}$; see~\cite[eq.~(2.11)]{xie2026new} for the
explicit formulas.  The three NY step-sizes are
\[
    \alpha^{\mathrm{NY}(i)} = \mu_i^{-1}, \qquad i = 1, 2, 3,
\]
and satisfy
$\alpha^{\mathrm{NY}(1)} \leq \alpha^{\mathrm{NY}(2)} \leq \alpha^{\mathrm{NY}(3)}$
by~\cite[Theorem~2.1]{xie2026new}.

In the Krylov--Ritz language of
Section~\ref{subsec:moment-hankel-narrow}, the eigenvalues
$\mu_i$ of $Q^\top A Q$ are precisely the Ritz values $\theta_i^k$ of
$T_k$, so the NY step-sizes are $(\theta_i^k)^{-1}$.  The largest
Ritz value $\theta_3^k$ corresponds to $\alpha^{\mathrm{NY}(3)}$,
the most aggressive step-size in the NY family.

In both cases, the Yuan and NY step-sizes are precisely the largest
Ritz values of the respective projected problems, and the
Krylov--Ritz framework of Section~\ref{subsec:moment-hankel-narrow}
subsumes them as special cases.

%% ==================================================================
\subsection{Proofs for Section~\ref{subsec:gudu-compatibility-rigidity}}

\begin{proof}[Proof of Proposition~\ref{prop:gudu-node-compatibility-symbol}]
By the exact moment representation,
$M_1 - \theta M_0 = \sum_{i=1}^m t_i(a_i - \theta)P(a_i)$.
Setting $\theta = a_s$ gives
$M_1 - a_s M_0 = \sum_{i \neq s} t_i(a_i - a_s)P(a_i)$,
which is singular if and only
if~\eqref{eq:node-compatibility-condition} holds.
\end{proof}

The proof of Proposition~\ref{prop:gudu-gen-equals-nar} relies on the
following lemma.

\begin{lemma}[Polynomial characterization of finite-moment filters]
\label{lem:shifted-finite-window-rigidity}
Let $\Omega\subset(0,\infty)$ be an interval, and let
$\Phi$ be a Laurent-type spectral filter well-defined on $\Omega$.
Suppose that, for every finite positive atomic measure $\mu$ supported
in $\Omega$, both functionals
\[
    \mu\longmapsto\int_\Omega\Phi(x)\,d\mu(x),
    \qquad
    \mu\longmapsto\int_\Omega x\Phi(x)\,d\mu(x)
\]
are represented by fixed linear combinations of the moments
$h_0,h_1,\ldots,h_{2m}$; that is, there exist coefficients
$c_0,\ldots,c_{2m}$ and $d_0,\ldots,d_{2m}$, independent of $\mu$, such
that \[
    \int_\Omega \Phi(x)\,d\mu(x)
    =
    \sum_{r=0}^{2m} c_r h_r,
    \qquad
    \int_\Omega x\Phi(x)\,d\mu(x)
    =
    \sum_{r=0}^{2m} d_r h_r .
\]
Then
\begin{equation}
\label{eq:finite-window-rigidity-conclusion}
    \Phi(x)\in\operatorname{span}\{1,x,\ldots,x^{2m-1}\}
    \qquad\text{on }\Omega.
\end{equation}
\end{lemma}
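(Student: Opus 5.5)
The plan is to test the two functional identities against Dirac masses. For each $x\in\Omega$ and each weight $t>0$, the measure $\mu=t\delta_x$ is a finite positive atomic measure supported in $\Omega$, and its moments are $h_r=tx^r$. Dividing the two identities by $t$ gives, pointwise on $\Omega$,
\[
    \Phi(x)=\sum_{r=0}^{2m}c_rx^r=:C(x),
    \qquad
    x\Phi(x)=\sum_{r=0}^{2m}d_rx^r=:D(x).
\]
So $\Phi$ coincides on $\Omega$ with a polynomial $C$ of degree at most $2m$. It remains to remove the top coefficient $c_{2m}$.

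For that, I combine the two identities: $xC(x)=D(x)$ on $\Omega$. Since $\Omega$ is an interval with infinitely many points, this is an identity of polynomials. The left side has degree-$(2m+1)$ coefficient $c_{2m}$, while $D$ has degree at most $2m$. Hence $c_{2m}=0$, and $\Phi=C\in\operatorname{span}\{1,x,\ldots,x^{2m-1}\}$ on $\Omega$, which is \eqref{eq:finite-window-rigidity-conclusion}. As a by-product, $d_0=0$ and $d_{r}=c_{r-1}$, so the two coefficient vectors are the same window shifted by one, matching the narrow representation \eqref{eq:narrow-realization-window}.

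The argument is essentially routine. The only point needing care is interpreting ``Laurent-type'' correctly: $\Phi$ is a rational function that is well defined on $\Omega$, so pointwise equality with a polynomial on $\Omega$ is meaningful. The polynomial identity must then be extended from $\Omega$ to all of $\mathbb R$ by the identity theorem for polynomials, which needs $\Omega$ to be nondegenerate, i.e.\ to contain more than $2m+1$ points. If $\Omega$ could degenerate to a single point, the conclusion holds trivially by choosing a constant polynomial; I would mention this in one line.
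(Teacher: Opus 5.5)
Your proposal is correct and follows essentially the same route as the paper. Both arguments test the identities on point masses to get $\Phi=C$ and $x\Phi=D$ on the infinite set $\Omega$, then use the polynomial identity $xC(x)=D(x)$ to eliminate $c_{2m}$. You make explicit the top-coefficient comparison that the paper compresses into a single ``hence''; your by-product $d_0=0$, $d_r=c_{r-1}$ is exactly the shifted structure that the subsequent narrowness argument relies on when it builds $\mathcal{T}$.
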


\begin{proof}[Proof of Lemma~\ref{lem:shifted-finite-window-rigidity}]
Applying the first identity to the point mass $\mu=\delta_{x_0}$ gives
$\Phi(x_0)=\sum_{r=0}^{2m}c_r x_0^r$.
 Since $h_j(\delta_{x_0})=x_0^j$, the assumption
gives $\Phi(x_0)=\sum_{r=0}^{2m}c_r x_0^r$ for some coefficients
$c_0,\ldots,c_{2m}$ independent of $x_0$. As $\Omega$ is infinite,
this polynomial identity holds on all of $\Omega$, so
$\Phi\in\operatorname{span}\{1,x,\ldots,x^{2m}\}$.
Applying the same argument to the second functional gives
$x\Phi(x)\in\operatorname{span}\{1,x,\ldots,x^{2m}\}$, hence
$\Phi\in\operatorname{span}\{1,x,\ldots,x^{2m-1}\}$ on $\Omega$.
\end{proof}

\begin{proof}[Proof of Proposition~\ref{prop:gudu-gen-equals-nar}]
The inclusion
$\mathcal{GD}_m \cap \mathcal{HDL}_{\mathrm{nar}} \subseteq
 \mathcal{GD}_m \cap \mathcal{HDL}_{\mathrm{gen}}$
is immediate. Conversely, let
$G \in \mathcal{GD}_m \cap \mathcal{HDL}_{\mathrm{gen}}$.
Since $G \in \mathcal{HDL}_{\mathrm{gen}}$, the pencil entries satisfy
\[
    (M_0^k)_{pq} = (g^{\nu_p(k)})^\top \psi_{pq}(A)\, g^k,
    \qquad
    (M_1^k)_{pq} = (g^{\nu_p(k)})^\top A\,\psi_{pq}(A)\, g^k,
\]
and since $G \in \mathcal{GD}_m$, both $M_0^k$ and $M_1^k$ are
determined by the window $h_0,\ldots,h_{2m}$. Hence the functionals
$\mu\mapsto\int\psi_{pq}\,d\mu$ and
$\mu\mapsto\int x\psi_{pq}\,d\mu$, evaluated on the spectral measure
of $A$ weighted by $g^k$, are determined by $h_0,\ldots,h_{2m}$.
Lemma~\ref{lem:shifted-finite-window-rigidity} then gives
$\psi_{pq} \in \operatorname{span}\{1, x, \ldots, x^{2m-1}\}$.
Writing $\psi_{pq}(x) = \sum_{r=0}^{2m-1} b_{pqr} x^r$ and defining
$\mathcal{T}$ entrywise by
$(\mathcal{T}(y))_{pq} = \sum_r b_{pqr} y_r$ gives
$M_0 = \mathcal{T}(h_0, \ldots, h_{2m-1})$ and
$M_1 = \mathcal{T}(h_1, \ldots, h_{2m})$,
so $G \in \mathcal{GD}_m \cap \mathcal{HDL}_{\mathrm{nar}}$.
\end{proof}

\begin{proof}[Proof of
Proposition~\ref{prop:psd-universal-recovery-rank-one}]
Fix pairwise distinct $x_1, \ldots, x_{m-1} > 0$ with positive
weights $t_1, \ldots, t_{m-1}$. For any $\lambda > 0$ distinct from
all $x_i$, the recovery condition at $\lambda$ gives
$\det\!\bigl(\sum_{i=1}^{m-1} t_i(x_i - \lambda)P(x_i)\bigr) = 0$.
This polynomial in $\lambda$ vanishes for infinitely many values,
hence identically; taking the leading coefficient yields
\begin{equation}
\label{eq:psd-rankone-reduction-property}
    \det\!\Bigl(\sum_{i=1}^{m-1} t_i P(x_i)\Bigr) = 0.
\end{equation}
Now set
$Q(t_1,\ldots,t_m) = \det\!\bigl(\sum_{i=1}^m t_i P(x_i)\bigr)$
for pairwise distinct $x_1, \ldots, x_m > 0$. This homogeneous
polynomial of degree $m$ vanishes on every coordinate face
$\{t_j = 0\}$ by~\eqref{eq:psd-rankone-reduction-property}, so
$t_j \mid Q$ for every $j$. Since $\deg Q = m$,
\begin{equation}
\label{eq:psd-rankone-det-factorization}
    Q(t_1, \ldots, t_m) = c\, t_1 \cdots t_m, \qquad c > 0,
\end{equation}
where $c > 0$ follows from the nonsingularity assumption with all
weights equal to one. To determine $\operatorname{rank} P(x_0)$ for
arbitrary $x_0 > 0$, apply~\eqref{eq:psd-rankone-det-factorization}
to the $m$ points $x_0, x_1, \ldots, x_{m-1}$ with weights
$t_0, 1, \ldots, 1$:
\[
    \det\!\Bigl(t_0 P(x_0)
    + \textstyle\sum_{i=1}^{m-1} P(x_i)\Bigr)
    = c\, t_0, \qquad c > 0.
\]
Setting $s = t_0 - 1$, $A_0 = P(x_0)$, and
$B = P(x_0) + \sum_{i=1}^{m-1} P(x_i)$ gives
$\det(B + sA_0) = c(s+1)$, a polynomial of degree one in $s$.
Since $B \succ 0$ and $A_0 \succeq 0$, the identity
$\deg_s \det(B + sA_0) = \operatorname{rank} A_0$
(which follows from
$\det(B+sA_0) = \det(B)\prod_\ell(1+s\lambda_\ell)$
over the positive eigenvalues of $B^{-1/2}A_0B^{-1/2}$)
gives $\operatorname{rank} P(x_0) = 1$.
\end{proof}

\begin{proof}[Proof of Proposition~\ref{prop:psd-nar-genone-in-gudu}]
Write $q(x)=\sum_{\ell=0}^{r}c_\ell x^\ell$ with $c_r\neq 0$, so
$\deg q=r$.  Let
$$
    k:=\max_{1\leq i\leq m}\deg u_i .
$$
Since the construction is narrow, the entries of $P$ and $xP$ are
represented by a shifted finite moment window of length $2m+1$.  In
particular, the polynomial entries of $xP=xq(x)u(x)u(x)^\top$ have
degree at most $2m$, and hence
$$
    r+2k+1\leq 2m.
$$
Equivalently,
\begin{equation}
\label{eq:k-def-narrow}
    k
    \leq
    \left\lfloor\frac{2m-1-r}{2}\right\rfloor .
\end{equation}

By~\eqref{eq:k-def-narrow}, the products $u_pu_q$ and $xu_pu_q$
satisfy
$$
    \deg(u_pu_q)\leq 2k,
    \qquad
    \deg(xu_pu_q)\leq 2k+1.
$$
Hence
$$
    \{u_pu_q,\,xu_pu_q:1\leq p,q\leq m\}
    \subseteq \mathcal{P}_{2k+1}.
$$
Let $\{f_1,\ldots,f_N\}$ be a maximal linearly independent subfamily
of these products.  Then
\begin{equation}
\label{eq:N-bound}
    N\leq \dim\mathcal{P}_{2k+1}=2k+2\leq 2m.
\end{equation}
Write each $f_j$ in the monomial basis via a coefficient matrix
$Y\in\mathbb{R}^{N\times(2k+2)}$:
$$
    f_j(x)=Y_j(1,x,\ldots,x^{2k+1})^\top,
    \qquad j=1,\ldots,N.
$$
Let $S^\ell$ denote the right-shift operator on coefficient row
vectors by $\ell$ places, with zero-padding to length $2m+1$. Define
the coefficient matrix
$$
    \mathcal W:=\sum_{\ell=0}^{r}c_\ell S^\ell Y
    \in\mathbb{R}^{N\times(2m+1)}.
$$
Then the $j$-th row of $\mathcal W$ represents $q(x)f_j(x)$, since
$\deg(qf_j)\leq r+2k+1\leq 2m$. Because every entry of
$uu^\top$ and $xuu^\top$ is a linear combination of
$f_1,\ldots,f_N$, multiplying through by $q$ gives
$$
    \{qu_pu_q,\;xqu_pu_q:1\leq p,q\leq m\}
    \subseteq
    \operatorname{span}_{\mathbb{R}}
    \bigl\{\mathcal W_j(1,x,\ldots,x^{2m})^\top:1\leq j\leq N\bigr\}.
$$
By~\eqref{eq:N-bound}, at most $2m$ independent scalar moment
constraints are needed to represent all entries of $P$ and $xP$.

Since the moment model is exact and atomic,
$$
    M_1-a_sM_0=\sum_{i\neq s}t_i(a_i-a_s)P(a_i).
$$
As $\operatorname{rank}P(a_i)=1$ for every $a_i\in\Omega$, this is a
sum of at most $m-1$ rank-one matrices, so
$$
    \det(M_1-a_sM_0)=0.
$$
By Proposition~\ref{prop:gudu-node-compatibility-symbol}, every node
$a_s$ is a generalized eigenvalue of $M_1-\theta M_0$. Regularity
and distinctness of the $m$ nodes then force them to exhaust all
finite generalized eigenvalues.

The exact atomic model gives $h_j=\sum_{i=1}^m t_ia_i^j$ for all
$j\geq0$, so every selected moment residual vanishes on the recovered
nodes and weights.  Since the entries of the pencil are represented by
at most $2m$ independent finite moment constraints, these constraints
can be embedded into a Gu--Du selection rule.  Hence the pencil
defined by $P$ gives a construction in $\mathcal{GD}_m$.
\end{proof}

\section{Proofs for Section~\ref{sec:hanoi-principle}}
\label{app:hanoi-proofs}
%% ==================================================================
\subsection{Proof of Proposition~\ref{prop:hanoi-principle}}
\label{app:StandardHanoi}

\begin{proof}
We proceed by induction on $k$.  For $k=1$, the component $d_n$ is
initially unresolved by Assumption~1. Since each phase can settle only
one targeted component, at least one phase is necessary. One phase
targeting $\lambda_n$ clears $d_n$ by Assumption~3, so $N_1=1$.

Assume that $N_{k-1}=2^{k-1}-1$. Consider the task of clearing the
$k$ highest-frequency components
$d_{n-k+1},\ldots,d_n$. In any admissible schedule, before
$d_{n-k+1}$ can be finally settled, the higher-frequency components
$d_{n-k+2},\ldots,d_n$ must have been cleared; otherwise the schedule
has not reached a state in which the lowest-frequency component of this
subproblem can be settled with all higher-frequency interference removed.
By the induction hypothesis, this requires at least $N_{k-1}$ phases.

One additional phase is then necessary to target and clear
$d_{n-k+1}$. By Assumption~3, this phase raises
$|d_j|>\delta_j$ for all $j>n-k+1$, so the higher-frequency components
$d_{n-k+2},\ldots,d_n$ become unresolved again. Clearing them again
requires at least another $N_{k-1}$ phases. Hence
\[
    N_k \geq 2N_{k-1}+1.
\]

Conversely, this lower bound is attainable: first clear
$d_{n-k+2},\ldots,d_n$ using an optimal $(k-1)$-component schedule,
then target $\lambda_{n-k+1}$ once, and finally re-clear
$d_{n-k+2},\ldots,d_n$ using the same optimal $(k-1)$-component
schedule. Therefore
\[
    N_k = 2N_{k-1}+1.
\]
Together with $N_1=1$, this gives
\[
    N_k=2^k-1.
\]
\end{proof}

\subsection{Proof of Corollary~\ref{cor:block-hanoi}}
\label{app:BlockHanoi}

\begin{proof}
Treating each block as a single component in
Proposition~\ref{prop:hanoi-principle} gives $C_k = 2C_{k-1}+1$,
hence $C_K = 2^K-1$.  In the optimal Hanoi schedule, block $j$ is
targeted exactly $2^{j-1}$ times, so
\[
    T_n = r \cdot 2^0 + m\sum_{j=2}^{K}2^{j-1} = r + m(2^K-2).
\]
The schedule achieving $C_K$ phases attains this step count exactly,
so both bounds are tight.
\end{proof}

\subsection{Proof of Proposition~\ref{prop:practical-hanoi}}
\label{app:practicalHanoi}

\begin{proof}
For $0\leq k\leq m$, all $k$ components of the subproblem are
simultaneously represented in the memory-$m$ reduced model. Since each
phase targets one selected component, the $k$ components can be settled
in $k$ phases. Hence
\[
    f_m(k)=k,
    \qquad 0\leq k\leq m.
\]

Let $n>m$. At the start of an $n$-component subproblem, the
lowest-frequency component $\lambda_1$ lies $n-m$ spectral levels below
the lowest-frequency component currently available in the memory-$m$
block. Since one phase can advance the lowest-frequency accessible
boundary by at most one spectral level, no admissible schedule can make
$\lambda_1$ available before resolving a preparatory subproblem
containing $n-m$ components. By the definition of $f_m$, this requires
at least
\[
    f_m(n-m)
\]
phases.

Selecting the lowest-frequency component in the current block advances
the accessible boundary by one spectral level whenever such an advance
is possible. Thus the recursive ordering attains the preceding lower
bound and makes $\lambda_1$ available after an optimal
$(n-m)$-component preparatory schedule. One additional phase then
targets and settles $\lambda_1$.

It remains to settle the other $n-1$ components. Subsequent phases
target only components $\lambda_j$ with $j>1$. By
Assumption~3 of Proposition~\ref{prop:hanoi-principle}, a phase
targeting $\lambda_j$ can reactivate only components of larger index.
Hence, once $\lambda_1$ has been settled, it is not reactivated by any
later phase. The remaining components therefore form an
$(n-1)$-component memory-$m$ scheduling problem, which requires
$f_m(n-1)$ further phases. Consequently,
\[
    f_m(n)
    =
    f_m(n-m)+1+f_m(n-1),
    \qquad n>m.
\]

For $m=1$, the recurrence becomes
\[
    f_1(n)=2f_1(n-1)+1,
    \qquad
    f_1(1)=1.
\]
Therefore,
\[
    f_1(n)=2^n-1.
\]
Thus $\rho_1=2$, recovering the scalar Hanoi recurrence.

Now let $m\geq2$. The associated homogeneous recurrence has
characteristic polynomial
\[
    p_m(x)=x^m-x^{m-1}-1.
\]
Since
\[
    p_m(1)=-1<0,
    \qquad
    p_m(2)=2^{m-1}-1>0,
\]
and
\[
    p_m'(x)
    =
    x^{m-2}\bigl(mx-(m-1)\bigr)>0
    \qquad\text{for }x>1,
\]
the polynomial $p_m$ has a unique root
\[
    \rho_m\in(1,2).
\]
Equivalently,
\[
    \rho_m^m=\rho_m^{m-1}+1.
\]

The recurrence has nonnegative coefficients. Standard
Perron--Frobenius theory applied to its companion matrix therefore
yields
\[
    f_m(n)=\Theta(\rho_m^n).
\]

Finally, for $m\geq2$, the defining equation for $\rho_m$ is
equivalent to
\[
    \rho_m^{m-1}(\rho_m-1)=1.
\]
For fixed $\rho>1$, the left-hand side is strictly increasing in $m$,
whereas for fixed $m$ it is strictly increasing in $\rho$. Therefore,
\[
    \rho_{m+1}<\rho_m.
\]
If $\rho_m$ converged to a limit strictly larger than $1$, then
\[
    \rho_m^{m-1}(\rho_m-1)
\]
would diverge as $m\to\infty$, contradicting its equality to $1$.
Hence
\[
    \rho_m\downarrow1.
\]
\end{proof}

\section{Proofs for Section~\ref{sec:algorithm}}
\label{app:algorithm-proofs}
\subsection{Energy--residual bound}
\label{app:energy-residual}

\begin{proposition}
\label{prop:energy-residual}
Let $\mathcal{V}^k = \mathrm{span}(G^k)$ be the subspace spanned by
the gradient block $G^k = [g^{k-m},\ldots,g^{k-1}]$, let
$(\theta_i^k, v_i^k)$ be a Ritz pair extracted from $\mathcal{V}^k$
via the Rayleigh--Ritz procedure, and let
$\mathrm{ind} \in \{1,\ldots,m\}$.  Run the Lanczos process on
$\mathcal{V}^k$ with starting vector $g^{k-m+\mathrm{ind}-1}$,
producing an orthonormal basis $\tilde{V}_m^k$ with
$\tilde{v}_1^k = g^{k-m+\mathrm{ind}-1}/\|g^{k-m+\mathrm{ind}-1}\|_2$,
and let $\beta_{m,\,\mathrm{ind}}^k$ be the last subdiagonal
coefficient of the resulting tridiagonal matrix $\tilde{T}_m^k$.
Then
\begin{equation}
\label{eq:energy-residual-bound}
    \|r_i^k\|_2^2
    \leq \bigl(\beta_{m,\,\mathrm{ind}}^k\bigr)^2
    \!\left(1 - \frac{t_i^{k,\,\mathrm{ind}}}
                     {\|g^{k-m+\mathrm{ind}-1}\|_2^2}\right),
\end{equation}
where $t_i^{k,\,\mathrm{ind}}$ is defined
in~\eqref{eq:component-energy-general}.
\end{proposition}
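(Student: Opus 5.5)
The idea is to work entirely in the Lanczos basis $\tilde V_m^k$ generated from the starting vector $g^{k-m+\mathrm{ind}-1}$. The aim is to write the Ritz residual through the standard Lanczos relation and then interpret its last-coordinate factor via the first coordinate, which carries the component energy. Since $\mathcal V^k$ is a Krylov space (by \eqref{eq:G-Krylov-span-equivalence}), it is also the Krylov space generated by any of its gradients. In particular it is generated by $g^{k-m+\mathrm{ind}-1}=p_{\mathrm{ind}-1}^k(A)w$, under the usual nonsingularity condition. Hence the Lanczos process started at that vector produces an orthonormal basis of the same $\mathcal V^k$, and the Rayleigh--Ritz pairs are the same. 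We have the relation
\[
A\tilde V_m^k=\tilde V_m^k\tilde T_m^k+\beta_{m,\mathrm{ind}}^k\,\tilde v_{m+1}^k e_m^\top .
\]

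\emph{Step 1 (residual formula).} Write $v_i^k=\tilde V_m^k y_i$ with $\tilde T_m^k y_i=\theta_i^k y_i$ and $\|y_i\|=1$. This gives $r_i^k=\beta_{m,\mathrm{ind}}^k (e_m^\top y_i)\tilde v_{m+1}^k$, so
\[
\|r_i^k\|_2^2=(\beta_{m,\mathrm{ind}}^k)^2 (e_m^\top y_i)^2 .
\]

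\emph{Step 2 (energy identification).} Compute $t_i^{k,\mathrm{ind}}$ as the squared projection of $g^{k-m+\mathrm{ind}-1}$ onto $v_i^k$. By \eqref{eq:component-energy}, $t_i^{k,1}=((g^{k-m})^\top v_i^k)^2$. Because $g^{k-m+j}=(I-\alpha_{k-m+j-1}A)g^{k-m+j-1}$ and $v_i^k\in\mathcal V^k$ is a Ritz vector, we have
\[
(v_i^k)^\top A u=\theta_i^k (v_i^k)^\top u \quad\text{for } u\in\mathcal V^k ,
\]
by Galerkin orthogonality, since $Av_i^k-\theta_i^kv_i^k\perp\mathcal V^k$. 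Each multiplication by $(I-\alpha A)$ therefore multiplies the projection by $(1-\alpha\theta_i^k)$. This holds as long as the vector being multiplied lies in $\mathcal V^k$, which is true for $g^{k-m},\dots,g^{k-2}$, i.e.\ for $\mathrm{ind}\le m$. Inductively this reproduces \eqref{eq:component-energy-general}:
\[
t_i^{k,\mathrm{ind}}=((g^{k-m+\mathrm{ind}-1})^\top v_i^k)^2=\|g^{k-m+\mathrm{ind}-1}\|_2^2\,(e_1^\top y_i)^2 ,
\]
where the last equality holds because $\tilde v_1^k$ is the normalized starting vector.

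\emph{Step 3 (combine).} Using $\|y_i\|=1$ and $m\ge2$, we have $(e_m^\top y_i)^2\le 1-(e_1^\top y_i)^2$. Substituting Step 2 into Step 1 yields \eqref{eq:energy-residual-bound}. (If $m=1$ the residual is handled separately, or the statement is read with $m\ge2$ as in the paper.)

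The main obstacle is Step 2. One must justify that the Galerkin property transfers the factor $(1-\alpha\theta_i^k)$ exactly, and this requires each propagated gradient to stay within $\mathcal V^k$; that is why $\mathrm{ind}\le m$ is needed. One must also check that Lanczos from an interior gradient spans the full $\mathcal V^k$ without early breakdown, which follows from the nonsingularity assumption. The remaining steps are routine Lanczos algebra.
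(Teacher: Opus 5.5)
Your Steps 1--3 follow the paper's proof essentially line by line: the Lanczos relation, then $\|r_i^k\|_2^2=(\beta^k_{m,\mathrm{ind}})^2(e_m^\top y_i)^2$, then the first-coordinate energy identity, then the unit-norm bound. Two of your additions improve on the paper: the Galerkin derivation of $t_i^{k,\mathrm{ind}}=((g^{k-m+\mathrm{ind}-1})^\top v_i^k)^2$ from \eqref{eq:component-energy-general}, which the paper only asserts, and the $m\ge2$ caveat.

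The gap is in the sentence you treat as routine. $\mathcal V^k$ is \emph{not} the Krylov space generated by an arbitrary one of its gradients. Since $K_m(A,p(A)w)=p(A)K_m(A,w)$, we get $K_m(A,g^{k-m+\mathrm{ind}-1})=p^k_{\mathrm{ind}-1}(A)\,\mathcal V^k$, which is the gradient window shifted forward by $\mathrm{ind}-1$. This equals $\mathcal V^k$ only if $\mathcal V^k$ is invariant under $p^k_{\mathrm{ind}-1}(A)$. Concretely, take $A=\operatorname{diag}(1,2,3)$, $w=g^{k-2}=(1,1,1)^\top$, $m=2$, $\alpha_{k-2}=1/4$. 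Then $\mathcal V^k$ is the set of arithmetic progressions, and $g^{k-1}\propto(3,2,1)^\top$ lies in it, but $Ag^{k-1}\propto(3,4,3)^\top$ does not. So for $\mathrm{ind}\ge2$ the Lanczos run on $A$ from $g^{k-m+\mathrm{ind}-1}$ does not produce a basis of $\mathcal V^k$. Its Ritz pairs are therefore not the ones in the statement.

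This step is load-bearing, not cosmetic. In the same example with $\mathrm{ind}=2$:
\begin{itemize}
\item The Lanczos run from $g^{k-1}$ gives $(\beta^k_{2,2})^2=126/361\approx0.349$.
\item The smaller Ritz pair of $\mathcal V^k$ has $\|r_1^k\|_2^2=1/6$ and $1-t_1^{k,2}/\|g^{k-1}\|_2^2=\tfrac12-\tfrac{\sqrt6}{7}\approx0.150$.
\item So the right-hand side is about $0.052<1/6$, and the inequality fails.
\end{itemize}
Suppose instead you take an orthonormal basis of $\mathcal V^k$ whose first vector is the normalized $g^{k-m+\mathrm{ind}-1}$, tridiagonalizing the compression of $A$. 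Then $(I-\tilde V_m^k(\tilde V_m^k)^\top)A\tilde V_m^k$ is still rank one, of the form $\tilde v_{m+1}^k c^\top$, but $c$ is not parallel to $e_m$ (here $c_1\neq0$). So Step 1 fails in that reading too.

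The paper writes down the same three-term recurrence without justification, so its proof has the identical gap. Both arguments are sound only for $\mathrm{ind}=1$, where $w$ itself generates $\mathcal V^k$.

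Your Step 2 does rescue a correct variant. Let $u_1,\ldots,u_{m+1}$ be the Lanczos vectors of $A$ started at $w$, so that $\|r_i^k\|_2^2=(\beta^k_{m,1})^2(u_m^\top v_i^k)^2$. For $1\le\mathrm{ind}\le m-1$, the vector $g^{k-m+\mathrm{ind}-1}$ lies in $K_{m-1}(A,w)$ and is therefore orthogonal to $u_m$. Hence $(u_m^\top v_i^k)^2\le 1-t_i^{k,\mathrm{ind}}/\|g^{k-m+\mathrm{ind}-1}\|_2^2$.

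Note also that Step 2 itself is valid up to $\mathrm{ind}=m+1$, since the last factor multiplies $g^{k-1}\in\mathcal V^k$. The real obstacle is the Lanczos relation, not Step 2 as you suggest.
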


\begin{proof}
Since $\tilde{V}_m^k$ is an orthonormal basis for $\mathcal{V}^k$,
the Ritz pair $(\theta_i^k, v_i^k)$ is independent of the choice of
starting vector; write $v_i^k = \tilde{V}_m^k s_i^k$ with
$\|s_i^k\|_2 = 1$.  The three-term recurrence
$A\tilde{V}_m^k
 = \tilde{V}_m^k \tilde{T}_m^k
 + \beta_{m,\,\mathrm{ind}}^k\,\tilde{v}_{m+1}^k e_m^\top$
and the eigenpair condition
$\tilde{T}_m^k s_i^k = \theta_i^k s_i^k$ give
$r_i^k = \beta_{m,\,\mathrm{ind}}^k(e_m^\top s_i^k)\tilde{v}_{m+1}^k$,
hence
$\|r_i^k\|_2^2 = (\beta_{m,\,\mathrm{ind}}^k)^2(e_m^\top s_i^k)^2$.
Since $\tilde{v}_1^k = g^{k-m+\mathrm{ind}-1}/\|g^{k-m+\mathrm{ind}-1}\|_2$,
we have
$t_i^{k,\,\mathrm{ind}}
 = \|g^{k-m+\mathrm{ind}-1}\|_2^2\,(e_1^\top s_i^k)^2$.
The bound $\|s_i^k\|_2 = 1$ then gives
$(e_m^\top s_i^k)^2 \leq 1 - (e_1^\top s_i^k)^2
 = 1 - t_i^{k,\,\mathrm{ind}}/\|g^{k-m+\mathrm{ind}-1}\|_2^2$.
\end{proof}

%%==================================================================
\subsection{First-order perturbation estimator for $\Delta\theta_i^k$}
\label{app:perturbation-estimator}
%% ==================================================================

This appendix derives the first-order perturbation estimator
mentioned in Section~\ref{subsec:delta-theta} and explains why it is
retained as a theoretical reference only.

Let
\[
    h_\ell = (g^{k-m})^\top A^\ell g^{k-m},
    \qquad \ell = 0, 1, \ldots, 2m-1,
\]
be the moment sequence generated by the baseline gradient $g^{k-m}$.
In the ideal dominant spectral model these moments satisfy
\begin{equation}
\label{eq:ideal-moment-model}
    h_\ell = \sum_{i=1}^m (\theta_i^k)^\ell\, t_i^k.
\end{equation}
Let $\pi^{k-m}$ index the true eigenvalues approximated by the
current Ritz values, and write
\[
    \theta_i^k + \Delta\theta_i^k = \lambda_{\pi_i^{k-m}},
    \qquad
    t_i^k + \Delta t_i^k = (g^{k-m}_{\pi_i^{k-m}})^2.
\]
The corresponding ideal dominant moments satisfy
\[
    h_\ell + \Delta h_\ell
    = \sum_{i=1}^m
      (\theta_i^k + \Delta\theta_i^k)^\ell
      (t_i^k + \Delta t_i^k).
\]
Expanding to first order and neglecting higher-order terms gives
\begin{equation}
\label{eq:perturbation-firstorder}
    \sum_{i=1}^m \Bigl[
      (\theta_i^k)^\ell \Delta t_i^k
      + \ell(\theta_i^k)^{\ell-1} t_i^k \Delta\theta_i^k
    \Bigr] = \Delta h_\ell,
    \qquad \ell = 0, 1, \ldots, 2m-1.
\end{equation}

Collecting the $2m$ equations in \eqref{eq:perturbation-firstorder}
yields the linear system
\begin{equation}
\label{eq:perturbation-system}
    M\,\xi = \Delta h,
    \qquad
    \xi =
    \bigl(
    t_1^k \Delta\theta_1^k, \ldots, t_m^k \Delta\theta_m^k,\,
    \Delta t_1^k, \ldots, \Delta t_m^k
    \bigr)^\top,
\end{equation}
where $\Delta h = (\Delta h_0, \ldots, \Delta h_{2m-1})^\top$ and
the coefficient matrix is
\begin{equation}
\label{eq:vandermonde-matrix}
    M =
    \begin{pmatrix}
    0 & \cdots & 0
      & 1 & \cdots & 1 \\
    1 & \cdots & 1
      & \theta_1^k & \cdots & \theta_m^k \\
    2\theta_1^k & \cdots & 2\theta_m^k
      & (\theta_1^k)^2 & \cdots & (\theta_m^k)^2 \\
    \vdots & & \vdots & \vdots & & \vdots \\
    (2m{-}1)(\theta_1^k)^{2m-2} & \cdots & (2m{-}1)(\theta_m^k)^{2m-2}
      & (\theta_1^k)^{2m-1} & \cdots & (\theta_m^k)^{2m-1}
    \end{pmatrix}.
\end{equation}
This is a confluent Vandermonde matrix; it is invertible whenever the
Ritz values $\theta_1^k, \ldots, \theta_m^k$ are distinct.

The moment perturbations $\Delta h_\ell$ are approximated by
comparing the full moment sequence with a projected one:
\[
    \Delta h_\ell \approx \widetilde{h}_\ell - h_\ell,
\]
where $\widetilde{h}_\ell$ is computed from the projected gradients
$g^{j\prime} = Q_k^\top g^j$.  The same lower-triangular recovery
of Section~\ref{app:pseudo-memory} applies in the projected space,
so no additional matrix-vector products with $A$ are required.

Although theoretically well-motivated, direct inversion of
\eqref{eq:perturbation-system} is numerically unreliable in practice.
Confluent Vandermonde matrices are notoriously ill-conditioned: when
the Ritz values $\theta_1^k, \ldots, \theta_m^k$ cluster near
convergence,  the condition number of $M$ grows exponentially in $m$, making the solution of
\eqref{eq:perturbation-system} numerically meaningless. The
estimator is therefore retained only as a theoretical reference that
explains how moment perturbations propagate to Ritz errors; the
Kato--Temple estimator~\cite{parlett1998symmetric} and the long/short
LMSD estimator of Section~\ref{subsec:delta-theta} are used in the
experiments instead.

%% ==================================================================
\subsection{Proof of Proposition~\ref{prop:long-short-residual}}
\label{app:long-short-proof}

\begin{proof}
For readability, write
\[
    \theta_s=\theta_{i,\mathrm{short}}^k,\qquad
    \theta_l=\theta_{i,\mathrm{long}}^k,\qquad
    v_s=v_{i,\mathrm{short}}^k,\qquad
    v_l=v_{i,\mathrm{long}}^k .
\]
By \eqref{eq:long-short-ordering}, \(\theta_s\geq\theta_l\).
After replacing \(v_l\) by \(-v_l\) if necessary, we may assume
\[
    \delta_{v,i}=\|v_s-v_l\|_2 .
\]
Write \(v_s=v_l+e\), where \(\|e\|_2=\delta_{v,i}\).

For the short Ritz pair, the reciprocal Ritz value satisfies
\[
    \theta_s
    =
    \frac{v_s^\top A^2v_s}{v_s^\top Av_s}.
\]
Hence
\[
    v_s^\top A^2v_s
    =
    \theta_s v_s^\top Av_s.
\]
Therefore the short residual satisfies
\[
\begin{aligned}
    \|r_{i,\mathrm{short}}^k\|_2^2
    &=
    \|Av_s-\theta_s v_s\|_2^2                                      \\
    &=
    v_s^\top A^2v_s
    -
    2\theta_s v_s^\top Av_s
    +
    \theta_s^2                                                       \\
    &=
    \theta_s
    \left(
    \theta_s-v_s^\top Av_s
    \right).
\end{aligned}
\]
Since \(0<\theta_s\leq\lambda_n\), we get
\[
    \|r_{i,\mathrm{short}}^k\|_2^2
    \leq
    \lambda_n
    \left(
    \theta_s-v_s^\top Av_s
    \right).
\]

Next,
\[
    v_s^\top Av_s
    =
    (v_l+e)^\top A(v_l+e)
    =
    v_l^\top Av_l
    +
    2v_l^\top Ae
    +
    e^\top Ae .
\]
Since \(v_l^\top Av_l=\theta_l\), define
\[
    \delta_A
    =
    2v_l^\top Ae+e^\top Ae .
\]
Then
\[
    v_s^\top Av_s=\theta_l+\delta_A.
\]
Moreover,
\[
    |\delta_A|
    \leq
    \|A\|_2\|e\|_2(2+\|e\|_2)
    =
    \|A\|_2\delta_{v,i}(2+\delta_{v,i}).
\]
Thus
\[
\begin{aligned}
    \theta_s-v_s^\top Av_s
    &=
    \theta_s-\theta_l-\delta_A                                      \\
    &\leq
    \theta_s-\theta_l+|\delta_A|                                    \\
    &\leq
    \theta_s-\theta_l
    +
    \|A\|_2\delta_{v,i}(2+\delta_{v,i}).
\end{aligned}
\]
Combining the preceding inequalities yields
\[
    \|r_{i,\mathrm{short}}^k\|_2^2
    \leq
    \lambda_n
    \left(
    \theta_s-\theta_l
    +
    \|A\|_2\delta_{v,i}(2+\delta_{v,i})
    \right),
\]
which is \eqref{eq:long-short-bound}.
\end{proof}

\section{Proofs for Section~\ref{sec:convergence_analysis}}
\label{app:convergence-proofs}

\begin{proof}[Proof of Lemma~\ref{lem:single_step_bound}]
By construction of Algorithm~\ref{alg:adaptive_hanoi_main}, the
reciprocal step-size $\theta_k$ is either the initial Cauchy value
$\alpha_0^{-1} = (g^0)^\top Ag^0 / \|g^0\|_2^2$, or a Ritz value
of the projected matrix $T_k = Q_k^\top AQ_k$
from~\eqref{eq:projected-matrix}.  In both cases $\theta_k$ is a
Rayleigh quotient or Ritz value of the symmetric positive definite
matrix $A$.  By the min--max characterization of Ritz values,
\[
    \lambda_1 \leq \theta_k \leq \lambda_n,
    \qquad \forall\, k \geq 0.
\]
Together with $\lambda_1 \leq \lambda_i \leq \lambda_n$, this gives
\[
    1 - \frac{\lambda_n}{\lambda_1}
    \leq
    1 - \frac{\lambda_i}{\theta_k}
    \leq
    1 - \frac{\lambda_1}{\lambda_n}.
\]
Hence $|1-\lambda_i/\theta_k|\leq\delta$ for all $i$ and $k$.
Substituting this bound into~\eqref{eq:eigen_component_update} and
squaring yields $(d_i^{k+1})^2\leq\delta^2(d_i^k)^2$.  Summing over $i$
and taking square roots gives $\|g^{k+1}\|_2\leq\delta\|g^k\|_2$.
\end{proof}

\begin{proof}[Proof of Lemma~\ref{lem:sweeping_l_plus_1}]
Assume for contradiction that
\begin{equation}
\label{eq:contrary_assump}
    (d_{l+1}^{k+j})^2 > s\,\varepsilon_l\,\|g^k\|_2^2,
    \qquad \forall\, j \in [m_l,\, m_l + \Delta_l].
\end{equation}
Applying Lemma~\ref{lem:single_step_bound} repeatedly from iteration $k$
yields the crude upper bound
\begin{equation}
\label{eq:crude_upper}
    (d_{l+1}^{k+m_l+\tilde m})^2
    \leq \delta^{2(m_l+\tilde m)}\,\|g^k\|_2^2.
\end{equation}

Fix any $j\in[m_l,\,m_l+\Delta_l-\tilde m]$ and set
$t=k+j+\tilde m$.  Let $K\leq t$ be the most recent refresh index
satisfying $t-K\leq\kappa_{\mathrm{cont}}-1$, so that $\theta_t$
was computed at iteration $K$.  Since each history window has length
at most $m$, the anchor index $p:=K-\bar m_K$ with $\bar m_K\leq m$
satisfies
\[
    p \geq K-m \geq k+j \geq k+m_l,
    \qquad
    p \leq k+m_l+\Delta_l.
\]
Thus $p-k\in[m_l,\,m_l+\Delta_l]$.  Both the hypothesis
$D(p,l)\leq\varepsilon_l\|g^k\|_2^2$ and the contradictory
assumption~\eqref{eq:contrary_assump} therefore hold at $p$.

The admissibility threshold~\eqref{eq:admissibility-threshold} evaluated
at the refresh index $K$ with anchor $p$ gives
\[
    \omega_K
    =
    \frac{1+\gamma}{2}
    \frac{\sum_{i=1}^n \lambda_i(d_i^p)^2}
         {\sum_{i=1}^n(d_i^p)^2}.
\]
Since $D(p,l)\leq\varepsilon_l\|g^k\|_2^2$ and
$(d_{l+1}^p)^2>s\,\varepsilon_l\|g^k\|_2^2$ by
\eqref{eq:contrary_assump}, it follows that
\[
    \omega_K
    \geq
    \frac{1+\gamma}{2}\lambda_{l+1}
    \frac{s\,\varepsilon_l\|g^k\|_2^2}
         {D(p,l)+\sum_{i=l+1}^n(d_i^p)^2}
    \geq
    \frac{1+\gamma}{2}\lambda_{l+1}\frac{s}{s+1}.
\]
The admissibility condition $\theta_t\geq\omega_K$ then implies
\[
    1-\frac{\lambda_{l+1}}{\theta_t}
    \geq
    1-\frac{2(s+1)}{(1+\gamma)s}
    =
    -\frac{1}{1+\gamma}.
\]
Combining this with the upper bound
$1-\lambda_{l+1}/\theta_t\leq 1-\lambda_1/\lambda_n$ and the definition
of $c$ in~\eqref{eq:sc_def} yields
\begin{equation}
\label{eq:contraction_coeff_bound}
    \left|1-\frac{\lambda_{l+1}}{\theta_t}\right|\leq c.
\end{equation}

From~\eqref{eq:eigen_component_update} and
\eqref{eq:contraction_coeff_bound},
\[
    (d_{l+1}^{t+1})^2 \leq c^2(d_{l+1}^{t})^2.
\]
Iterating this contraction over
$t\in[k+m_l+\tilde m,\,k+m_l+\Delta_l]$ and combining the result with
\eqref{eq:crude_upper} gives
\[
    (d_{l+1}^{k+m_l+\Delta_l+1})^2
    \leq
    c^{2(\Delta_l-\tilde m+1)}
    \delta^{2(m_l+\tilde m)}\|g^k\|_2^2.
\]
By the definition of $\Delta_l$, the right-hand side is at most
$s\,\varepsilon_l\|g^k\|_2^2$, contradicting~\eqref{eq:contrary_assump}
at $j=m_l+\Delta_l$.  Hence there exists
$j_0\in[m_l,\,m_l+\Delta_l+1]$ such that
\[
    (d_{l+1}^{k+j_0})^2
    \leq
    s\,\varepsilon_l\|g^k\|_2^2.
\]
\end{proof}

\begin{proof}[Proof of Lemma~\ref{lem:induction_l_plus_1}]
Let $j\geq m_{l+1}=m_l+\Delta_l+1$.  Applying
Lemma~\ref{lem:sweeping_l_plus_1} to the window
$[j-(\Delta_l+1),\,j]$ yields an index
$r\in[j-(\Delta_l+1),\,j]$ such that
\[
    (d_{l+1}^{k+r})^2
    \leq
    s\,\varepsilon_l\|g^k\|_2^2.
\]
Choose the largest such $r$.

\medskip
\noindent\textit{Case 1: $j-r\leq\tilde m+1$.}
Applying Lemma~\ref{lem:single_step_bound} $j-r$ times gives
\[
    (d_{l+1}^{k+j})^2
    \leq
    \delta^{2(\tilde m+1)}(d_{l+1}^{k+r})^2
    \leq
    s\,\delta^{2(\tilde m+1)}
    \varepsilon_l\|g^k\|_2^2.
\]

\medskip
\noindent\textit{Case 2: $j-r>\tilde m+1$.}
For any $t\in[k+r+\tilde m+1,\,k+j-1]$, let $K$ be the refresh
index with $t-K\leq\kappa_{\mathrm{cont}}-1$ and set
$p=K-\bar m_K$.  Then $p\geq k+r+1$ and
$p-k\in[r+1,\,j-1]$.  The maximality of $r$ gives
\[
    (d_{l+1}^{p})^2
    >
    s\,\varepsilon_l\|g^k\|_2^2,
\]
while the induction hypothesis gives
$D(p,l)\leq\varepsilon_l\|g^k\|_2^2$.  Repeating the admissibility
argument from the proof of Lemma~\ref{lem:sweeping_l_plus_1} yields
$|1-\lambda_{l+1}/\theta_t|\leq c$.  Iterating the resulting
contraction from $t=k+r+\tilde m+1$ to $t=k+j-1$ gives
\[
    (d_{l+1}^{k+j})^2
    \leq
    s\,\delta^{2(\tilde m+1)}
    \varepsilon_l\|g^k\|_2^2.
\]

In both cases,
\[
    (d_{l+1}^{k+j})^2
    \leq
    s\,\delta^{2(\tilde m+1)}
    \varepsilon_l\|g^k\|_2^2.
\]
Therefore
\[
\begin{aligned}
    D(k+j,l+1)
    &=
    D(k+j,l)+(d_{l+1}^{k+j})^2        \\
    &\leq
    \bigl(1+s\,\delta^{2(\tilde m+1)}\bigr)
    \varepsilon_l\|g^k\|_2^2
    =
    \varepsilon_{l+1}\|g^k\|_2^2 .
\end{aligned}
\]
\end{proof}

\begin{proof}[Proof of Lemma~\ref{lem:global_halving}]
Choose $\varepsilon_1\in(0,\gamma/4]$ small enough that the sequence
generated by the induction recurrence satisfies $\varepsilon_n\leq1/2$.
This is possible because $\varepsilon_n$ depends linearly on
$\varepsilon_1$ once $n,\delta,\gamma,c,$ and $\tilde m$ are fixed.

Apply Lemma~\ref{lem:induction_l_plus_1} inductively for
$l=1,\ldots,n-1$, starting from this $\varepsilon_1$ and $m_1=1$.
Each step produces $\varepsilon_{l+1}$ and $m_{l+1}$ via
the recurrence for $\varepsilon_{l+1}$ and $m_{l+1}$.
After $n-1$ steps,
\[
    D(k+j,n)
    =
    \|g^{k+j}\|_2^2
    \leq
    \varepsilon_n\|g^k\|_2^2
    \leq
    \frac12\|g^k\|_2^2,
    \qquad \forall\, j\geq m_n.
\]
Taking $M:=m_n$ completes the proof.  The integer $M$ depends only on
$n,\delta,\gamma,c,$ and $\tilde m$.
\end{proof}

\begin{proof}[Proof of Proposition~\ref{prop:r_linear_convergence}]
Lemma~\ref{lem:global_halving} gives
\[
    \|g^{k+M}\|_2^2
    \leq
    \frac12\|g^k\|_2^2,
    \qquad \forall\, k\geq m.
\]
Since $g^k=A(x^k-x^*)$ and $A\succ0$,
\[
    \lambda_1\|x^k-x^*\|_2
    \leq
    \|g^k\|_2
    \leq
    \lambda_n\|x^k-x^*\|_2 .
\]
Thus the gradient norm and the error norm are equivalent.  Applying the
standard blocking argument of Dai et al.~\cite{dai2002r} yields
\[
    \|x^k-x^*\|_2
    \leq
    C\rho^k,
    \qquad
    \rho=2^{-1/(2M)}\in(0,1),
\]
for some constant $C>0$.
\end{proof}

\noindent\textbf{Acknowledgements}
 This work was supported by the National Key R\&D Program of China grant No. 2022YFA1003800, the National Natural Science Foundation of China No. 12201318 and the Natural Science Foundation of Tianjin No. 25JCJQJC00300.

\bmhead{Supplementary Information}\label{hanoi2026supp}
Additional numerical experiments for determining the optimal values of all six parameters are provided in the supplementary document.

\bmhead{Declarations}
The authors declare that they have no conflict of interest.

\bmhead{Availability of data and materials}
All data and codes analyzed during this study are publicly available in the GitHub repository: \url{https://github.com/yijiazhoucollab/Hanoi}.
\end{appendices}
\bibliography{refoptimization}
\clearpage

\begin{center}
{\Large\bfseries Supplementary Information\par}

\vspace{0.5em}

{\large\bfseries Structured Spectral Step-Sizes and Hanoi-Type Ordering for Gradient Methods \par}

\vspace{1.5em}

{\large Yijia Zhou\textsuperscript{1} and Ran Gu\textsuperscript{1,*}\par}

\vspace{0.8em}

{\small
\textsuperscript{1}NITFID, School of Statistics and Data Science, LPMC,
KLMDASR, and AAIS, Nankai University, Tianjin, China\par
}

\vspace{0.5em}

{\small
\textsuperscript{*}Corresponding author: rgu@nankai.edu.cn\par
}
\end{center}

\vspace{2em}

\setcounter{section}{0}
\renewcommand{\thesection}{S\arabic{section}}
\renewcommand{\thetable}{S\arabic{table}}
\renewcommand{\thefigure}{S\arabic{figure}}
\setcounter{table}{0}
\setcounter{figure}{0}

\section{Parameter Tuning Procedure}

To calibrate the algorithmic parameters of our method,
we conducted a grid search over 216 candidate configurations with
memory size fixed at $m = 3$.
The search space is defined by five parameters.

\begin{table}[h]
\centering
\caption{Grid-search ranges for the five tuning parameters with $m=3$ fixed.
The grid search was run independently for HanoiKT and
HanoiLMSD, yielding $3 \times 3 \times 3 \times 2 \times 4
= 216$ configurations per method.}   % <-- caption 放在这里
\label{tab:S-grid-search}
\renewcommand{\arraystretch}{1.15}
\begin{tabular}{lc r}
\toprule
\textbf{Parameter} & \textbf{Candidates} & \textbf{Cardinality} \\
\midrule
Score index $\mathrm{ind}_r$ & $1,\ m,\ m+1$ & 3 \\
Threshold index $\mathrm{ind}_s$ & $1,\ m,\ m+1$ & 3 \\
Rebound prevention exponent $p$ & $2,\ 3,\ 4$ & 3 \\
Continuation cap $\kappa_{\mathrm{cont}}$ & $2,\ 3$ & 2 \\
Scoring function $\eta$ & $t\theta,\ t\theta^{3/2},\ t\theta^2,\ t^{3/2}\theta$ & 4 \\
\bottomrule
\end{tabular}
\end{table}

Each of the 216 configurations was evaluated independently on all
four problem classes under the same setting as in Section~6.1 of the main text.

\section{Score-Function Selection and Performance Profiles}

To calibrate and select the final configurations of HanoiKT and HanoiLMSD,
we evaluate all 216 configurations of each method against the modified
limited-memory steepest descent (MLMSD) baseline~\cite{gu2021modified}
using a performance-profile AUC scoring scheme~\cite{dolan2002benchmarking}.
For each of the 400 test runs, corresponding to four problem classes and
100 trials per class, the performance ratio of solver $s$ on run $p$ is
\[
r_{p,s}
=
\frac{t_{p,s}}{\min_{s'}\, t_{p,s'}},
\]
where $t_{p,s}$ denotes the iteration count to convergence and the minimum
is taken over all solvers in the comparison group. Failed runs are assigned
$r_{p,s} = \tau_{\max}$.

The AUC score of configuration $s$ on problem class $q$ is defined as
\[
\mathrm{AUC}_q(s)
=
\frac{1}{\tau_{\max}-1}
\int_{1}^{\tau_{\max}}
\rho_s(\tau)\,\mathrm{d}\tau,
\qquad
\rho_s(\tau)
=
\frac{1}{n_p}\,\bigl|\{\,p : r_{p,s} \le \tau\,\}\bigr|,
\]
with $\tau_{\max}=6$. The overall score is the average across the four
problem classes,
\[
\mathrm{AUC}(s)
=
\frac{1}{4}\sum_{q=1}^{4}\mathrm{AUC}_q(s).
\]

HanoiKT and HanoiLMSD are evaluated separately: each group of 216
configurations is compared jointly with MLMSD, yielding 217 solvers per
group, so that the two method families do not affect each other's ranking.
Among configurations that improve over MLMSD on all four problem classes,
we rank them by the geometric mean of the per-problem AUC differences,
\[
\Delta_q
=
\mathrm{AUC}_q(\text{method}) - \mathrm{AUC}_q(\text{MLMSD}),
\qquad
\widetilde{\Delta}
=
\bigl(\Delta_1 \Delta_2 \Delta_3 \Delta_4\bigr)^{1/4}.
\]
This criterion directly measures the improvement over the baseline.

Based on these rankings, we select the top-ranked configuration of each
method as the final experimental setting. For HanoiKT, the selected setting is
$\eta = t\theta^2$ with $r_t = 1$ and $r_\theta = 2$,
$\mathrm{ind}_r = m$, $\mathrm{ind}_s = m+1$,
$p = 2$, and $\kappa_{\mathrm{cont}} = 2$. For HanoiLMSD, the selected
setting is $\eta = t\theta^2$, $\mathrm{ind}_r = \mathrm{ind}_s = m$,
$p = 2$, and $\kappa_{\mathrm{cont}} = 3$. The details are shown in~Table~\ref{tab:S-top5-both-diff}.

\begin{table}[ht]
\centering
\caption{Top 5 configurations for HanoiKT and HanoiLMSD, ranked by the
geometric mean of
$\Delta_q = \mathrm{AUC}_q(\text{method}) - \mathrm{AUC}_q(\text{MLMSD})$.
MLMSD baselines: KT: P1=0.8834, P2=0.9122, P3=0.8525, P4=0.9125;
LMSD: P1=0.8540, P2=0.8956, P3=0.8169, P4=0.9186.}
\label{tab:S-top5-both-diff}
\renewcommand{\arraystretch}{1.12}
\small   % <-- 添加这一行，全局缩小字号
\begin{tabular}{ccccccccccc}
\toprule
\textbf{Rank} & $\eta$ & $\mathrm{ind}_r$ & $\mathrm{ind}_s$
  & $p$ & $\kappa_{\mathrm{cont}}$
  & \textbf{$\Delta_1$} & \textbf{$\Delta_2$} & \textbf{$\Delta_3$} & \textbf{$\Delta_4$}
  & \textbf{$\widetilde{\Delta}$} \\
\midrule
KT-1 & $t\theta^2$     & $m$     & $m+1$ & 2 & 2 & 0.0280 & 0.0052 & 0.0470 & 0.0349 & 0.0221 \\
KT-2 & $t\theta^2$     & $m$     & $m+1$ & 3 & 2 & 0.0286 & 0.0024 & 0.0507 & 0.0243 & 0.0171 \\
KT-3 & $t\theta^2$     & $m$     & $m+1$ & 4 & 2 & 0.0288 & 0.0055 & 0.0413 & 0.0101 & 0.0161 \\
KT-4 & $t\theta$       & $m$     & $m+1$ & 3 & 2 & 0.0091 & 0.0072 & 0.0175 & 0.0281 & 0.0134 \\
KT-5 & $t\theta^2$     & $m+1$ & $m+1$ & 3 & 2 & 0.0061 & 0.0037 & 0.0165 & 0.0291 & 0.0102 \\
\midrule
LMSD-1 & $t\theta^2$     & $m$     & $m$     & 2 & 3 & 0.0604 & 0.0314 & 0.0974 & 0.0125 & 0.0390 \\
LMSD-2 & $t\theta^{3/2}$ & $m$     & $m$     & 2 & 3 & 0.0258 & 0.0278 & 0.0568 & 0.0404 & 0.0358 \\
LMSD-3 & $t\theta^2$     & $m+1$ & $m$     & 2 & 3 & 0.0795 & 0.0610 & 0.1213 & 0.0022 & 0.0338 \\
LMSD-4 & $t\theta^{3/2}$ & $m+1$ & $m$     & 2 & 3 & 0.0184 & 0.0570 & 0.0369 & 0.0197 & 0.0295 \\
LMSD-5 & $t\theta$       & $m$     & $m$     & 2 & 3 & 0.0370 & 0.0163 & 0.0354 & 0.0346 & 0.0293 \\
\bottomrule
\end{tabular}
\end{table}

\end{document}